\documentclass[]{amsart}

\makeatletter
\@addtoreset{equation}{section}

\makeatother

\newtheorem{theorem}{Theorem}[section]
\newtheorem{proposition}[theorem]{Proposition}

\newtheorem{lemma}[theorem]{Lemma}
\theoremstyle{definition}
\newtheorem{definition}[theorem]{Definition}

\theoremstyle{remark}
\newtheorem{remark}[theorem]{Remark}

\DeclareMathOperator{\cn}{cn}

\DeclareMathOperator{\sn}{sn}
\DeclareMathOperator{\am}{am}
\DeclareMathOperator{\sign}{sign}


\usepackage{braket} 
\usepackage{amssymb} 
\usepackage{mathtools}
\usepackage{color}
\usepackage[pdftex]{graphicx}
\usepackage{mathrsfs} 
\usepackage{url}
\usepackage{hyperref}
\usepackage[normalem]{ulem}
\usepackage{xcolor}

\begin{document}

\title[Classification and stability of penalized pinned elasticae]{Classification and stability of penalized pinned elasticae}

\author{Marius M\"uller}
\address[M.~M\"uller]{ Institute of mathematics, University of Augsburg, Universit\"atsstrasse 14, 86159 Augsburg, Germany
}
\email{marius1.mueller@uni-a.de}

\author{Kensuke Yoshizawa}
\address[K.~Yoshizawa]{Faculty of Education, Nagasaki University,
1-14 Bunkyo-machi, Nagasaki, 852-8521, Japan
}
\email{k-yoshizaw@nagasaki-u.ac.jp}

\keywords{Pinned elastica, bending energy, stability, elastic flow}
\subjclass[2020]{
Primary: 53A04, Secondary: 49Q10, 53E40} 

\date{\today}

\begin{abstract}
This paper considers critical points of the length-penalized elastic bending energy among planar curves whose endpoints are fixed. 
We classify all critical points with an explicit parametrization. The classification strongly depends on a special penalization parameter $\hat{\lambda}\simeq 0.70107$.
Stability of all the critical points is also investigated, and again the threshold $\hat{\lambda}$ plays a decisive role.
In addition, our explicit parametrization is applied to compare the energy of critical points, leading to uniqueness of minimal nontrivial critical points.
As an application we obtain eventual embeddedness of elastic flows.
\end{abstract}

\maketitle


\section{Introduction}

For an immersed planar curve $\gamma$, the bending energy (also known as the elastic energy) is defined by 
\begin{align*}
B[\gamma]:=\int_\gamma k^2\;\mathrm{d}s, 
\end{align*}
where $k$ and $s$ respectively denote the signed curvature and the arclength parameter of $\gamma$. 
A critical point of the bending energy under the length-constraint is called \emph{Euler's elastica}, and it is known as a model of an elastic rod (cf.\ \cite{APbook,LLbook,Love}).

In this paper we focus on the so-called \emph{modified (or length-penalized) bending energy} 
\[
\mathcal{E}_\lambda[\gamma]:=B[\gamma] +\lambda L[\gamma],
\]
where $\lambda>0$ is called a \emph{penalization parameter}, and $L[\gamma]=\int_\gamma \mathrm{d}s$, which is an object of interest in the recent literature (see e.g.\ \cite{AGP20,Born,LS_JDG,Maddocks1981,Miura20,MW_24X,Sachkov08,Schrader16,Ydcds} for studies of critical points,\ \cite{DP2014,Dia24,DKS2002,KM24,MPP21,OkabeNovaga2017} for gradient flows, and references therein).

In this paper we consider the critical points of $\mathcal{E}_\lambda$ under the so-called \emph{pinned boundary condition}, which prescribes the endpoints up to zeroth order. 
More precisely, given $\ell>0$, let
\begin{align}\label{eq:defA_ell}
    A_\ell:=\Set{ \gamma \in W^{2,2}_{\rm imm}(0,1;\mathbf{R}^2) | \gamma(0)=(0,0), \ \gamma(1)=(\ell,0)}, 
\end{align} 
where $W^{2,2}_{\rm imm}(0,1;\mathbf{R}^2)$ denotes the set of $W^{2,2}$-immersed curves, i.e., 
\[
W^{2,2}_{\rm imm}(0,1;\mathbf{R}^2):=\Set{\gamma\in W^{2,2}(0,1;\mathbf{R}^2) | \,|\gamma'(x)|\neq0 \ \ \text{for all} \ x\in[0,1] }.
\]
In this paper we call a critical point of $\mathcal{E}_\lambda$ in $A_\ell$ a \emph{penalized pinned elastica} (see Definition~\ref{def:PPE} below).

The role of the modified bending energy depends on the penalization parameter $\lambda$. The larger $\lambda $ is, the more dominant is the shortening role as opposed to the straightening effect.
Taking this property into account, we ask the question 
\begin{center}
    How does the penalization parameter $\lambda$ affect the critical points of $\mathcal{E}_\lambda$?
\end{center}

Inspired by this question, we will obtain various properties of penalized pinned elasticae, such as (i) complete classification, (ii) stability results, (iii) 
energy comparison, and (iv) consequences for the elastic flow. 

We first mention the complete classification of penalized pinned elasticae. 
To this end, we here introduce some functions involving the complete elliptic integrals: 
Let $f:[\frac{1}{\sqrt{2}}, 1)\to \mathbf{R}$ and $g:[\frac{1}{\sqrt{2}}, 1)\to \mathbf{R}$ be the functions defined by
\begin{align}
f(q)&:= (4q^4-5q^2+1)\mathrm{K}(q) + (-8q^4+8q^2-1)\mathrm{E}(q), \label{eq:def-f} 
\\
g(q)&:=8\big( 2\mathrm{E}(q)-\mathrm{K}(q) \big)^2 (2q^2-1), \label{eq:def-g} 
\end{align}
where $\mathrm{K}(q)$ and $\mathrm{E}(q)$ denote the complete elliptic integrals, which we introduce in Appendix~\ref{sect:elliptic_functions}.
Then, $f$ has a unique root $\hat{q}\simeq 0.79257$ (cf.\ Lemma~\ref{lem:property-f}) which is also a local maximum of $g$ with 
\begin{align}\label{eq:def-hat_lambda}
g(\hat{q})=:\hat{\lambda}\simeq 0.70107 
\end{align}
(see Lemma~\ref{lem:property-g}).
Following terminology in Definition~\ref{def:sarc_larc_loop}, we can classify penalized pinned elasticae as follows (see also Figure~\ref{fig:intro1}): 
\begin{theorem}[Classification for penalized pinned elasticae]\label{thm:classification-PPE}
Let $\lambda>0$, $\ell>0$, and  
\begin{align}\label{eq:n-lambda-ell}
    n_{\lambda,\ell}:= \left\lceil \sqrt{\tfrac{\lambda\ell^2}{\hat{\lambda}}} \, \right\rceil.
\end{align}
Suppose that $\gamma \in A_\ell$ is a critical point of $\mathcal{E}_\lambda$ in $A_\ell$. 
Then, either $\gamma$ is a trivial line segment or $\gamma$ is, up to reflection and reparametrization, represented by one of the following
\begin{itemize}
    \item[(i)] $(\lambda,\ell,n)$-shorter arc (denoted by $\gamma_{\rm sarc}^{\lambda,\ell,n}$) for some integer $n \geq n_{\lambda,\ell}$; 
    \item[(ii)] $(\lambda,\ell,n)$-longer arc (denoted by $\gamma_{\rm larc}^{\lambda,\ell,n}$)  for some integer $n \geq n_{\lambda,\ell}$; 
    \item[(iii)] $(\lambda,\ell,n)$-loop (denoted by $\gamma_{\rm loop}^{\lambda,\ell,n}$) for some integer $n \geq 1$. 
\end{itemize}
\end{theorem}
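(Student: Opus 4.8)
The plan is to run the classical elastica program adapted to the pinned setting. I would first compute $\frac{d}{dt}\mathcal{E}_\lambda[\gamma_t]|_{t=0}$ for variations $\gamma_t\in A_\ell$; since $A_\ell$ prescribes only $\gamma(0)$ and $\gamma(1)$, the admissible infinitesimal variations are $W^{2,2}$ vector fields vanishing at the endpoints but with free endpoint derivatives. A routine bootstrap shows that a critical point is smooth and may be reparametrized by arclength; writing $L=L[\gamma]$, $k$ for the signed curvature, $\mathbf{t},\mathbf{n}$ for the Frenet frame and $s$ for arclength, the Euler--Lagrange equation is $2k_{ss}+k^{3}-\lambda k=0$ on $(0,L)$ together with the natural (``pinned'') boundary conditions $k(0)=k(L)=0$, and the translation invariance of $\mathcal{E}_\lambda$ produces the conserved vector $\mathbf{F}:=2k_{s}\mathbf{n}+(k^{2}-\lambda)\mathbf{t}$ (one checks $\mathbf{F}'\equiv 0$ from the Frenet relations and the Euler--Lagrange equation). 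The profile $k\equiv 0$ is exactly the line segment, so from now on assume $k\not\equiv 0$.

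\textbf{Step 2: integrating the curvature ODE.} Multiplying by $k_{s}$ gives the first integral $k_{s}^{2}=\tilde E+\frac{\lambda}{2}k^{2}-\frac14 k^{4}$; at $s=0$ the condition $k(0)=0$ forces $\tilde E=k_{s}(0)^{2}$, and $\tilde E>0$ by uniqueness for the ODE (otherwise $k\equiv 0$). Hence the solution is oscillatory, and up to a sign change of $k$ (reflection) one has $k(s)=a\,\cn(ps+\mathrm{K}(q),q)$; inserting this into the first integral identifies $p^{2}=\frac{\lambda}{2(2q^{2}-1)}$ and $a^{2}=\frac{2\lambda q^{2}}{2q^{2}-1}$, which forces $q\in(\frac1{\sqrt2},1)$ (and every such $q$ occurs as $\tilde E$ runs over $(0,\infty)$, via the strictly monotone relation $\tilde E=\lambda^{2}q^{2}(1-q^{2})/(2q^{2}-1)^{2}$). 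The remaining condition $k(L)=0$ then forces $L=\frac{2n\,\mathrm{K}(q)}{p}$ for an integer $n\ge 1$, which is the index in the statement. Here one must also check that no branch is missed: non-oscillatory profiles ($\tilde E\le 0$) cannot satisfy $k(0)=k(L)=0$ on a bounded interval, and the extreme moduli $q=\frac1{\sqrt2}$ and $q\to 1$ are genuinely degenerate.

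\textbf{Step 3: the closing constraint.} Writing the constant vector as $\mathbf{F}=F(\cos\phi,\sin\phi)$ with $F=|\mathbf{F}|$, the relation $\mathbf{F}\cdot\mathbf{n}=2k_{s}$ together with $k(0)=k(L)=0$ shows that the displacement of $\gamma$ transverse to $\mathbf{F}$ equals $-\frac{2}{F}\int_{0}^{L}k_{s}\,ds=0$, while $\mathbf{F}\cdot\mathbf{t}=k^{2}-\lambda$ gives the displacement along $\mathbf{F}$ as $\frac1F\int_{0}^{L}(k^{2}-\lambda)\,ds=\frac1F\bigl(B[\gamma]-\lambda L\bigr)$. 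Hence, after the rotation aligning the chord with $\mathbf{F}$, the constraint $\gamma(1)-\gamma(0)=(\ell,0)$ reads $\ell=|B[\gamma]-\lambda L|/F$. Now $F^{2}=\lambda^{2}+4\tilde E$ is evaluated at $s=0$, and $L=\frac{2n\mathrm{K}(q)}{p}$ and $B[\gamma]=\int_{0}^{L}k^{2}\,ds$ are computed in closed form from standard complete-elliptic-integral identities (e.g.\ $\int_{0}^{\mathrm{K}}\cn^{2}=q^{-2}(\mathrm{E}-(1-q^{2})\mathrm{K})$); substituting the values of $a$ and $p$ and simplifying collapses $\ell F=|B[\gamma]-\lambda L|$, after squaring, to the single equation $g(q)=\lambda\ell^{2}/n^{2}$. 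Note that $\ell>0$ forces $q\neq q^{\ast}$, where $q^{\ast}\in(\hat q,1)$ is the unique zero of the strictly decreasing function $q\mapsto 2\mathrm{E}(q)-\mathrm{K}(q)$ (equivalently, the unique interior zero of $g$), which is consistent with $g(q)=\lambda\ell^{2}/n^{2}>0$.

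\textbf{Step 4: reading off the families.} Finally I would invoke Lemma~\ref{lem:property-g}: $g$ vanishes at $q=\frac1{\sqrt2}$, increases to its local maximum $g(\hat q)=\hat\lambda$ (with $\hat q$ the root of $f$, Lemma~\ref{lem:property-f}), decreases to $0$ at $q^{\ast}$, and then increases to $+\infty$ as $q\to 1$. Consequently $g(q)=\lambda\ell^{2}/n^{2}$ has a solution $q\in(\frac1{\sqrt2},\hat q)$ and a solution $q\in(\hat q,q^{\ast})$ if and only if $\lambda\ell^{2}/n^{2}\le\hat\lambda$, i.e.\ $n\ge n_{\lambda,\ell}$; using that $L$ is increasing in $q$, these produce respectively the shorter arc $\gamma_{\rm sarc}^{\lambda,\ell,n}$ and the longer arc $\gamma_{\rm larc}^{\lambda,\ell,n}$ of Definition~\ref{def:sarc_larc_loop}. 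On the branch $q\in(q^{\ast},1)$, where $g$ is unbounded, the equation has a solution for every value $\lambda\ell^{2}/n^{2}>0$, i.e.\ for every $n\ge 1$; this yields $\gamma_{\rm loop}^{\lambda,\ell,n}$. Identifying the resulting curvature $k$ --- hence $\gamma$ up to reflection and reparametrization --- with the explicit parametrizations of Definition~\ref{def:sarc_larc_loop} finishes the classification. I expect the computational core of Step 3 (pushing the elliptic-integral bookkeeping through to the clean identity $g(q)=\lambda\ell^{2}/n^{2}$) to be the main obstacle, together with the care in Steps 2 and 4 needed to confirm that no branch is lost and that the unboundedness of $g$ near $q=1$ is precisely what removes the constraint $n\ge n_{\lambda,\ell}$ for loops.
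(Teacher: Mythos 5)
Your proposal is correct and reaches the same classification by the same overall skeleton (Euler--Lagrange equation with natural boundary conditions $k(0)=k(L)=0$, wavelike $\cn$-profile with $q\in(\tfrac{1}{\sqrt2},1)$ forced by $\lambda>0$, quantization $L=2n\mathrm{K}(q)/\alpha$, closing constraint $\lambda\ell^2=n^2g(q)$, and then reading off the three branches from the graph of $g$). The genuine difference is in how you derive the closing constraint: the paper integrates the tangent vector explicitly, computing $X_{\sigma,\alpha,q}(s)$ and $Y_{\sigma,\alpha,q}(s)$ in closed form via the elliptic-integral identities \eqref{eq:int-cn}--\eqref{eq:int-sn(1-q^2sn^2)} and evaluating at $s=L$, whereas you exploit the translation-invariance first integral $\mathbf{F}=2k_s\mathbf{n}+(k^2-\lambda)\mathbf{t}$ to show that the chord is parallel to $\mathbf{F}$ (since $\int_0^L \mathbf{F}\cdot\mathbf{n}\,\mathrm{d}s=2(k(L)-k(0))=0$) with length $|B[\gamma]-\lambda L|/F$, and one checks $F=2\alpha^2$ so that this collapses to $\ell=\tfrac{2n}{\alpha}|2\mathrm{E}(q)-\mathrm{K}(q)|$, i.e.\ exactly \eqref{eq:scaling-a} and hence \eqref{eq:cond-q-penapinned}. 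Your route is computationally lighter and makes the geometric meaning of the constraint transparent (including why $q\neq q_*$ and why $F\geq\lambda>0$ removes any degeneracy), but it only determines the curve up to the rigid motion fixed by the chord; to produce the explicit parametrizations of Definition~\ref{def:sarc_larc_loop} --- which the theorem statement references and which the paper reuses heavily for the symmetry, self-intersection, and energy computations in Sections~\ref{sect:stability}--\ref{sect:elastic-flow} --- you still ultimately need the same elliptic-integral bookkeeping that the paper's proof carries out, so the saving is localized to the derivation of $g(q)=\lambda\ell^2/n^2$. All the delicate points (exclusion of non-oscillatory branches via $\tilde E=k_s(0)^2>0$, surjectivity of $q\mapsto\tilde E$, the role of the unboundedness of $g$ on $(q_*,1)$ in removing the constraint $n\geq n_{\lambda,\ell}$ for loops) are handled correctly.
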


\begin{figure}[htbp] 
    \centering
    \includegraphics[scale=0.22]{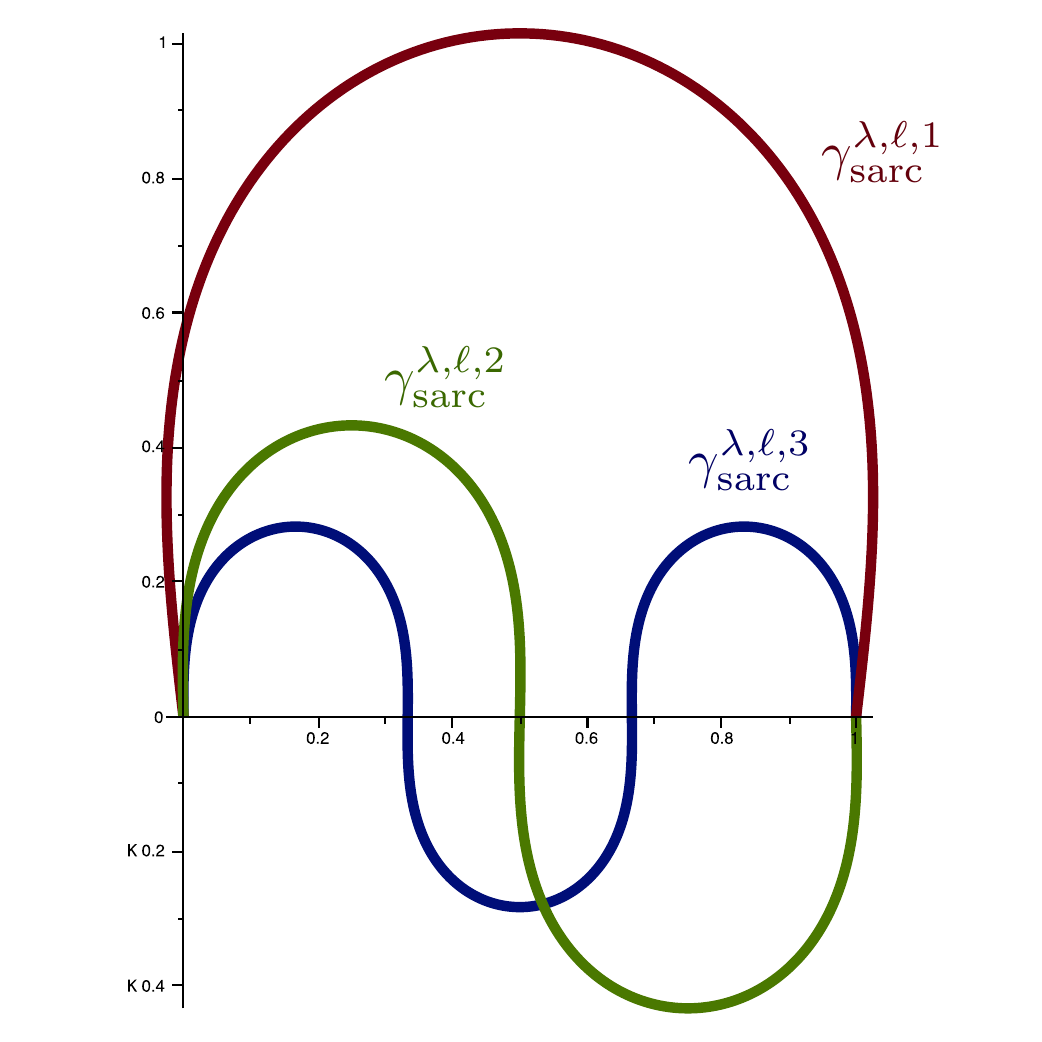}
    \includegraphics[scale=0.22]{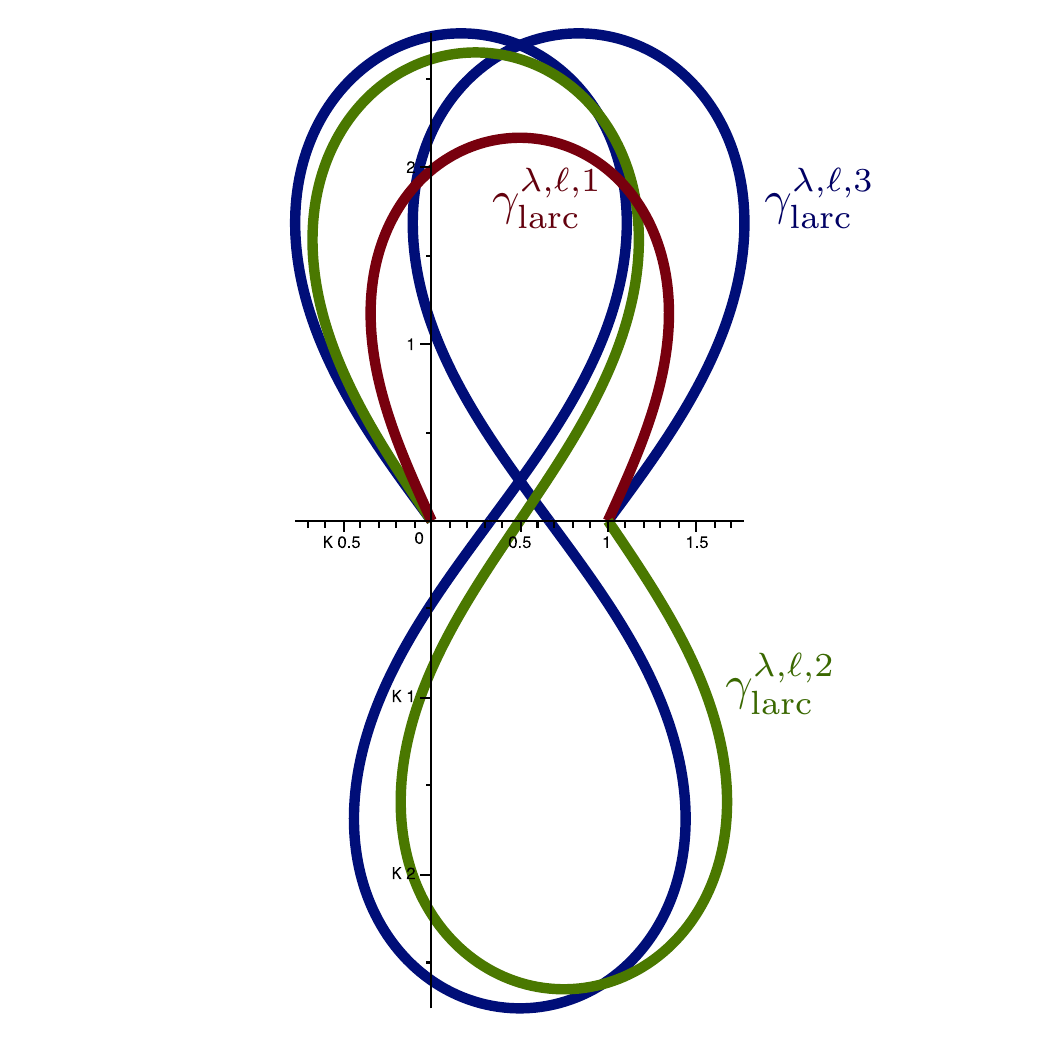}
    \includegraphics[scale=0.22]{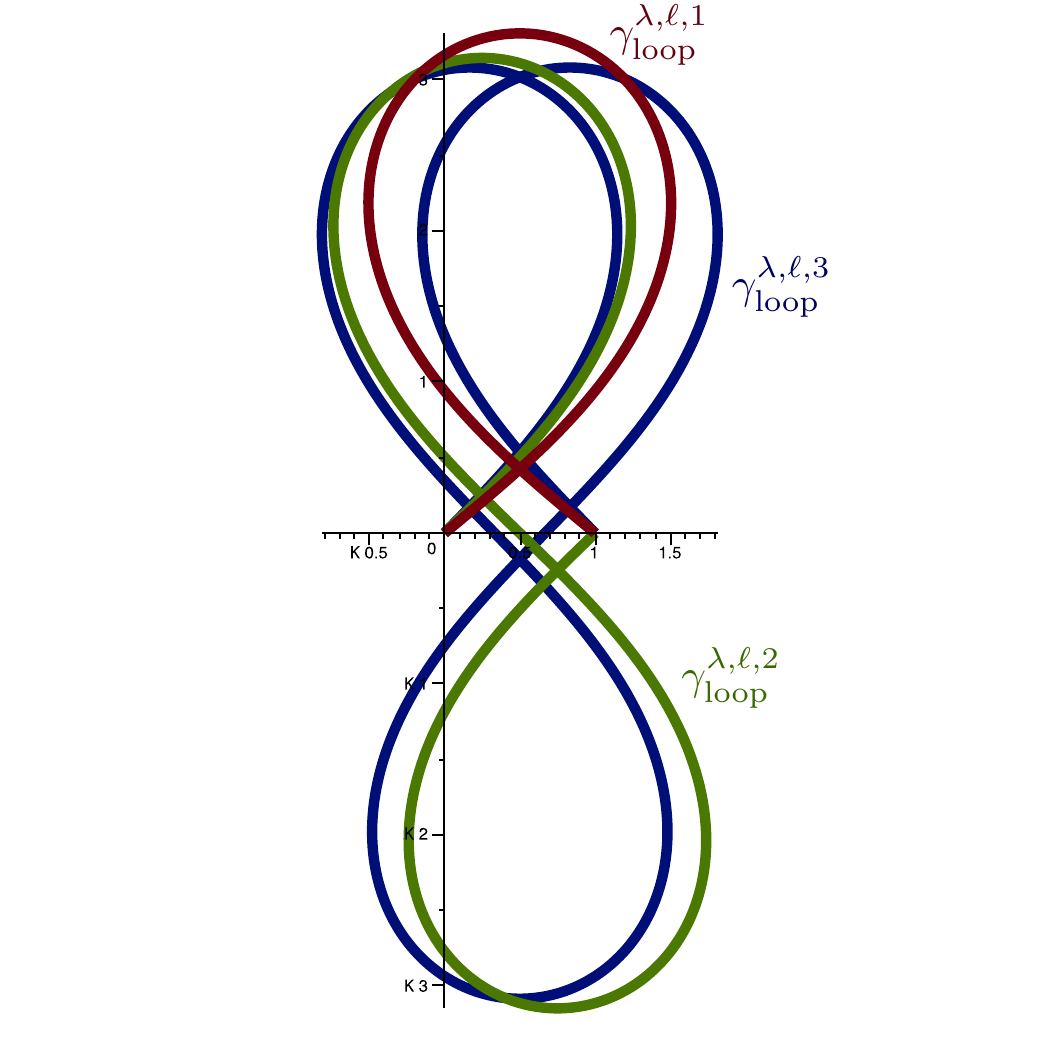}
    \caption{The left $\gamma_{\rm sarc}^{\lambda,\ell,n}$ represents a $(\lambda,\ell,n)$-shorter arc, the middle $\gamma_{\rm larc}^{\lambda,\ell,n}$ a $(\lambda,\ell,n)$-longer arc, the right $\gamma_{\rm loop}^{\lambda,\ell,n}$ a $(\lambda,\ell,n)$-loop, where $\lambda=1/2$ and $\ell=1$.}\label{fig:intro1}
\end{figure}

Explicit formulae for each penalized pinned elasticae are also obtained (see Definition~\ref{def:sarc_larc_loop} below).
Note that the classification strongly depends on the relation of $\lambda \ell^2$ and $\hat{\lambda}$ defined in \eqref{eq:def-hat_lambda}.
More precisely, if $\lambda\ell^2<\hat{\lambda}$, then $n_{\lambda,\ell}=1$ holds so that $\gamma_{\rm sarc}^{\lambda,\ell,1}$ and $\gamma_{\rm larc}^{\lambda,\ell,1}$ appear. 
On the other hand, if $\lambda\ell^2=\hat{\lambda}$, then $\gamma_{\rm sarc}^{\lambda,\ell,1}$ and $\gamma_{\rm larc}^{\lambda,\ell,1}$ coincide, and if  $\lambda\ell^2>\hat{\lambda}$, then $n_{\lambda,\ell}\geq2$ holds so that $\gamma_{\rm sarc}^{\lambda,\ell,1}$ and $\gamma_{\rm larc}^{\lambda,\ell,1}$ do not appear (see Figure~\ref{fig:intro2}). 
Such `bifurcation phenomena' for elasticae have previously been found in e.g.\ \cite[Section 2.3.3-2.3.5]{Lin98TAMS} and \cite[Remark 4.3]{MW_24X}. 

\begin{figure}[htbp]
    \centering
    \includegraphics[scale=0.17]{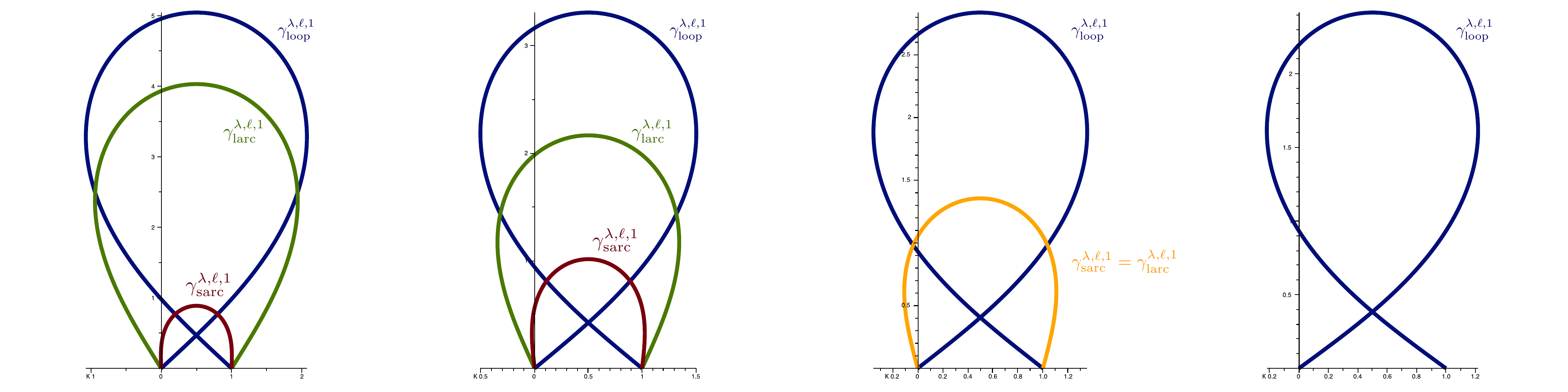}
    \caption{Case $\ell=1$. 
    The penalization parameter $\lambda$ increases from left to right; $\lambda=1/5, 1/2, \hat{\lambda}, 1$.}\label{fig:intro2}
\end{figure}

Next we address the stability (meaning here local minimality) of all the penalized pinned elasticae.
The stability result we obtain again depends on the threshold $\hat{\lambda}$.

\begin{theorem}[Stability of penalized pinned elasticae]\label{thm:stability-PPE}
Let $\gamma \in A_\ell$ be a penalized pinned elastica. 
If $ 0 <\lambda \ell^2 < \hat{\lambda}$, then $\gamma$ is stable if and only if $\gamma$ is either a line segment or $\gamma_{\rm larc}^{\lambda,\ell,1}$.
Moreover, if $\lambda \ell^2 \geq \hat{\lambda}$, then $\gamma$ is stable if and only if $\gamma$ is a line segment.
As a consequence, if $0<\lambda\ell^2<\hat{\lambda}$, then there are exactly two local minimizers of $\mathcal{E}_\lambda$ in $A_\ell$; otherwise the local minimizer is unique.
\end{theorem}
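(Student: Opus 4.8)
\emph{Proof plan.} Everything is driven by the second variation of $\mathcal{E}_\lambda$ on the manifold $A_\ell$. Let $\gamma$ be a nontrivial penalized pinned elastica, parametrized by arclength on $[0,\mathcal{L}]$ with $\mathcal{L}=L[\gamma]$ and signed curvature $k$ solving $2k''+k^3-\lambda k=0$. As derived in the preceding section, for admissible variations in $A_\ell$ one may reduce to normal variations $\varphi N$ with $\varphi\in H:=\{\varphi\in W^{2,2}(0,\mathcal{L}):\varphi(0)=\varphi(\mathcal{L})=0\}$, and the second variation equals a quadratic form $Q_\gamma[\varphi]=\int_0^{\mathcal{L}}\big((\varphi'')^2+a_k(\varphi')^2+b_k\varphi^2\big)\,\mathrm{d}s$ with $a_k,b_k$ explicit polynomials in $k$ and $\lambda$. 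I will use the following: $\gamma$ is stable (a local minimizer) if $Q_\gamma$ is coercive on $H$; and $\gamma$ is \emph{not} a local minimizer if $Q_\gamma$ is negative somewhere on $H$, or if $Q_\gamma\ge 0$ but has a nontrivial kernel at a fold point (where the third variation in the kernel direction is nonzero and produces descent). Finally, the straight segment is the unique global minimizer of $\mathcal{E}_\lambda$ on $A_\ell$, because $B\ge 0$ and $L[\gamma]\ge\ell$ with equality exactly for the segment; hence it is stable. It remains to decide stability of $\gamma_{\rm sarc}^{\lambda,\ell,n}$, $\gamma_{\rm larc}^{\lambda,\ell,n}$ and $\gamma_{\rm loop}^{\lambda,\ell,n}$ from Theorem~\ref{thm:classification-PPE}.

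\emph{Stability of $\gamma_{\rm larc}^{\lambda,\ell,1}$ for $\lambda\ell^2<\hat{\lambda}$ (the core).} I would prove coercivity of $Q_{\gamma_{\rm larc}^{\lambda,\ell,1}}$ by a disconjugacy argument: $Q_\gamma$ is positive definite on $H$ iff the associated fourth-order Jacobi equation $L_\gamma\varphi=0$ has no point conjugate to $0$ in $(0,\mathcal{L}]$. The four-dimensional solution space of $L_\gamma\varphi=0$ is spanned by explicit Jacobi fields: those generated by the ambient translations and rotation, together with two fields obtained by differentiating the explicit elliptic-function parametrization of the elastica with respect to its modulus and with respect to $\lambda$ (the deformation/scaling fields). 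Imposing the boundary conditions at $s=0$ inherited from the pinned constraint and locating the first zero of the resulting Wronskian-type determinant, one finds — using the relations among the modulus $q$, $\mathcal{L}$, $\lambda$ and $\ell$ valid along the mode-$1$ longer-arc branch — that the first conjugate point reaches $s=\mathcal{L}$ exactly when $q=\hat{q}$, the root of $f$ in \eqref{eq:def-f}, i.e. exactly when $\lambda\ell^2=g(\hat{q})=\hat{\lambda}$. For $q<\hat{q}$, equivalently $\lambda\ell^2<\hat{\lambda}$, disconjugacy holds, $Q$ is coercive, and a standard argument on $A_\ell$ upgrades this to strict local minimality. I expect this explicit elliptic computation — showing that the root of $f$ governs the first conjugate point — to be the main obstacle; Lemmas~\ref{lem:property-f}--\ref{lem:property-g} on $f$ and $g$ are exactly what it rests on.

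\emph{Instability of the rest.} For $\gamma_{\rm sarc}^{\lambda,\ell,1}$: it is the second solution of the mode-$1$ boundary value problem, and the same computation shows it has a conjugate point strictly inside $(0,\mathcal{L})$, so $Q$ is indefinite — equivalently, $\gamma_{\rm sarc}^{\lambda,\ell,1}$ and $\gamma_{\rm larc}^{\lambda,\ell,1}$ sit on the two branches of a fold at $\lambda\ell^2=\hat{\lambda}$, across which the Morse index jumps by one, so the sarc branch has index $\ge 1$. At the fold ($\lambda\ell^2=\hat{\lambda}$) the two coincide in a single degenerate critical point with one-dimensional kernel; a direct expansion of $\mathcal{E}_\lambda$ shows its third-order term along the kernel direction is nonzero, so this point is not a local minimizer either. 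For $\gamma_{\rm sarc}^{\lambda,\ell,n}$ and $\gamma_{\rm larc}^{\lambda,\ell,n}$ with $n\ge 2$, and for all loops $\gamma_{\rm loop}^{\lambda,\ell,n}$: these curves consist of several congruent periods of the curvature, and one builds $\varphi\in H$ with $Q_\gamma[\varphi]<0$ by taking a one-period Jacobi field and extending it by alternating reflections across the period interfaces to an admissible test function vanishing at $s=0$ and $s=\mathcal{L}$; the interface and boundary contributions sum to a strictly negative value once there are at least two periods (equivalently, the multi-period problem has already passed its first conjugate point — this is what the bound $n\ge n_{\lambda,\ell}$ leaves room for). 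Hence each of these critical points has $\inf_H Q_\gamma<0$ and is unstable, which together with the previous paragraph and the step on the segment proves the stability dichotomy.

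\emph{The consequence.} A local minimizer of $\mathcal{E}_\lambda$ in $A_\ell$ satisfies the first-order condition, hence is a critical point and so, by Theorem~\ref{thm:classification-PPE}, is the segment, a shorter arc, a longer arc, or a loop; being a local minimizer it is stable, so by the dichotomy it is the segment or — only when $0<\lambda\ell^2<\hat{\lambda}$ — the curve $\gamma_{\rm larc}^{\lambda,\ell,1}$. Both are indeed local minimizers (the segment globally; $\gamma_{\rm larc}^{\lambda,\ell,1}$ by the coercivity proved above) and they are distinct, since $B[\gamma_{\rm larc}^{\lambda,\ell,1}]>0=B[\text{segment}]$. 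As the segment always exists, this yields exactly two local minimizers (up to the reflection and reparametrization identifications of Theorem~\ref{thm:classification-PPE}) when $0<\lambda\ell^2<\hat{\lambda}$, and a unique one when $\lambda\ell^2\ge\hat{\lambda}$.
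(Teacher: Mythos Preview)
Your plan differs substantially from the paper's, and contains two concrete errors worth flagging.

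\textbf{Stability of $\gamma_{\rm larc}^{\lambda,\ell,1}$.} The paper does \emph{not} run a disconjugacy/Jacobi-field analysis. Instead it exploits a shortcut (Lemma~\ref{lem:stability-condition}): the one-parameter family $\gamma_w(\cdot,q)$ of wavelike elasticae has the property that, for each $q\in(0,q_*)$, $\gamma_w(\cdot,q)$ is a \emph{global} minimizer of $B$ among curves in $A_\ell$ of that fixed length (this is quoted from \cite{Ydcds}). Consequently, local minimality of $\mathcal{E}_\lambda$ in all of $A_\ell$ reduces to local minimality of the scalar function $q\mapsto \mathcal{E}_\lambda[\gamma_w(\cdot,q)]$, which is decided by a single sign: $\frac{d^2}{dq^2}\mathcal{E}_\lambda[\gamma_w(\cdot,q)]\big|_{q=q_2(\lambda\ell^2)}>0$. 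No Jacobi equation, no conjugate points. Your conjugate-point program may be feasible, but the paper bypasses it entirely.

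\textbf{Branch identification error.} You write ``for $q<\hat{q}$, equivalently $\lambda\ell^2<\hat\lambda$, disconjugacy holds'' for the longer arc. This is the wrong branch: by Definition~\ref{def:hat-q_lam}, the longer arc has modulus $q_2(\lambda\ell^2)\in[\hat{q},q_*)$, so $q_2>\hat q$ precisely when $\lambda\ell^2<\hat\lambda$, while $q_1(\lambda\ell^2)<\hat q$ is the \emph{shorter} arc. Your heuristic ``first conjugate point reaches $\mathcal{L}$ at $q=\hat q$, and for $q<\hat q$ one has disconjugacy'' therefore points toward stability of the sarc, not the larc.

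\textbf{Instability of $\gamma_{\rm loop}^{\lambda,\ell,1}$.} Your multi-period reflection construction needs at least two half-waves to splice, so it does not cover the one-mode loop. The paper handles $\gamma_{\rm loop}^{\lambda,\ell,1}$ by the same one-parameter family: $\frac{d^2}{dq^2}\mathcal{E}_\lambda[\gamma_w(\cdot,q)]\big|_{q=q_3(\lambda\ell^2)}<0$ (Lemma~\ref{lem:2nd-derivative}(iii)), which directly produces an energy-decreasing perturbation in $A_\ell$. The degenerate case $\lambda\ell^2=\hat\lambda$ is handled by the nonvanishing \emph{third} derivative along this same family. For $n\ge2$ the paper does not build explicit test functions either; it invokes the general rigidity results of \cite{MY_Crelle} on the fixed-length problem $A_{\ell,L}$ and then observes that non-minimality in $A_{\ell,L}$ implies non-minimality in $A_\ell$.
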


A key tool to prove Theorem~\ref{thm:stability-PPE} is the analysis of the second variation. 
For instance, we show that $\gamma_{\rm sarc}^{\lambda,\ell,1}$ is unstable by finding one perturbation whose second variation takes a negative value (see Lemma~\ref{lem:2nd-derivative}). 
In contrast, in order to show stability \emph{all possible perturbations} need to be taken into account. 
However, it will turn out that in our situation it suffices to show positiveness of the second derivative along \emph{a certain perturbation} of $\gamma_{\rm larc}^{\lambda,\ell,1}$, not all perturbations (see the proof of Theorem~\ref{thm:stability_1-arcs}). 
This substantial simplification is due to a minimizing property of $\gamma_{\rm loop}^{\lambda,\ell,1}$ in a different context from our setting.

We also seek to compare the energies of the above critical points and study which one is energy-minimal (if one excludes the trivial line segment).
As the explicit formulae show, $\mathcal{E}_\lambda[\gamma_{\rm larc}^{\lambda,\ell,1}]$ and $\mathcal{E}_\lambda[\gamma_{\rm loop}^{\lambda,\ell,1}]$ converge to $0$ as $\lambda\to0$. 
This implies that for small $\lambda >0$ it is not easy to determine which has less energy, $\gamma_{\rm larc}^{\lambda,\ell,1}$ or $\gamma_{\rm loop}^{\lambda,\ell,1}$. 
 On the other hand,
for larger $\lambda > 0$ a comparison with $\gamma_{\rm sarc}^{\lambda, \ell,1}$ is also required.
Indeed, since 
\begin{equation}\label{eq:SingleTermComparison}
    B[\gamma_{\rm sarc}^{\lambda,\ell,1}] > B[\gamma_{\rm larc}^{\lambda,\ell,1}] \qquad \textrm{and} \qquad L[\gamma_{\rm sarc}^{\lambda,\ell,1}] < L[\gamma_{\rm larc}^{\lambda,\ell,1}],
\end{equation}
a comparison of $\mathcal{E}_\lambda$ needs to take into account the interaction of both summands. 
 By a rigorous quantitative comparison of the energy, we obtain $\mathcal{E}_\lambda[\gamma_{\rm larc}^{\lambda,\ell,1}]<\mathcal{E}_\lambda[\gamma_{\rm sarc}^{\lambda,\ell,1}]$ and $\mathcal{E}_\lambda[\gamma_{\rm larc}^{\lambda,\ell,1}]<\mathcal{E}_\lambda[\gamma_{\rm loop}^{\lambda,\ell,1}]$ for any $0<\lambda\ell^2<\hat{\lambda}$ (see Lemma~\ref{lem:energy-comparison-sarc_vs_larc} and Lemma~\ref{lem:energy-comparison-arc_vs_loop}, respectively).
In addition, as a consequence of our energy-comparison result, we also obtain uniqueness of minimal nontrivial critical points as follows. 

\begin{theorem}[Uniqueness of nontrivial minimal penalized pinned elasticae]\label{thm:unique_nontrivial_PPE}
Let $\lambda>0$ and $\ell>0$.
\begin{itemize}
    \item[(i)] If $0<\lambda\ell^2 \leq \hat{\lambda}$, then $\gamma_{\rm larc}^{\lambda,\ell,1}$ is a unique minimizer of $\mathcal{E}_\lambda$ among nontrivial penalized pinned elasticae (up to reflection and reparametrization).
    \item[(ii)] If $\lambda\ell^2 > \hat{\lambda}$, then $\gamma_{\rm loop}^{\lambda,\ell,1}$ is a unique minimizer of $\mathcal{E}_\lambda$ among nontrivial penalized pinned elasticae (up to reflection and reparametrization).
\end{itemize} 
\end{theorem}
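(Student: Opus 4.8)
The plan is to deduce Theorem~\ref{thm:unique_nontrivial_PPE} from the classification in Theorem~\ref{thm:classification-PPE} together with the energy-comparison lemmas already announced. By Theorem~\ref{thm:classification-PPE}, every nontrivial penalized pinned elastica is, up to reflection and reparametrization, one of $\gamma_{\rm sarc}^{\lambda,\ell,n}$ ($n\ge n_{\lambda,\ell}$), $\gamma_{\rm larc}^{\lambda,\ell,n}$ ($n\ge n_{\lambda,\ell}$), or $\gamma_{\rm loop}^{\lambda,\ell,n}$ ($n\ge1$). So the problem reduces to a finite-type comparison once one shows that the energy is strictly increasing in the index $n$ within each of the three families; then only the lowest admissible index in each family can be minimal, and we compare those three (or fewer) representatives using \eqref{eq:SingleTermComparison}, Lemma~\ref{lem:energy-comparison-sarc_vs_larc}, and Lemma~\ref{lem:energy-comparison-arc_vs_loop}.

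\textbf{Step 1 (monotonicity in $n$).}
First I would establish that $n\mapsto \mathcal{E}_\lambda[\gamma_{\rm sarc}^{\lambda,\ell,n}]$, $n\mapsto \mathcal{E}_\lambda[\gamma_{\rm larc}^{\lambda,\ell,n}]$, and $n\mapsto \mathcal{E}_\lambda[\gamma_{\rm loop}^{\lambda,\ell,n}]$ are strictly increasing. The natural route is a scaling argument: an $(\lambda,\ell,n)$-configuration should be (up to similarity) $n$ suitably rescaled copies of a one-fold building block, so that from the explicit parametrization in Definition~\ref{def:sarc_larc_loop} one reads off $\mathcal{E}_\lambda[\gamma_{\bullet}^{\lambda,\ell,n}] = n\,\mathcal{E}_{\lambda}[\gamma_\bullet^{\lambda,\ell/n,1}]$ or an analogous relation in which the dependence on $n$ is manifestly monotone (bending energy scales like inverse length, length scales like length, and the multiplicity contributes an overall factor $n$). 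Differentiating the resulting closed-form expression in the elliptic modulus $q=q(n)$, or simply invoking the monotonicity properties of $\mathrm{K}$ and $\mathrm{E}$ collected in Appendix~\ref{sect:elliptic_functions} and Lemmas~\ref{lem:property-f}--\ref{lem:property-g}, yields strict monotonicity. Consequently the only candidates for a nontrivial minimizer are $\gamma_{\rm sarc}^{\lambda,\ell,n_{\lambda,\ell}}$, $\gamma_{\rm larc}^{\lambda,\ell,n_{\lambda,\ell}}$, and $\gamma_{\rm loop}^{\lambda,\ell,1}$.

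\textbf{Step 2 (the two regimes).}
If $0<\lambda\ell^2\le\hat\lambda$ then $n_{\lambda,\ell}=1$, so the three candidates are $\gamma_{\rm sarc}^{\lambda,\ell,1}$, $\gamma_{\rm larc}^{\lambda,\ell,1}$, $\gamma_{\rm loop}^{\lambda,\ell,1}$; Lemma~\ref{lem:energy-comparison-sarc_vs_larc} gives $\mathcal{E}_\lambda[\gamma_{\rm larc}^{\lambda,\ell,1}]<\mathcal{E}_\lambda[\gamma_{\rm sarc}^{\lambda,\ell,1}]$ and Lemma~\ref{lem:energy-comparison-arc_vs_loop} gives $\mathcal{E}_\lambda[\gamma_{\rm larc}^{\lambda,\ell,1}]<\mathcal{E}_\lambda[\gamma_{\rm loop}^{\lambda,\ell,1}]$ (for $\lambda\ell^2<\hat\lambda$; the boundary case $\lambda\ell^2=\hat\lambda$ is handled by continuity of both sides in $\lambda$, noting that at the threshold $\gamma_{\rm sarc}^{\lambda,\ell,1}$ and $\gamma_{\rm larc}^{\lambda,\ell,1}$ merge and the strict inequality with the loop persists in the limit). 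This proves (i). If $\lambda\ell^2>\hat\lambda$ then $\gamma_{\rm sarc}^{\lambda,\ell,1}$ and $\gamma_{\rm larc}^{\lambda,\ell,1}$ do not appear, so the surviving arc-type candidates are the $n_{\lambda,\ell}\ge2$ versions; I would compare $\mathcal{E}_\lambda[\gamma_{\rm larc}^{\lambda,\ell,n_{\lambda,\ell}}]$ and $\mathcal{E}_\lambda[\gamma_{\rm sarc}^{\lambda,\ell,n_{\lambda,\ell}}]$ against $\mathcal{E}_\lambda[\gamma_{\rm loop}^{\lambda,\ell,1}]$ again via a scaling reduction to the already-known one-fold inequalities. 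The point is that by Step~1 an $n$-fold arc has strictly more energy than the corresponding $1$-fold arc of the rescaled problem $\ell\mapsto\ell/n$, which in turn, since $\lambda(\ell/n)^2<\lambda\ell^2/4\le\cdots$ may now fall below or above $\hat\lambda$, is still dominated by a loop by a comparison with $\gamma_{\rm loop}^{\lambda,\ell/n,1}$ and then re-scaling; combined with the loop monotonicity from Step~1 this forces $\gamma_{\rm loop}^{\lambda,\ell,1}$ to be the unique nontrivial minimizer, giving (ii). Uniqueness up to reflection and reparametrization is inherited from the strictness of all the inequalities above together with the fact that the classification is exhaustive.

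\textbf{Main obstacle.}
The delicate part is Step~2 in the regime $\lambda\ell^2>\hat\lambda$: there the relevant arc candidates have index $n_{\lambda,\ell}\ge2$, which is exactly where Lemma~\ref{lem:energy-comparison-sarc_vs_larc} and Lemma~\ref{lem:energy-comparison-arc_vs_loop}, stated for the one-fold curves in the range $0<\lambda\ell^2<\hat\lambda$, do not directly apply. Making the scaling reduction rigorous — identifying precisely how an $n$-fold penalized pinned elastica relates by a similarity transformation to a one-fold penalized pinned elastica for a modified length parameter, and checking that the induced parameter $\lambda(\ell/n)^2$ lands in a range where the one-fold comparison lemmas (or their boundary limits) are available — is where the care is needed; everything else is either a direct appeal to the cited lemmas or a monotonicity statement about complete elliptic integrals. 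I would also double-check the borderline value $\lambda\ell^2=\hat\lambda$ separately, since there the first two arc families degenerate into a single curve and one must confirm that this common curve still strictly loses to the loop, which follows by taking $\lambda\ell^2\uparrow\hat\lambda$ in Lemma~\ref{lem:energy-comparison-arc_vs_loop} and using continuity of $\mathcal{E}_\lambda$ along the explicit families.
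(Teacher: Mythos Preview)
Your overall strategy coincides with the paper's: classify via Theorem~\ref{thm:classification-PPE}, establish monotonicity in $n$ (the paper's Lemma~\ref{lem:energy-comparison-mode}, proved via the representation $\mathcal{E}_{i,n}=\lambda\ell\,e(q_{i,n})$ and the monotonicity of $e$ rather than by a scaling heuristic), and then compare the lowest-mode candidates. Case~(i) is handled exactly as you describe.

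The genuine gap is in case~(ii), and you correctly flagged it as the main obstacle. Your scaling reduction does not close: the relevant identity is $\mathcal{E}_\lambda[\gamma_{\rm larc}^{\lambda,\ell,n}]=n^2\,\mathcal{E}_{\lambda/n^2}[\gamma_{\rm larc}^{\lambda/n^2,\ell,1}]$ (and likewise for the loop), so after rescaling you land in the regime $\lambda'\ell^2\le\hat\lambda$ where Lemma~\ref{lem:energy-comparison-arc_vs_loop} gives $\mathcal{E}_{\lambda'}[\gamma_{\rm larc}^{\lambda',\ell,1}]<\mathcal{E}_{\lambda'}[\gamma_{\rm loop}^{\lambda',\ell,1}]$. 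Multiplying by $n^2$ yields only $\mathcal{E}_\lambda[\gamma_{\rm larc}^{\lambda,\ell,n}]<\mathcal{E}_\lambda[\gamma_{\rm loop}^{\lambda,\ell,n}]$, and Step~1 gives $\mathcal{E}_\lambda[\gamma_{\rm loop}^{\lambda,\ell,1}]<\mathcal{E}_\lambda[\gamma_{\rm loop}^{\lambda,\ell,n}]$. These two bounds both point \emph{toward} $\gamma_{\rm loop}^{\lambda,\ell,n}$ and cannot be chained to compare $\gamma_{\rm larc}^{\lambda,\ell,n}$ with $\gamma_{\rm loop}^{\lambda,\ell,1}$; the inequality you need runs in the opposite direction.

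The paper supplies exactly the missing ingredient as a separate lemma (Lemma~\ref{lem:energy-comparison-arc_vs_loop_2}): for every $n\ge2$ one has $\mathcal{E}_\lambda[\gamma_{\rm larc}^{\lambda,\ell,n}]>\mathcal{E}_\lambda[\gamma_{\rm loop}^{\lambda,\ell,1}]$. This is not obtained by scaling but by a direct quantitative comparison using the representation $\mathcal{E}_{i,n}=2\sqrt{2}\,n\sqrt{\lambda}\,h(q_{i,n})$ with $h(q)=(2q^2-1)^{-1/2}\big((4q^2-3)\mathrm{K}(q)+2\mathrm{E}(q)\big)$: one shows that $\Psi_{\ell,n}(\lambda):=n\,h(q_2(\lambda\ell^2/n^2))-h(q_3(\lambda\ell^2))$ is strictly decreasing in $\lambda$ and then checks its positivity at the endpoint $\lambda=n^2\ell^{-2}\hat\lambda$, which requires a nontrivial estimate on $\hat q$ (Lemma~\ref{lem:hat-q-character}). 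With this lemma in hand, case~(ii) follows immediately from the classification, Lemma~\ref{lem:energy-comparison-mode}, and Lemma~\ref{lem:energy-comparison-sarc_vs_larc}. A minor additional point: your continuity argument at $\lambda\ell^2=\hat\lambda$ only yields $\le$ in the limit; the strictness is secured because the proof of Lemma~\ref{lem:energy-comparison-arc_vs_loop} (via strict monotonicity of $h$ on $[\hat q,1)$ and $q_{2,1}=\hat q<q_*<q_{3,1}$) works verbatim at the threshold.
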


Our results are applicable to the asymptotic analysis of the $L^2$-gradient flow for $\mathcal{E}_\lambda$, so-called \emph{$\lambda$-elastic flow}, or simply \emph{elastic flow}.
The $\lambda$-elastic flow is defined by an $L^2(ds)$-gradient flow of $\mathcal{E}_\lambda$, and it is given by a one-parameter family of immersed curves $\gamma(x,t):[0,1]\times [0,\infty)\to\mathbf{R}^2$ such that 
\begin{align}\label{eq:elastic_flow}
\partial_t \gamma = -2\nabla^2_s \kappa - |\kappa|^2\kappa + \lambda \kappa,
\end{align}
where $\kappa(x,t):(0,1)\times [0,\infty) \to \mathbf{R}^2$ denotes the curvature vector of $\gamma$, defined by $\kappa=\partial^2_s\gamma$, and $\nabla_s\psi:=\partial_s\psi-(\partial_s \psi, \partial_s \gamma)\partial_s \gamma$ denotes the normal derivative of a smooth vector field $\psi$ along $\gamma$. 
We consider the $\lambda$-elastic flow under the so-called natural (or Navier) boundary condition: 
\begin{align}\label{eq:Navier-BC_flow}
    \gamma(0,t)=(0,0), \ \ \gamma(1,t)=(\ell,0), \ \ \kappa(0,t)=\kappa(1,t)=0 \quad \text{for all} \ \ t\geq0.
\end{align}
Here we are interested in the question of \emph{eventual embeddedness}.  We find a sharp energy threshold below which the above flow destroys any self-intersection in finite time. For more results on (not necessarily eventual) embeddedness of elastic flows we refer to \cite{Bla10,MMR23}.


\begin{theorem}[Eventual embeddedness of elastic flow]\label{thm:embeddedness}
Let $C_{\lambda,\ell}:=\mathcal{E}_\lambda[\gamma_{\rm loop}^{\lambda,\ell,1}]$ and $\gamma_0 \in A_\ell$ be a smooth curve satisfying \eqref{eq:Navier-BC_flow}. 
If
\begin{align} \label{eq:embedded-threshold}
\mathcal{E}_\lambda[\gamma_0] < C_{\lambda,\ell},
\end{align}
then there exists $t_0>0$ such that the $\lambda$-elastic flow with initial datum $\gamma_0$ is embedded for all time $t\geq t_0$. 
\end{theorem}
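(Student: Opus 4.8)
The plan is to combine the long-time existence and subconvergence theory for the elastic flow under Navier boundary conditions with the sharp classification and energy-comparison results obtained above. First I would recall that, for a smooth initial datum $\gamma_0 \in A_\ell$ satisfying \eqref{eq:Navier-BC_flow}, the $\lambda$-elastic flow exists for all times $t\geq 0$ and smoothly subconverges (along a subsequence $t_j\to\infty$, possibly after reparametrization and up to translation, which is here prevented by the fixed endpoints) to a smooth critical point $\gamma_\infty$ of $\mathcal{E}_\lambda$ in $A_\ell$ satisfying the same Navier boundary condition; this is the by-now standard machinery (Dziuk--Kuwert--Schätzle type arguments, adapted to the pinned/Navier setting — cf.\ the references \cite{DKS2002,MPP21} cited in the introduction). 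Since $\mathcal{E}_\lambda$ is non-increasing along the flow, we have $\mathcal{E}_\lambda[\gamma_\infty]\leq \mathcal{E}_\lambda[\gamma_0] < C_{\lambda,\ell} = \mathcal{E}_\lambda[\gamma_{\rm loop}^{\lambda,\ell,1}]$.

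Next I would use Theorem~\ref{thm:classification-PPE} together with the energy comparisons to pin down $\gamma_\infty$. The critical point $\gamma_\infty$ is either the line segment, a shorter arc, a longer arc, or a loop. By Theorem~\ref{thm:unique_nontrivial_PPE} and the energy inequalities $\mathcal{E}_\lambda[\gamma_{\rm larc}^{\lambda,\ell,1}] < \mathcal{E}_\lambda[\gamma_{\rm loop}^{\lambda,\ell,1}]$ and $\mathcal{E}_\lambda[\gamma_{\rm larc}^{\lambda,\ell,1}] < \mathcal{E}_\lambda[\gamma_{\rm sarc}^{\lambda,\ell,1}]$ (and the monotonicity of $\mathcal{E}_\lambda$ in the mode number $n$, so that all higher-mode arcs and loops have energy at least $\mathcal{E}_\lambda[\gamma_{\rm loop}^{\lambda,\ell,1}]$), the only critical points with energy strictly below $C_{\lambda,\ell}$ are the line segment and, when $0<\lambda\ell^2<\hat\lambda$, the longer arc $\gamma_{\rm larc}^{\lambda,\ell,1}$. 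Both of these are \emph{embedded} curves (the line segment trivially; the longer arc by its explicit parametrization in Definition~\ref{def:sarc_larc_loop}, which is a simple convex-type arc). Hence $\gamma_\infty$ is embedded.

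Finally I would upgrade subconvergence of a subsequence to the eventual-embeddedness statement for \emph{all} large $t$. Embeddedness is an open condition in the $C^1$-topology on immersed curves with fixed endpoints — more precisely, there is $\varepsilon>0$ such that any curve $\tilde\gamma\in A_\ell$ with $\|\tilde\gamma-\gamma_\infty\|_{C^1}<\varepsilon$ is embedded (this uses compactness of $[0,1]$ and the fact that the only ``collisions'' to rule out are near-self-intersections, which persist under small $C^1$-perturbations of an embedded curve). Smooth subconvergence along $t_j\to\infty$ gives $\|\gamma(\cdot,t_j)-\gamma_\infty\|_{C^1}\to 0$; to pass from the subsequence to all large $t$, I would invoke full convergence of the flow, which follows from a Łojasiewicz--Simon gradient inequality for $\mathcal{E}_\lambda$ at $\gamma_\infty$ (again standard in this context, cf.\ \cite{MPP21}), or alternatively argue that once the flow enters the $\varepsilon$-neighborhood it stays embedded by the same open-condition argument combined with the fact that it cannot escape to a different critical point of smaller or equal energy. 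Either way we obtain $t_0>0$ with $\gamma(\cdot,t)$ embedded for all $t\geq t_0$.

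\medskip

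\noindent\textbf{Main obstacle.} The genuinely nontrivial input is the long-time existence and smooth subconvergence of the elastic flow under the \emph{Navier} (second-order) boundary condition \eqref{eq:Navier-BC_flow}; while this is ``known technology,'' one must check that the standard a priori estimates and the blow-up exclusion go through with these boundary terms (in particular that no boundary concentration occurs). The passage from subconvergence to full convergence via Łojasiewicz--Simon is the secondary technical point; however, for the stated conclusion one can sidestep full convergence by the topological openness argument, so the real crux is the flow's global existence and compactness theory in the pinned-Navier setting.
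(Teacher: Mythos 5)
Your overall strategy (long-time existence and subconvergence, then classification plus energy comparison to identify the possible limits, then openness of embeddedness in $C^1$) is the same as the paper's, but two steps contain genuine problems.

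First, your identification of the critical points below the threshold $C_{\lambda,\ell}$ is wrong. From $\mathcal{E}_\lambda[\gamma_{\rm larc}^{\lambda,\ell,1}]<\mathcal{E}_\lambda[\gamma_{\rm sarc}^{\lambda,\ell,1}]$ you cannot conclude that $\gamma_{\rm sarc}^{\lambda,\ell,1}$ lies above the loop: by Lemma~\ref{lem:energy-comparison-sarc_vs_loop} there is a $\lambda_\dagger\in(0,\ell^{-2}\hat{\lambda})$ with $\mathcal{E}_\lambda[\gamma_{\rm sarc}^{\lambda,\ell,1}]<\mathcal{E}_\lambda[\gamma_{\rm loop}^{\lambda,\ell,1}]$ for all $\lambda>\lambda_\dagger$, so the one-mode shorter arc is a legitimate subsequential limit under the hypothesis \eqref{eq:embedded-threshold}. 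Your list of possible limits (line segment and longer arc only) therefore omits it, and with it the obligation to prove that $\gamma_{\rm sarc}^{\lambda,\ell,1}$ is embedded. That verification is not free: the paper devotes Lemma~\ref{lem:embeddedness_omega} to it, using convexity of the first coordinate on $[0,L/2]$ together with the reflection symmetry of Lemma~\ref{lem:symmetry-PPE}. The final conclusion survives because the shorter arc \emph{is} embedded, but as written your argument has a hole exactly where the paper does real work.

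Second, the passage from subconvergence along $t_j$ to embeddedness for all $t\geq t_0$ is not carried out. The L{}ojasiewicz--Simon route is far heavier than needed and is not justified in your sketch (one must verify analyticity of the energy in an appropriate gauge-fixed setting and handle the reparametrization invariance), while your alternative --- ``once the flow enters the $\varepsilon$-neighborhood it stays embedded \ldots\ it cannot escape to a different critical point'' --- is not an argument: openness of embeddedness says nothing about the flow remaining in that neighborhood, and without full convergence you cannot exclude that the flow leaves it and develops a self-intersection later. The paper sidesteps both issues with a short contradiction argument: if no such $t_0$ existed, there would be a sequence $t_j\to\infty$ of \emph{non-embedded} times; applying Proposition~\ref{prop:MPP21_Prop5.2} to \emph{that} sequence produces a subsequence converging in $C^1$ (after reparametrization) to an embedded limit in $\omega$, whence the curves along the subsequence are eventually embedded --- a contradiction. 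You should replace your last step by this argument; as proposed, neither of your two routes closes the proof.
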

The value $C_{\lambda,\ell}$ is optimal in the sense that the above conclusion fails if we put any larger constant than $C_{\lambda,\ell}$ in \eqref{eq:embedded-threshold}. Indeed, there exists a smooth curve $\gamma_0$ with $\mathcal{E}_\lambda[\gamma_0] = C_{\lambda,\ell}$ and such that the $\lambda$-elastic flow with initial datum $\gamma_0$ is not embedded for all $t\geq0$. Actually, one may choose $\gamma_0 = \gamma_{\rm loop}^{\lambda,\ell,1}$.

The instability of $\gamma_{\rm loop}^{\lambda,\ell,1}$ in Theorem~\ref{thm:stability-PPE} allows us to construct a nonembedded smooth curve $\gamma_0 \in A_\ell$ satisfying the condition \eqref{eq:embedded-threshold}, cf.\ Remark~\ref{rem:extinction_loop}. 
From Theorem~\ref{thm:embeddedness} we then infer that the flow destroys each self-intersection of $\gamma_0$ in finite time.


We close this introduction by comparing the properties of our penalized pinned elasticae with those already known from \cite{MY_Crelle} about the critical points of $B$ in 
\[
A_{\ell,L}:= \{\gamma \in A_\ell \,|\, L[\gamma]=L\}, 
\]
where $L>\ell$ is given. 
It turns out that the curves of Theorem~\ref{thm:classification-PPE} are also critical points in $A_{\ell,L}$ if $L$ is chosen to be their corresponding length. In \cite{MY_Crelle}, stability of these curves in $A_{\ell,L}$ is investigated. A major difference is that depending on the choice of $L$ it is possible that $\gamma_{\rm sarc}^{\lambda,\ell,1}$ is stable in $A_{\ell,L}$, whereas this is never the case for our penalized problem (cf.\ Theorem~\ref{thm:stability-PPE}). Another difference is that in $A_{\ell,L}$ stable critical points are always unique whereas for our penalized problem the number of stable critical points depends on $\lambda$.
Exposing these differences to the fixed-length problem is a main novelty of the present article.

\subsection*{Organization}
This paper is organized as follows:
In Section~\ref{sect:classification} we give the complete classification of penalized pinned elasticae and prove Theorem~\ref{thm:classification-PPE}. 
In Section~\ref{sect:stability} we investigate the stability (local minimality) of all penalized pinned elasticae to complete the proof of Theorem~\ref{thm:stability-PPE}. 
In Section~\ref{sect:energy_comparison}, we quantitatively compare the energy of penalized pinned elasticae, which yields the proof of Theorem~\ref{thm:unique_nontrivial_PPE}.
In Section~\ref{sect:elastic-flow} we apply our results in Section~\ref{sect:classification}--\ref{sect:energy_comparison} to the analysis of the elastic flow. 

\subsection*{Acknowledgments}
This work was initiated during the second author’s visit at Freiburg University. 
The second author is very thankful to the first author for his warm hospitality and providing a fantastic research atmosphere.
Both authors are grateful to Tatsuya Miura for helpful suggestions.
The authors are grateful to the referee whose suggestions have led to substantial improvements in this article. 
The second author is supported by FMfI Excellent Poster Award 2022 and JSPS KAKENHI Grant Number 24K16951.


\section{Classification}\label{sect:classification}

In this paper we define critical points of $\mathcal{E}_\lambda$ in $A_\ell$ in the following sense: 
\begin{definition}\label{def:PPE}
We call $\gamma$ a \emph{penalized pinned elastica} if $\gamma$ satisfies 
\begin{itemize}
    \item[(i)] $\gamma\in A_\ell$; 
    \item[(ii)] For every smooth family $\{\gamma_\varepsilon\}_{\varepsilon \in (-\varepsilon_0,\varepsilon_0)} \subset A_{\ell}$ with $\varepsilon_0 > 0$ such that $\gamma_0=\gamma$, it follows that 
\begin{equation}\label{eq:first-variation-0}
    \frac{d}{d\varepsilon} \mathcal{E}_\lambda[\gamma_\varepsilon] \Big\vert_{\varepsilon=0} = 0. 
\end{equation}
\end{itemize}
\end{definition}
Note that any local minimizer is a penalized pinned elastica. 
To begin with, we deduce the Euler--Lagrange equation and an additional (natural) boundary condition for penalized pinned elasticae.

\begin{lemma}[The Euler--Lagrange equation and boundary condition] \label{lem:EL-PPE}
Let $\gamma \in A_\ell$ be a penalized pinned elastica. 
Then, the signed curvature $k$ of the arclength reparametrization of $\gamma$ is analytic on $(0,L)$, where $L:=L[\gamma]$, and satisfies 
\begin{gather}
2k'' + k^3 -\lambda k =0, \label{eq:planar-EL}\\
k(0)=k(L)=0. \label{eq:pinned_BC}
\end{gather}
\end{lemma}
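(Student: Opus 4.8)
The plan is to derive the first variation of $\mathcal{E}_\lambda$ along arbitrary admissible perturbations and then read off both the interior equation and the extra boundary condition from it.

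First I would reduce to the arclength setting. Since $\gamma$ is $W^{2,2}$-immersed, its arclength reparametrization is well defined and shares the same image; a perturbation $\gamma + \varepsilon\eta$ with $\eta \in W^{2,2}\cap W^{1,2}_0$ can be re-expressed, after reparametrization, in terms of normal and tangential components, and it is standard (and classical, going back to the first-variation computation for the elastic energy, cf.\ the references in the introduction) that only the normal component $\varphi := \langle \eta, \nu\rangle$ enters the variation of $B$ and $L$ modulo the constraint that the endpoints are fixed. Concretely I would compute
\begin{align*}
\frac{d}{d\varepsilon}\mathcal{E}_\lambda[\gamma+\varepsilon\eta]\Big|_{\varepsilon=0}
= \int_0^L \Big( 2k'' + k^3 - \lambda k\Big)\varphi \,\mathrm{d}s
+ \Big[ 2k'\varphi - 2k\,\partial_s\varphi \Big]_0^L + (\text{tangential boundary terms}),
\end{align*}
where the boundary terms coming from the tangential part vanish because $\eta(0)=\eta(1)=0$. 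The derivation of this formula is the routine but slightly lengthy part; I would either cite it or include it with the integrations by parts made explicit.

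Next I would extract the equations. Testing first with $\varphi \in C_c^\infty(0,L)$ kills all boundary terms, so $\int_0^L (2k''+k^3-\lambda k)\varphi\,\mathrm{d}s = 0$ for all such $\varphi$, giving \eqref{eq:planar-EL} in the distributional sense; a bootstrap/elliptic-regularity argument (the equation $2k'' = \lambda k - k^3$ with $k\in W^{2,2}$, so $k\in C^{1,\alpha}$, hence the right-hand side is $C^{1,\alpha}$, hence $k\in C^{3,\alpha}$, iterate) upgrades $k$ to $C^\infty$ and then analyticity follows because the nonlinearity is polynomial (analytic ODE, analytic solution). With \eqref{eq:planar-EL} in hand, the interior integral vanishes for every admissible $\varphi$, so the boundary terms must vanish for all $\varphi$. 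Here one has to be careful that not every pair of values $(\varphi(0),\partial_s\varphi(0))$ is attainable by an admissible $\eta$ with $\eta(0)=0$: the constraint $\eta(0)=0$ forces $\varphi(0)=\langle\eta(0),\nu(0)\rangle = 0$, so the term $2k'\varphi$ at the endpoints drops automatically, but $\partial_s\varphi(0)$ is free (one can prescribe $\eta'(0)$ in the normal direction while keeping $\eta(0)=0$), so the surviving term $-2k\,\partial_s\varphi$ at $s=0$ forces $k(0)=0$, and likewise $k(L)=0$, which is \eqref{eq:pinned_BC}.

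The main obstacle I expect is the bookkeeping around the boundary terms: correctly identifying which variations of the boundary data are admissible given only $\eta(0)=\eta(1)=0$ (i.e.\ that the zeroth-order term is pinned but the first-order term is free), and making sure the tangential reparametrization contributions to the boundary terms genuinely cancel — this is where a sloppy computation would wrongly produce a second-order (clamped) condition instead of the natural condition $k(0)=k(L)=0$. A secondary technical point is justifying the passage between the $W^{2,2}$ parametrization on $[0,1]$ and the arclength parametrization on $[0,L]$ at the level of admissible test functions, which I would handle by noting that reparametrization is a diffeomorphism of Sobolev class and that the space of normal perturbations $\varphi$ obtained is exactly $\{\varphi \in W^{2,2}(0,L) : \varphi(0)=\varphi(L)=0\}$ together with arbitrary first derivatives at the endpoints.
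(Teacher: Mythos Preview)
Your approach is correct and essentially the same as the paper's: derive the first variation, test with compactly supported normal perturbations to obtain the interior equation \eqref{eq:planar-EL}, then exploit that $\eta(0)=\eta(1)=0$ pins $\varphi$ but leaves $\partial_s\varphi$ free at the endpoints to extract the natural condition $k(0)=k(L)=0$, and finally invoke the polynomial ODE for analyticity. The only minor difference is order: the paper first establishes smoothness of $k$ from the \emph{unintegrated} weak form (where $\varphi''$ appears) before integrating by parts, which sidesteps the slight circularity in your bootstrap step where you start from ``$k\in W^{2,2}$'' (a priori $k$ is only $L^2$); you should begin the bootstrap from the weak identity rather than from the already-integrated equation.
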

\begin{proof}
First we show that $k$ is smooth on $(0,L)$. 
For arbitrary $\varphi\in W^{2,2}(0,L;\mathbf{R}^2)\cap W^{1,2}_0(0,L;\mathbf{R}^2)$ we 
consider $\eta(x):=\varphi(s(x))$ for $x\in[0,1]$, where $s$ denotes the arclength function, i.e., $s(x):=\int_0^x|\gamma'|$. 
Choosing $\gamma_\varepsilon=\gamma+\varepsilon\eta$ in \eqref{eq:first-variation-0},
and using the known formulae of first derivative of $B$ and $L$ (cf.\ \cite[Lemma~A.1]{MY_AMPA}), we deduce that
\[
\frac{d}{d\varepsilon}\mathcal{E}_\lambda[\gamma + \varepsilon\eta]\Big|_{\varepsilon=0}=\int_0^L \Big( 2k\langle \mathbf{n}, \varphi'' \rangle -3 |k|^2 \langle \mathbf{t}, \varphi' \rangle - \lambda k\langle \mathbf{n}, \varphi \rangle \Big) \; \mathrm{d}s,
\]
where $\mathbf{t}$ and $\mathbf{n}$ denote the unit tangent vector and the unit normal vector of $\tilde{\gamma}$, respectively.
Since this identity holds in particular for all $\varphi \in C^\infty_{\rm c}(0,L;\mathbf{R}^2)$, we can deduce from a standard bootstrap argument that $k$ is of class 
$C^\infty(0,L)$. More careful analysis of the equation, which is by now standard, shows also that $k \in C^\infty([0,L])$.

Next we show that $k$ satisfies \eqref{eq:planar-EL} and \eqref{eq:pinned_BC}.
Let $\phi\in W^{2,2}(0,L)\cap W^{1,2}_0(0,L)$ be arbitrary and consider $\eta(x):=\phi(s(x))\mathbf{n}(s(x))$ and again look at $\gamma_\varepsilon:= \gamma + \varepsilon \eta$. 
With the help of integration by parts, we reduce \eqref{eq:first-variation-0} to 
\begin{align*} 
\left[2 k(s)\phi'(s) \right]_{s=0}^{s=L} + \int_0^L \big( 2k'' + k^3 -\lambda k \big) \phi \; \mathrm{d}s=0.
\end{align*}
Considering arbitrary $\phi \in C_{\rm c}^\infty(0,L)$ this implies that $k$ satisfies \eqref{eq:planar-EL}. 
In addition, by choosing $\phi \in C^\infty([0,1])$ with $\phi(0) = \phi(L) = \phi'(0)= 0$ and $\phi'(L)=1$, we deduce from the above relation that $k$ satisfies $k(L)= 0$. 
Similarly we also obtain $k(0) = 0$.
Analyticity of $k$ immediately follows from the fact that $k$ satisfies the polynomial differential equation \eqref{eq:planar-EL}. 
\end{proof}

For later use we exhibit some elementary properties of $f$ and $g$, defined as in \eqref{eq:def-f} and \eqref{eq:def-g}. 
In the following let $q_*\in(0,1)$ denote the unique zero of $q \mapsto 2\mathrm{E}(q)- \mathrm{K}(q)$, cf.\ Lemma~\ref{lem:elliptic_2E-K}.
The proof of Lemma~\ref{lem:property-f} is postponed to Appendix~\ref{sect:proof-fgeh} since it follows from a straightforward calculation.

\begin{lemma}\label{lem:property-f}
Let $f: [\frac{1}{\sqrt{2}}, 1)\to \mathbf{R}$ be the function defined by \eqref{eq:def-f}. 
Then, there exists a unique $\hat{q}\in(\frac{1}{\sqrt{2}}, q_*)$ such that
\begin{align}\label{eq:hat_q-def}
f(\hat{q})=0.
\end{align}
In addition, 
$f>0$ on $[\tfrac{1}{\sqrt{2}}, \hat{q})$ and $f<0$ on $(\hat{q},1)$.
\end{lemma}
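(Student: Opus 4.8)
\textbf{Proof proposal for Lemma~\ref{lem:property-f}.}

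The plan is to reduce the transcendental statement to a monotonicity argument on a single, well-behaved auxiliary function. First I would rewrite $f(q)$ in terms of the normalized quantities that appear naturally when differentiating complete elliptic integrals: recall $\frac{d}{dq}\mathrm{K}(q) = \frac{\mathrm{E}(q) - (1-q^2)\mathrm{K}(q)}{q(1-q^2)}$ and $\frac{d}{dq}\mathrm{E}(q) = \frac{\mathrm{E}(q) - \mathrm{K}(q)}{q}$. Using these, I would compute $f'(q)$ and simplify. Experience with these combinations (the coefficients $4q^4 - 5q^2 + 1 = (1-q^2)(1-4q^2) \cdot(-1)\cdot(\ldots)$, more usefully $4q^4 - 5q^2+1 = (4q^2-1)(q^2-1)$, and $-8q^4+8q^2-1$) suggests that $f$ factors through $(2\mathrm{E}(q) - \mathrm{K}(q))$ and $(2q^2-1)$ in a way tied to $g$; indeed the paper already signals that $\hat q$ is simultaneously a zero of $f$ and a critical point of $g$, so I expect a clean identity of the form $g'(q) = c(q)\, f(q)$ for an explicitly positive factor $c(q)$ on $(\tfrac{1}{\sqrt2},1)$, which would make the two claims equivalent.

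Next, to locate the root, I would evaluate $f$ at the endpoints and at $q_*$. At $q = \tfrac{1}{\sqrt2}$ one has $2q^2 - 1 = 0$, so $4q^4 - 5q^2 + 1 = 1 - \tfrac{5}{2} + 1 = -\tfrac12$ and $-8q^4 + 8q^2 - 1 = -2 + 4 - 1 = 1$, giving $f(\tfrac{1}{\sqrt2}) = -\tfrac12 \mathrm{K}(\tfrac{1}{\sqrt2}) + \mathrm{E}(\tfrac{1}{\sqrt2})$; since $\mathrm{E}(\tfrac{1}{\sqrt2}) \approx 1.3506$ and $\mathrm{K}(\tfrac{1}{\sqrt2}) \approx 1.8541$, this is $\approx 1.3506 - 0.9270 > 0$. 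For the behaviour as $q \to 1^-$, I would use $\mathrm{E}(q) \to 1$ and $\mathrm{K}(q) \to +\infty$ logarithmically, so that the $\mathrm{K}$-coefficient $4q^4 - 5q^2 + 1 = (4q^2-1)(q^2-1) \to 0^-$ but multiplied by $\mathrm{K}(q) \to +\infty$; a careful expansion ($q^2 - 1 \sim -(1-q^2)$ and $\mathrm{K}(q) \sim \tfrac12\log\frac{16}{1-q^2}$) shows the product tends to $0$, while $(-8q^4+8q^2-1)\mathrm{E}(q) \to -1$, hence $f(q) \to -1 < 0$. Combined with $f(\tfrac{1}{\sqrt2}) > 0$, the intermediate value theorem gives at least one root, and I would check via the endpoint values of $2\mathrm{E} - \mathrm{K}$ (positive at $\tfrac{1}{\sqrt2}$, and $q_* \approx 0.9089$) that the sign change happens before $q_*$.

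For uniqueness and the sign statement $f > 0$ on $[\tfrac{1}{\sqrt2}, \hat q)$, $f < 0$ on $(\hat q, 1)$, the cleanest route is to show $f$ is strictly decreasing on $[\tfrac{1}{\sqrt2}, 1)$, or failing that, that $f$ has exactly one critical point and is unimodal in the right way. I would compute $f'$ explicitly; after substituting the derivative formulae and clearing the factor $\frac{1}{q(1-q^2)}$, I expect $f'(q)$ to equal $\frac{1}{q(1-q^2)}$ times a combination $a(q)\mathrm{K}(q) + b(q)\mathrm{E}(q)$ with polynomial $a,b$, and I would hope this combination has a definite sign on the whole interval (likely negative), possibly after using the standard Legendre-type inequalities relating $\mathrm{E}$, $\mathrm{K}$, and $(1-q^2)\mathrm{K}$ — e.g. $\mathrm{E}(q) < \mathrm{K}(q)$, $\mathrm{E}(q) > (1-q^2)\mathrm{K}(q)$, and the refined bound $\frac{\pi}{2}\sqrt{1-q^2} \le \mathrm{E}(q)$. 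The main obstacle is precisely this last step: showing the sign of the bracket in $f'$ is constant is a genuine elliptic-integral inequality and may not follow from the most elementary bounds; if a single algebraic inequality does not suffice, I would split $[\tfrac{1}{\sqrt2},1)$ into two or three subintervals and use different comparisons on each, or reduce to showing positivity of a power-series with manifestly signed coefficients via the hypergeometric representations $\mathrm{K}(q) = \tfrac{\pi}{2}\sum \binom{2n}{n}^2 (q/4)^{2n}\cdot(\ldots)$. Since the paper defers this lemma's proof to Appendix~\ref{sect:proof-fgeh} and calls it "a straightforward calculation," I anticipate that the explicit $f'$ does collapse to a sign-definite expression after honest simplification, and the bulk of the work is bookkeeping rather than a deep inequality.
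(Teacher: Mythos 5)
Your existence argument is sound and essentially the paper's: $f(\tfrac{1}{\sqrt2})=\tfrac12\bigl(2\mathrm{E}(\tfrac{1}{\sqrt2})-\mathrm{K}(\tfrac{1}{\sqrt2})\bigr)>0$ (the paper gets this from Lemma~\ref{lem:elliptic_2E-K} rather than from numerical values), negativity further to the right, and the intermediate value theorem. The gap is in your route to uniqueness and to the sign statement: $f$ is \emph{not} strictly decreasing on all of $[\tfrac{1}{\sqrt2},1)$, so your ``cleanest route'' fails. Indeed,
\[
f'(q)=(20q^3-13q)\,\mathrm{K}(q)+20q(1-2q^2)\,\mathrm{E}(q),
\]
and as $q\to1^-$ the coefficient $20q^3-13q\to 7>0$ while $\mathrm{K}(q)\to+\infty$ and $\mathrm{E}(q)\to1$, so $f'(q)\to+\infty$ and $f$ is increasing near $q=1$. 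Your fallback of splitting the interval is the right instinct, but you do not say where to split or what replaces monotonicity on the right piece. The paper splits at $q_*$: on $(\tfrac{1}{\sqrt2},q_*)$ it uses $2\mathrm{E}(q)>\mathrm{K}(q)$ to absorb the $\mathrm{E}$-term and obtain $f'(q)<-3q\mathrm{K}(q)<0$, which gives uniqueness of the zero in that subinterval; on $[q_*,1)$ it does \emph{not} argue via $f'$ at all, but writes $f=a\,\mathrm{K}+b\,\mathrm{E}$ with $a(q)=(4q^2-1)(q^2-1)<0$ and uses $\mathrm{K}(q)>2\mathrm{E}(q)$ there to get the direct bound $f(q)<\bigl(2a(q)+b(q)\bigr)\mathrm{E}(q)=-(2q^2-1)\mathrm{E}(q)<0$. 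Without some such second argument on $[q_*,1)$ your proof does not exclude further zeros to the right of $q_*$.

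Two smaller points. First, your opening claim that $g'(q)=c(q)f(q)$ with $c$ positive on all of $(\tfrac{1}{\sqrt2},1)$ is incorrect: the factor is $\tfrac{16}{q(1-q^2)}\bigl(2\mathrm{E}(q)-\mathrm{K}(q)\bigr)$, which changes sign at $q_*$, so the zero sets of $f$ and $g'$ are not equivalent in the way you suggest; moreover, the paper derives the properties of $g$ (Lemma~\ref{lem:property-g}) \emph{from} the present lemma, so any argument leaning on $g$ here would be circular. Second, your plan to ``check via the endpoint values of $2\mathrm{E}-\mathrm{K}$ that the sign change happens before $q_*$'' should be made concrete: evaluating at $q_*$, where $2\mathrm{E}(q_*)=\mathrm{K}(q_*)$, gives $f(q_*)=-(2q_*^2-1)\mathrm{E}(q_*)<0$, which is exactly the boundary case of the paper's estimate and does place a root in $(\tfrac{1}{\sqrt2},q_*)$.
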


The proof of the following lemma is safely omitted since it immediately follows from the definition of $g$ in \eqref{eq:def-g}.

\begin{lemma}\label{lem:property-g}
Let $g:[\frac{1}{\sqrt{2}},1) \to \mathbf{R}$ be the function defined by \eqref{eq:def-g}.
Then,
\begin{align}\label{eq:diff-g}
g'(q)=\frac{16}{q(1-q^2)}\big(2\mathrm{E}(q)-\mathrm{K}(q) \big) f(q)
\end{align}
for $q\in(\frac{1}{\sqrt{2}},1)$.
In addition, the function $g$ has exactly two local extrema at the points $\hat{q}\simeq 0.79257$ 
and $q_*\simeq 0.90891$. 
More precisely, $
g(\hat{q})=:\hat{\lambda}\simeq 0.70107 
$
is a unique local maximum and $g(q_*)=0$ is a local minimum and $g$ is strictly monotone in $(\frac{1}{\sqrt{2}},\hat{q}]$, $[\hat{q},q_*]$ and $[q_*,1)$. 
\end{lemma}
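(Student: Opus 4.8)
\textbf{Proof proposal for Lemma~\ref{lem:property-g}.}

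The plan is to treat everything as a direct computation starting from the closed form of $g$, using only classical identities for the complete elliptic integrals (collected in Appendix~\ref{sect:elliptic_functions}) together with the already-established properties of $f$ and of the auxiliary function $q \mapsto 2\mathrm{E}(q) - \mathrm{K}(q)$. First I would establish the derivative formula \eqref{eq:diff-g}. Writing $g(q) = 8\bigl(2\mathrm{E}(q) - \mathrm{K}(q)\bigr)^2(2q^2 - 1)$, the product rule gives
\[
g'(q) = 16\bigl(2\mathrm{E}(q) - \mathrm{K}(q)\bigr)\bigl(2\mathrm{E}'(q) - \mathrm{K}'(q)\bigr)(2q^2 - 1) + 32q\bigl(2\mathrm{E}(q) - \mathrm{K}(q)\bigr)^2.
\]
Now I would substitute the standard derivatives $\mathrm{E}'(q) = \frac{\mathrm{E}(q) - \mathrm{K}(q)}{q}$ and $\mathrm{K}'(q) = \frac{\mathrm{E}(q) - (1-q^2)\mathrm{K}(q)}{q(1-q^2)}$, factor out $\frac{16}{q(1-q^2)}\bigl(2\mathrm{E}(q) - \mathrm{K}(q)\bigr)$, and check that the remaining bracket simplifies exactly to $f(q)$ as defined in \eqref{eq:def-f}. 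This is a routine but somewhat lengthy algebraic manipulation; the main thing to watch is bookkeeping of the $(1-q^2)$ and $q$ factors and collecting the coefficients of $\mathrm{K}$ and $\mathrm{E}$ into the quartic polynomials $4q^4 - 5q^2 + 1$ and $-8q^4 + 8q^2 - 1$.

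With \eqref{eq:diff-g} in hand, the sign analysis of $g'$ on $(\frac{1}{\sqrt 2}, 1)$ reduces to knowing the signs of the three factors. By Lemma~\ref{lem:elliptic_2E-K}, $2\mathrm{E}(q) - \mathrm{K}(q) > 0$ on $(\frac{1}{\sqrt 2}, q_*)$, vanishes at $q_*$, and is negative on $(q_*, 1)$. By Lemma~\ref{lem:property-f}, $f > 0$ on $[\frac{1}{\sqrt 2}, \hat q)$ and $f < 0$ on $(\hat q, 1)$, with $\hat q \in (\frac{1}{\sqrt 2}, q_*)$. Combining these: on $(\frac{1}{\sqrt 2}, \hat q)$ both $2\mathrm{E} - \mathrm{K}$ and $f$ are positive so $g' > 0$; on $(\hat q, q_*)$ the factor $2\mathrm{E} - \mathrm{K}$ is still positive but $f < 0$, so $g' < 0$; on $(q_*, 1)$ both $2\mathrm{E} - \mathrm{K}$ and $f$ are negative, so $g' > 0$ again. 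Hence $g$ is strictly increasing on $[\frac{1}{\sqrt 2}, \hat q]$, strictly decreasing on $[\hat q, q_*]$, and strictly increasing on $[q_*, 1)$, so $\hat q$ is the unique local maximum and $q_*$ the unique local minimum. The value $g(q_*) = 0$ is immediate from $2\mathrm{E}(q_*) - \mathrm{K}(q_*) = 0$, and $g(\hat q) =: \hat\lambda \simeq 0.70107$ is then just a numerical evaluation using $\hat q \simeq 0.79257$.

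I do not anticipate a serious obstacle here; the statement is essentially a corollary of Lemma~\ref{lem:property-f} once the derivative identity \eqref{eq:diff-g} is verified, and the problem explicitly says the proof can be omitted for that reason. The only mildly delicate point is the algebra establishing \eqref{eq:diff-g}: one must be careful that the substitution of $\mathrm{K}'$ and $\mathrm{E}'$ is done with the correct conventions for the modulus (as opposed to the parameter $m = q^2$), since a factor-of-$q$ error would propagate through the whole identity. A secondary point worth a sentence is to confirm that the strict monotonicity claims are genuine: $g'$ does not merely have the right sign but is nonzero on the open intervals, which follows because on each of the three open subintervals neither $2\mathrm{E} - \mathrm{K}$ nor $f$ vanishes (their only zeros in $(\frac{1}{\sqrt 2}, 1)$ being $q_*$ and $\hat q$ respectively, both endpoints of the subintervals). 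Given the emphasis of the paper, I would state the derivative formula, cite Lemmas~\ref{lem:property-f} and~\ref{lem:elliptic_2E-K} for the sign pattern, and record the numerical values, keeping the whole argument to a few lines.
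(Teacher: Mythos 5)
Your proposal is correct and is exactly the argument the paper has in mind (the paper omits the proof of this lemma as routine, and formula \eqref{eq:diff-g} is indeed obtained by the product rule plus \eqref{eq:diff-elliptic-int}, after which the sign pattern follows from Lemmas~\ref{lem:property-f} and~\ref{lem:elliptic_2E-K}). The algebra checks out: the coefficient of $\mathrm{K}$ collects to $4q^4-5q^2+1$ and that of $\mathrm{E}$ to $-8q^4+8q^2-1$, reproducing $f$.
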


\begin{figure}[htbp]
    \centering
    \includegraphics[scale=0.22]{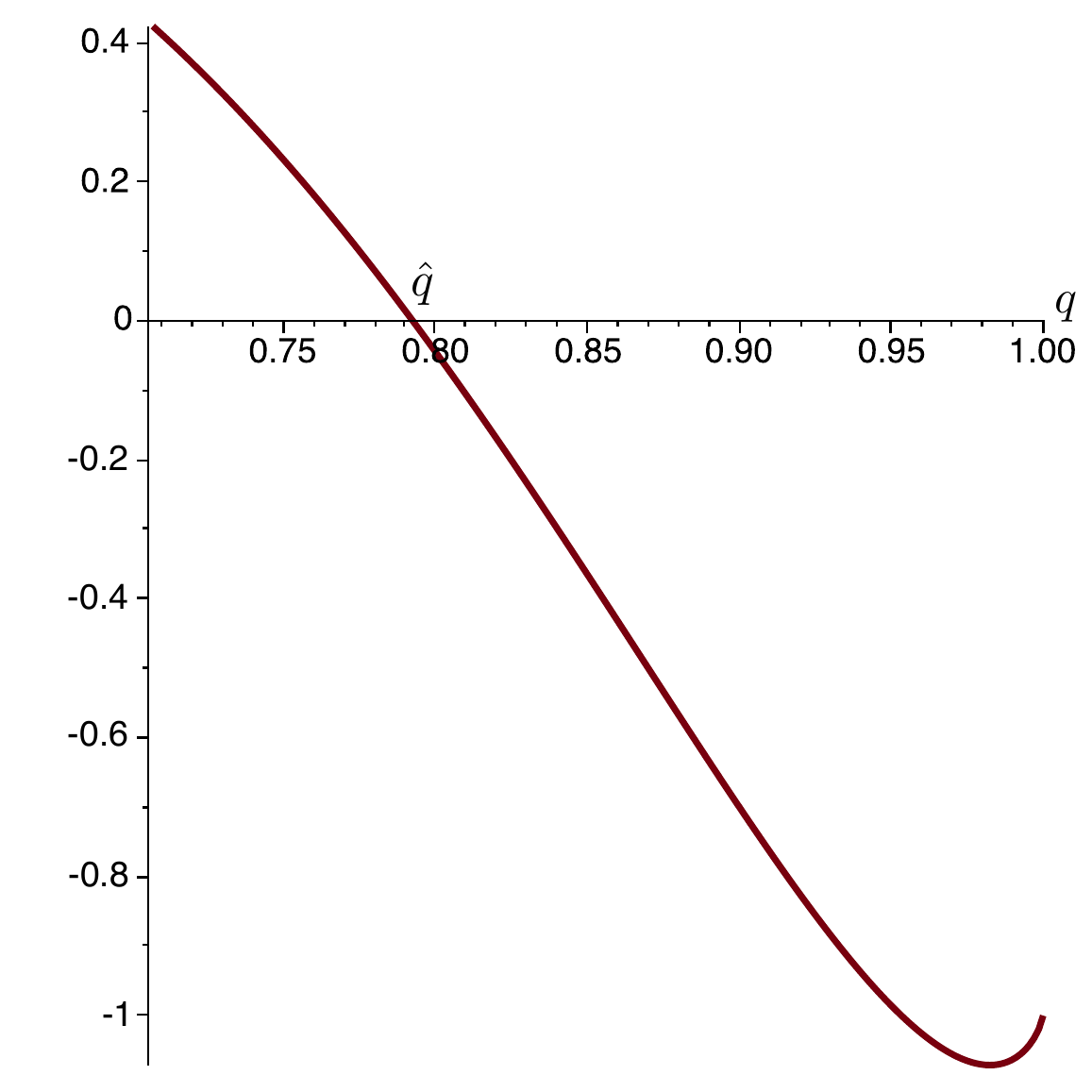}
    \hspace{20pt}
    \includegraphics[scale=0.22]{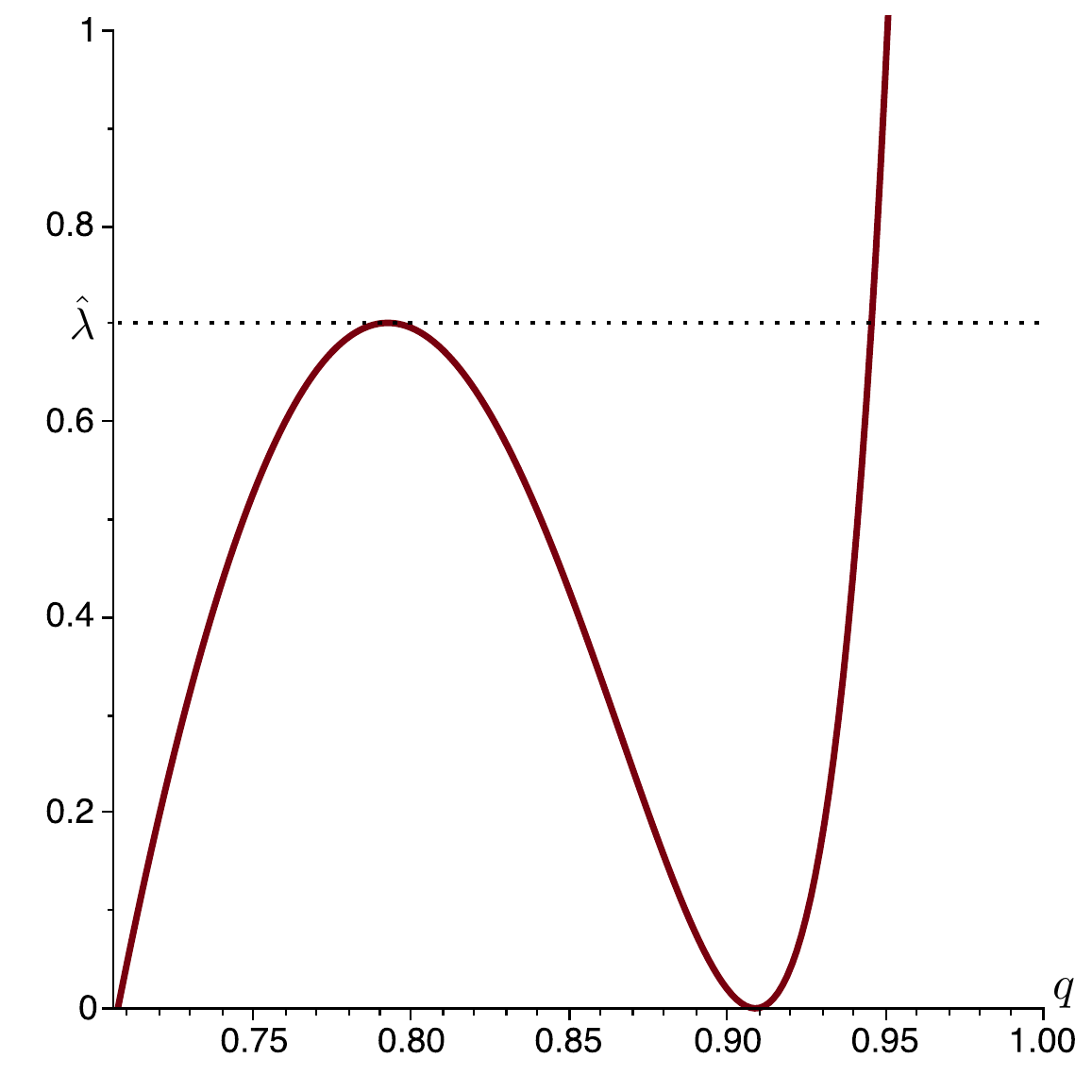}
    \caption{The graph of $f$ (left) and the graph of $g$ (right).}
\end{figure}


Next we analytically estimate $\hat{q}$. 
The proof is postponed to Appendix~\ref{sect:proof-fgeh}. 

\begin{lemma}\label{lem:hat-q-character}
${3}/{5} < \hat{q}^2 < {2}/{3}$.
\end{lemma}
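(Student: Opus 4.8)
The plan is to use the characterization of $\hat q$ as the unique root of $f$ in $(\tfrac{1}{\sqrt 2}, q_*)$ from Lemma~\ref{lem:property-f}. Since $f > 0$ on $[\tfrac{1}{\sqrt 2}, \hat q)$ and $f < 0$ on $(\hat q, 1)$, it suffices to exhibit two explicit values $q_1^2 = 3/5$ and $q_2^2 = 2/3$ such that $f(q_1) > 0$ and $f(q_2) < 0$; by the sign change and uniqueness of the root this forces $3/5 < \hat q^2 < 2/3$. So the proof reduces to two numerical sign verifications of $f$ at specific points, but done rigorously, i.e.\ with controlled bounds on the complete elliptic integrals $\mathrm{K}(q)$ and $\mathrm{E}(q)$.

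The main technical work is therefore to bound $\mathrm{K}(q)$ and $\mathrm{E}(q)$ from above and below at $q^2 = 3/5$ and $q^2 = 2/3$ with enough precision to pin down the signs of $f(q) = (4q^4 - 5q^2 + 1)\mathrm{K}(q) + (-8q^4 + 8q^2 - 1)\mathrm{E}(q)$. At $q^2 = 3/5$ the coefficient of $\mathrm{K}$ is $4\cdot\tfrac{9}{25} - 5\cdot\tfrac35 + 1 = \tfrac{36}{25} - 3 + 1 = -\tfrac{14}{25}$ and the coefficient of $\mathrm{E}$ is $-8\cdot\tfrac{9}{25} + 8\cdot\tfrac35 - 1 = -\tfrac{72}{25} + \tfrac{120}{25} - \tfrac{25}{25} = \tfrac{23}{25}$, so $f(q_1) = \tfrac{1}{25}(-14\,\mathrm{K}(q_1) + 23\,\mathrm{E}(q_1))$; I need $23\,\mathrm{E}(q_1) > 14\,\mathrm{K}(q_1)$, i.e.\ $\mathrm{K}(q_1)/\mathrm{E}(q_1) < 23/14 \approx 1.643$. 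At $q^2 = 2/3$ the coefficient of $\mathrm{K}$ is $4\cdot\tfrac49 - 5\cdot\tfrac23 + 1 = \tfrac{16}{9} - \tfrac{30}{9} + \tfrac99 = -\tfrac59$ and the coefficient of $\mathrm{E}$ is $-8\cdot\tfrac49 + 8\cdot\tfrac23 - 1 = -\tfrac{32}{9} + \tfrac{48}{9} - \tfrac99 = \tfrac79$, so $f(q_2) = \tfrac19(-5\,\mathrm{K}(q_2) + 7\,\mathrm{E}(q_2))$ and I need $7\,\mathrm{E}(q_2) < 5\,\mathrm{K}(q_2)$, i.e.\ $\mathrm{K}(q_2)/\mathrm{E}(q_2) > 7/5 = 1.4$.

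To obtain these bounds rigorously I would use either (a) the arithmetic–geometric mean iteration, which gives fast rigorous two-sided bracketing of $\mathrm{K}$, together with one Landen/AGM-type recursion for $\mathrm{E}$, or (b) elementary series/integral estimates: truncating the hypergeometric series $\mathrm{K}(q) = \tfrac{\pi}{2}\sum_n \big(\tfrac{(2n)!}{2^{2n}(n!)^2}\big)^2 q^{2n}$ and $\mathrm{E}(q) = \tfrac{\pi}{2}\sum_n \big(\tfrac{(2n)!}{2^{2n}(n!)^2}\big)^2 \tfrac{q^{2n}}{1-2n}$ after a few terms, with the tail controlled by a geometric majorant since $q^2 \le 2/3$. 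A couple of terms should already separate the ratios from the thresholds $23/14$ and $7/5$ comfortably; if not, one more term suffices. The only genuine obstacle is bookkeeping: making sure the truncation error estimates are on the correct side of the inequality and that the arithmetic with $\pi$ bounds (e.g.\ $3.1415 < \pi < 3.1416$) is done consistently. Since this is the kind of "straightforward calculation" the paper defers to the appendix, I would present it there in the same spirit: state the elliptic-integral bounds as a short sublemma, then plug in.
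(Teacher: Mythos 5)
Your proposal is correct, and its first half is exactly the paper's reduction: invoke Lemma~\ref{lem:property-f} so that it suffices to check $f\bigl(\sqrt{3/5}\bigr)>0$ and $f\bigl(\sqrt{2/3}\bigr)<0$, and your coefficient computations $f(q_1)=\tfrac1{25}\bigl(-14\,\mathrm{K}(q_1)+23\,\mathrm{E}(q_1)\bigr)$ and $f(q_2)=\tfrac19\bigl(-5\,\mathrm{K}(q_2)+7\,\mathrm{E}(q_2)\bigr)$ agree with the paper's. Where you genuinely diverge is the verification of the two signs. The paper avoids all numerics and instead uses the single integral inequality \eqref{eq:tech-Elliptic_int}, i.e.\ $\int_0^{\pi/2}(1-2\sin^2\theta)(1-q^2\sin^2\theta)^{-1/2}\,\mathrm{d}\theta<0$, which is equivalent to $\mathrm{K}(q)/\mathrm{E}(q)>2/(2-q^2)$; at $q^2=2/3$ this gives $\mathrm{K}/\mathrm{E}>3/2>7/5$ and settles the upper bound in one line, which is slicker than your series estimates. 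For the lower bound, however, that same inequality only bounds $\mathrm{K}/\mathrm{E}$ from \emph{below}, while what is needed is the upper bound $\mathrm{K}(q_1)/\mathrm{E}(q_1)<23/14$; the paper's displayed chain ends by asserting that $-\tfrac{14}{25}\int_0^{\pi/2}(-\tfrac12+\sin^2\theta)(1-\tfrac35\sin^2\theta)^{-1/2}\,\mathrm{d}\theta$ is positive, but this quantity equals $\tfrac{7}{25}\int_0^{\pi/2}(1-2\sin^2\theta)(1-\tfrac35\sin^2\theta)^{-1/2}\,\mathrm{d}\theta$, which is \emph{negative} by the very inequality \eqref{eq:tech-Elliptic_int} being cited, so the published argument for $3/5<\hat q^2$ does not close as written. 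Your rigorous-numerics route (AGM or truncated hypergeometric series with positive, geometrically dominated tails) is more laborious but is immune to this sign bookkeeping, and the margins are comfortable: the true ratios are $\mathrm{K}(q_1)/\mathrm{E}(q_1)\simeq1.44<23/14\simeq1.64$ and $\mathrm{K}(q_2)/\mathrm{E}(q_2)\simeq1.55>7/5$, so four or five series terms with an explicit tail bound suffice. If you want to match the paper's elegance on the upper bound, keep \eqref{eq:tech-Elliptic_int} there and reserve the quantitative estimates for the lower bound, where some genuinely new input is required.
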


Now we define key moduli for classification of penalized pinned elasticae. 

\begin{definition}\label{def:hat-q_lam}
Let $\hat{\lambda}:=g(\hat{q})$ be the constant given by \eqref{eq:def-hat_lambda}.
\begin{itemize}
    \item [(i)] For $c\in(0,\hat{\lambda}]$, let $q_1(c), q_2(c), q_3(c)$ be the solutions to $g(q)=c$ with 
    \[
    q_1(c)\in(\tfrac{1}{\sqrt{2}},\hat{q}], \quad q_2(c) \in [\hat{q}, q_*), \quad q_3(c)\in(q_*,1).
    \]
    We interpret $q_1(\hat{\lambda})=q_2(\hat{\lambda})=\hat{q}$.
    \item [(ii)] For $c>\hat{\lambda}$, let $q_3(c)\in(q_*,1)$ be a unique solution to $g(q)=c$.
\end{itemize}
\end{definition}

Using these moduli, we prepare terminology for the following critical points.
We will later show that these curves are indeed the only possibilities.

\begin{definition}[Shorter arc, longer arc, and loop]\label{def:sarc_larc_loop}
Let $\lambda>0$, $\ell>0$, and $n\in\mathbf{N}$ be given, and let $n_{\lambda,\ell}\in\mathbf{N}$ as in \eqref{eq:n-lambda-ell}.
\begin{itemize}
    \item[(i)] A curve $\gamma$ is called \emph{$(\lambda,\ell,n)$-shorter arc} if $n\geq n_{\lambda,\ell}$ and if, up to reflection, the arclength parametrization of $\gamma$ is given by 
    \begin{align}\label{def:gamma_sarc-PPE}
        \gamma_{\rm sarc}^{\lambda,\ell,n}(s)&:= \frac{1}{\alpha}
        \begin{pmatrix}
        2\mathrm{E}(\am(\alpha s-\mathrm{K}(q),q),q) + 2\mathrm{E}(q) - \alpha s \\
        2q\cn(\alpha s-\mathrm{K}(q),q)
        \end{pmatrix}, \quad \textrm{where} \\
        q&=q_{1,n}:=q_1\big(\tfrac{\lambda\ell^2}{n^2}), \quad  \alpha=\alpha_{1,n}:=\frac{2n}{\ell}\big(2\mathrm{E}(q_{1,n})-\mathrm{K}(q_{1,n}) \big), \label{eq:q-alpha-sarc}
    \end{align}
    with length $L[\gamma_{\rm sarc}^{\lambda,\ell,n}]=\frac{2n\mathrm{K}(q_{1,n})}{\alpha_{1,n}}$ and  signed curvature 
    \[k(s)=k_{\rm sarc}^{\lambda,\ell,n}(s):=-2\alpha_{1,n} q_{1,n} \cn(\alpha_{1,n} s-\mathrm{K}(q_{1,n}),q_{1,n}).\]
    \item [(ii)] A curve $\gamma$ is called \emph{$(\lambda,\ell,n)$-longer arc} if $n\geq n_{\lambda,\ell}$ and if, up to reflection, the arclength parametrization of $\gamma$ is given by 
    \begin{align}\label{def:gamma_larc-PPE}
        \gamma_{\rm larc}^{\lambda,\ell,n}(s)&:= \frac{1}{\alpha}
        \begin{pmatrix}
        2\mathrm{E}(\am(\alpha s-\mathrm{K}(q),q),q) + 2\mathrm{E}(q) - \alpha s \\
        2q\cn(\alpha s-\mathrm{K}(q),q)
        \end{pmatrix}, \quad \textrm{where} \\
        q&=q_{2,n}:=q_2\big(\tfrac{\lambda\ell^2}{n^2}), \quad \alpha=\alpha_{2,n}:=\frac{2n}{\ell}\big(2\mathrm{E}(q_{2,n})-\mathrm{K}(q_{2,n}) \big), \notag
    \end{align}
    with length $L[\gamma_{\rm larc}^{\lambda,\ell,n}]=\frac{2n\mathrm{K}(q_{2,n})}{\alpha_{2,n}}$ and  signed curvature 
    \[k(s)=k_{\rm larc}^{\lambda,\ell,n}(s)=-2\alpha_{2,n} q_{2,n} \cn(\alpha_{2,n} s-\mathrm{K}(q_{2,n}),q_{2,n}).\]
    \item[(iii)] A curve $\gamma$ is called \emph{$(\lambda,\ell,n)$-loop} if, up to reflection, the arclength parametrization of $\gamma$ is given by 
    \begin{align}\label{def:gamma_loop-PPE}
        \gamma_{\rm loop}^{\lambda,\ell,n}(s)&:= \frac{1}{\alpha}
        \begin{pmatrix}
        -2\mathrm{E}(\am(\alpha s-\mathrm{K}(q),q),q) - 2\mathrm{E}(q) + \alpha s \\
        2q\cn(\alpha s-\mathrm{K}(q),q)
        \end{pmatrix},  \quad \textrm{where}  \\
        q&=q_{3,n}:=q_3\big(\tfrac{\lambda\ell^2}{n^2}), \quad \alpha=\alpha_{3,n}:=\frac{2n}{\ell}\big(\mathrm{K}(q_{3,n}) -2\mathrm{E}(q_{3,n})\big), \notag
        \end{align}
    with length $L[\gamma_{\rm loop}^{\lambda,\ell,n}]=\frac{2n\mathrm{K}(q_{3,n})}{\alpha_{3,n}}$ and  signed curvature 
    \[k(s)=k_{\rm loop}^{\lambda,\ell,n}(s)=2\alpha_{3,n} q_{3,n} \cn(\alpha_{3,n} s-\mathrm{K}(q_{3,n}),q_{3,n}).\]
\end{itemize}
\end{definition}

In what follows we show Theorem~\ref{thm:classification-PPE} to verify that any penalized pinned elastica is either a $(\lambda,\ell,n)$-shorter arc, a $(\lambda,\ell,n)$-longer arc, or a $(\lambda,\ell,n)$-loop. 

\if0
\begin{theorem}[Classification for penalized pinned elasticae]\label{thm:classification-PPE2}
Let $\hat{q}\in [\frac{1}{\sqrt{2}}, 1)$ be a unique solution of $f(q)=0$ and let $\hat{\lambda}>0$ be the constant given by \eqref{eq:def-hat_lambda}.
Let $\gamma \in A_\ell$ be a penalized pinned elastica. 
Then, $\gamma$ is a trivial line segment; otherwise, up to reflection, the arclength parametrization of $\gamma$ is represented by one of the following: 
\begin{itemize}
    \item[(i)] For some integer $n \geq n_{\lambda,\ell}$, 
    \begin{align}\label{def:gamma_sarc-PPE}
        \gamma_{\rm sarc}^{\lambda,\ell,n}(s)&:= \frac{1}{\alpha}
        \begin{pmatrix}
        2\mathrm{E}(\am(\alpha s-\mathrm{K}(q),q),q) + 2\mathrm{E}(q) - \alpha s \\
        2q\cn(\alpha s-\mathrm{K}(q),q)
        \end{pmatrix},  \\
        q&=q_{1,n}:=q_1\big(\tfrac{\lambda\ell^2}{n^2}), \quad \alpha=\alpha_{1,n}:=\frac{2n}{\ell}\big(2\mathrm{E}(q_{1,n})-\mathrm{K}(q_{1,n}) \big). \label{eq:q-alpha-sarc}
    \end{align}
    In this case, $L[\gamma_{\rm sarc}^{\lambda,\ell,n}]=\frac{2n\mathrm{K}(q_{1,n})}{\alpha_{1,n}}$, $k(s)=k_{\rm sarc}^{\lambda,\ell,n}(s):=-2\alpha_{1,n} q_{1,n} \cn(\alpha_{1,n} s-\mathrm{K}(q_{1,n}),q_{1,n})$, and 
    \begin{align}\label{eq:energy-sarc}
    \mathcal{E}_\lambda[\gamma_{\rm sarc}^{\lambda,\ell,n}] &= \frac{8}{\ell}n^2 \big(2\mathrm{E}(q_{1,n})-\mathrm{K}(q_{1,n}) \big) \big( (4q_{1,n}^2 -3 ) \mathrm{K}(q_{1,n})+2\mathrm{E}(q_{1,n}) \big).
    \end{align}
    \item[(ii)] For some integer $n \geq n_{\lambda,\ell}$, 
    \begin{align}\label{def:gamma_larc-PPE}
        \gamma_{\rm larc}^{\lambda,\ell,n}(s)&:= \frac{1}{\alpha}
        \begin{pmatrix}
        2\mathrm{E}(\am(\alpha s-\mathrm{K}(q),q),q) + 2\mathrm{E}(q) - \alpha s \\
        2q\cn(\alpha s-\mathrm{K}(q),q)
        \end{pmatrix},  \\
        q&=q_{2,n}:=q_2\big(\tfrac{\lambda\ell^2}{n^2}), \quad \alpha=\alpha_{2,n}:=\frac{2n}{\ell}\big(2\mathrm{E}(q_{2,n})-\mathrm{K}(q_{2,n}) \big).
    \end{align}
    In this case, $L[\gamma_{\rm larc}^{\lambda,\ell,n}]=\frac{2n\mathrm{K}(q_{2,n})}{\alpha_{2,n}}$, $k(s)=k_{\rm larc}^{\lambda,\ell,n}(s)=-2\alpha_{2,n} q_{2,n} \cn(\alpha_{2,n} s-\mathrm{K}(q_{2,n}),q_{2,n})$, and  
    \begin{align}\label{eq:energy-larc}
    \mathcal{E}_\lambda[\gamma_{\rm larc}^{\lambda,\ell,n}] &= \frac{8}{\ell}n^2 \big(2\mathrm{E}(q_{2,n})-\mathrm{K}(q_{2,n}) \big) \big( (4q_{2,n}^2 -3 ) \mathrm{K}(q_{2,n})+2\mathrm{E}(q_{2,n}) \big).
    \end{align}
    \item[(iii)] For some integer $n \geq 1$, 
    \begin{align}\label{def:gamma_loop-PPE}
        \gamma_{\rm loop}^{\lambda,\ell,n}(s)&:= \frac{1}{\alpha}
        \begin{pmatrix}
        -2\mathrm{E}(\am(\alpha s-\mathrm{K}(q),q),q) - 2\mathrm{E}(q) + \alpha s \\
        2q\cn(\alpha s-\mathrm{K}(q),q)
        \end{pmatrix},  \\
        q&=q_{3,n}:=q_3\big(\tfrac{\lambda\ell^2}{n^2}), \quad \alpha=\alpha_{3,n}:=\frac{2n}{\ell}\big(\mathrm{K}(q_{3,n}) -2\mathrm{E}(q_{3,n})\big). \label{eq:q-alpha-loop}
    \end{align}
    In this case, $L[\gamma_{\rm loop}^{\lambda,\ell,n}]=\frac{2n\mathrm{K}(q_{3,n})}{\alpha_{3,n}}$, $k(s)=k_{\rm loop}^{\lambda,\ell,n}(s)=2\alpha_{3,n} q_{3,n} \cn(\alpha_{3,n} s-\mathrm{K}(q_{3,n}),q_{3,n})$, and
    \begin{align}\label{eq:energy-loop}
    \mathcal{E}_\lambda[\gamma_{\rm loop}^{\lambda,\ell,n}] &= \frac{8}{\ell}n^2 \big(\mathrm{K}(q_{3,n}) -2\mathrm{E}(q_{3,n})\big) \big( (4q_{3,n}^2 -3 ) \mathrm{K}(q_{3,n})+2\mathrm{E}(q_{3,n}) \big).
    \end{align}
\end{itemize}
\end{theorem}
\fi

\begin{proof}[Proof of Theorem~\ref{thm:classification-PPE}]
Let $\gamma \in A_\ell$  be a penalized pinned elastica and $k$ be the signed curvature of $\gamma$.
Recall from Lemma~\ref{lem:EL-PPE} that $k$ satisfies \eqref{eq:planar-EL} and \eqref{eq:pinned_BC}.
Then $k\equiv0$ is a trivial solution to \eqref{eq:planar-EL} and \eqref{eq:pinned_BC}, so in the following we consider the other solutions. 
By \cite[Proposition~3.3]{Lin96}, any solution to the initial value problem for \eqref{eq:planar-EL} with $k(0)=0$ is given by 
\begin{align}\label{eq:kappa-to_be_determined}
k(s)=2\sigma\alpha q\cn(\alpha s+ s_0, q)
\end{align}
for some $\sigma\in\{+,-\}$, $\alpha\geq0$, $s_0\in \mathbf{R}$, and $q\in[0,1)$ such that
\begin{align}\label{eq:lambda-Linner}
    2\alpha^2(2q^2-1)=\lambda. 
\end{align}
Since $\lambda > 0$ we infer that $q > \frac{1}{\sqrt{2}}$. 
By \eqref{eq:pinned_BC} and antiperiodicity of $\cn$ (cf.\ Proposition~\ref{prop:cn}), we may choose $s_0=-\mathrm{K}(q)$ in \eqref{eq:kappa-to_be_determined}.
Denote $L:=L[\gamma]$.
In addition, we deduce from Lemma~\ref{lem:EL-PPE} that $k(L)=\cn(\alpha L-\mathrm{K}(q),q)=0$, which implies that 
\begin{align}\label{eq:length-PPE-n}
    L=\frac{2n}{\alpha}\mathrm{K}(q) \quad \text{for some} \quad n\in\mathbf{N}.
\end{align}
Next we use the boundary condition in the definition of $A_\ell$, cf.\ \eqref{eq:defA_ell}, to gain more information about the parameters.
We see from \eqref{eq:kappa-to_be_determined} that 
\begin{align*}
\int_0^s k(t)\;\mathrm{d}t
&= \int_{-\mathrm{K}(q)}^{\alpha s-\mathrm{K}(q)}  2\sigma q\cn(t,q) \;\mathrm{d}t \\
&=2\sigma\arcsin{\big(q\sn(\alpha s -\mathrm{K}(q), q) \big)} +2\sigma\arcsin{q}, 
\end{align*}
where we used the well known integral formula of $\cn$ (cf.\ \eqref{eq:int-cn}).
This implies that, up to rotation, the tangential angle $\theta$ of $\gamma$  is given by
\begin{align}\label{eq:theta_pm}
\theta(s)= 2\sigma\arcsin{\big(q\sn(\alpha s -\mathrm{K}(q), q) \big)}.
\end{align}
Then, up to rotation and translation, the arclength parametrization $\tilde{\gamma}$ of $\gamma$ is given by 
\begin{align}\label{eq:gamma-before-rotate}
\int_0^s 
\begin{pmatrix}
\cos{\theta(t)} \\
\sin{\theta(t)}
\end{pmatrix}
\mathrm{d}t
=:
\begin{pmatrix}
X_{\sigma,\alpha,q}(s) \\
Y_{\sigma,\alpha,q}(s)
\end{pmatrix}
.
\end{align}
Now we compute 
\begin{align}
\begin{split}\label{eq:Y_+}
    Y_{\sigma,\alpha,q}(s)&=\int_0^s 2 \sin\tfrac{\theta(t)}{2}\cos\tfrac{\theta(t)}{2} \;\mathrm{d}t \\
    &=\int_0^s 2 \sigma q\sn(\alpha t-\mathrm{K}(q),q)\sqrt{1-q^2\sn(\alpha t-\mathrm{K}(q),q)^2} \;\mathrm{d}t \\
    &=\sigma\frac{2}{\alpha}\int_{-\mathrm{K}(q)}^{\alpha s-\mathrm{K}(q)} q\sn(t,q)\sqrt{1-q^2\sn(t,q)^2} \;\mathrm{d}t \\
    &=-\sigma\frac{2q}{\alpha}\cn(\alpha s-\mathrm{K}(q),q),
    \end{split}
\end{align}
where we used \eqref{eq:int-sn(1-q^2sn^2)} in the last equality.
This together with \eqref{eq:length-PPE-n} implies $Y_{\sigma,\alpha,q}(0) = Y_{\sigma,\alpha,q}(L)=0$.
Next we compute
\begin{align}
\begin{split}\label{eq:X_+_compute}
    X_{\sigma,\alpha,q}(s)&=\int_0^s \big( 1-2\sin^2\tfrac{\theta(t)}{2} \big)\;\mathrm{d}t
    =\int_0^s \big( 1-2q^2\sn(\alpha t-\mathrm{K}(q),q)^2\big)\;\mathrm{d}t \\
    &=\frac{1}{\alpha}\int_{-\mathrm{K}(q)}^{\alpha s-\mathrm{K}(q)} \big( 1-2q^2\sn(t,q)^2\big)\;\mathrm{d}t\\
    &=\frac{1}{\alpha}\Big(2\mathrm{E}(\am(\alpha s-\mathrm{K}(q),q),q) +2\mathrm{E}(q) - \alpha s \Big),
\end{split}
\end{align}
where we used \eqref{eq:int-1-sn^2} in the last equality.
Thus we obtain $X_{\sigma,\alpha,q}(0)=0$.
By \eqref{eq:length-PPE-n} and the fact that $\mathrm{E}(\am(m\mathrm{K}(q),q),q)=m\mathrm{E}(q)$ for all $m\in\mathbf{Z}$, we also obtain 
\begin{align}\label{eq:X_+}
    X_{\sigma,\alpha,q}(L)=\frac{2n}{\alpha}(2\mathrm{E}(q)-\mathrm{K}(q)).
\end{align}
Note that $Y_{\sigma,\alpha,q}(L)=0$ follows from \eqref{eq:Y_+}. 
Observe that by Lemma~\ref{lem:elliptic_2E-K} $2\mathrm{E}(q)-\mathrm{K}(q)>0$ if $q\in(0,q_*)$ and $2\mathrm{E}(q)-\mathrm{K}(q)<0$ if $q\in(q_*,1)$. 
Then, combining the boundary condition $\gamma(1)=\tilde{\gamma}(L)=(\ell,0)$ with  \eqref{eq:gamma-before-rotate} and \eqref{eq:X_+}, we obtain
\begin{align*}
\tilde{\gamma}(s)=A 
\begin{pmatrix}
    X_{\sigma,\alpha,q}(s) \\
    Y_{\sigma,\alpha,q}(s)
\end{pmatrix}
, \quad A=
\begin{cases}
     I, \quad &\text{if } q\in(0,q_*) \\
     -I, \quad &\text{if } q\in(q_*,1)
\end{cases},
\end{align*}
and observe that
\begin{align}\label{eq:scaling-a}
\frac{2n}{\alpha}\big|2\mathrm{E}(q)-\mathrm{K}(q)\big| =\ell
\end{align}
is necessary to hold. 
Combining this together with \eqref{eq:lambda-Linner}, we see that
\begin{align}\label{eq:cond-q-penapinned}
\lambda\ell^2 = 8n^2\big(2\mathrm{E}(q)-\mathrm{K}(q) \big)^2(2q^2-1)=n^2g(q),
\end{align}
where $g$ is the function defined by \eqref{eq:def-g}.
Thus $q\in (\frac{1}{\sqrt{2}},1)$ needs to be either 
\[q_1(\tfrac{\lambda\ell^2}{n^2}) \in (\tfrac{1}{\sqrt{2}},\hat{q}), \quad q_2(\tfrac{\lambda\ell^2}{n^2}) \in (\hat{q}, q_*), \quad \text{or} \quad  q_3(\tfrac{\lambda\ell^2}{n^2})\in(q_*,1).
\]
By Lemma~\ref{lem:property-g} $n \geq n_{\lambda, \ell}$ is necessary if $q$ is either $q_1(\frac{\lambda\ell^2}{n^2})$ or $q_2(\frac{\lambda\ell^2}{n^2})$.

In summary, if $\gamma$ is a penalized pinned elastica except for a trivial one, then for some $n \in \mathbf{N}$ $\tilde{\gamma}$ is given by 
\begin{align}
\tilde{\gamma}(s)&=
\begin{pmatrix}
    X_{\sigma,\alpha,q}(s) \\
    Y_{\sigma,\alpha,q}(s)
\end{pmatrix}
 \text{ with } q=q_1(\tfrac{\lambda\ell^2}{n^2}) \ \text{ or }\ q_2(\tfrac{\lambda\ell^2}{n^2}), \quad \text{or} \quad \notag\\
 \tilde{\gamma}(s)&=-
\begin{pmatrix}
    X_{\sigma,\alpha,q}(s) \\
    Y_{\sigma,\alpha,q}(s)
\end{pmatrix}
\text{ with } q=q_3(\tfrac{\lambda\ell^2}{n^2}), \notag
\end{align} 
where $\alpha>0$ is given by \eqref{eq:scaling-a}.
This yields the desired formulae: In \eqref{def:gamma_sarc-PPE}, \eqref{def:gamma_larc-PPE}, and \eqref{def:gamma_loop-PPE}, we chose the sign $\sigma$ so that $\tilde{\gamma}$ lies in the upper-half plane, i.e., 
\begin{align}\label{eq:choice-sigma}
\sigma= - \ \text{ if }\  q=q_1(\tfrac{\lambda}{n^2}) \ \text{ or }\ q_2(\tfrac{\lambda}{n^2}), \quad 
\sigma= + \ \text{ if }\  q=q_3(\tfrac{\lambda}{n^2}). 
\end{align} 
\if0
We turn to the formulae of the energy $\mathcal{E}_\lambda$.
In the following we only demonstrate case (i) since the other cases can be deduced in the same way.
Let $n\geq n_{\lambda,\ell}$, $\alpha=\alpha_{1,n}$, and $q=q_1(\frac{\lambda\ell^2}{n^2})$.
Since
\begin{align}
\begin{split}\label{eq:B-energy-PPE-lambda}
B[\gamma_{\rm sarc}^{\lambda,\ell,n}]
&=\int_{0}^L |k_{\rm sarc}^{\lambda,\ell,n}(s)|^2 \; \mathrm{d}s
=\int_0^{\frac{2n\mathrm{K}(q)}{\alpha}} |2aq\cn(\alpha s-\mathrm{K}(q)), q)|^2 \; \mathrm{d}s \\
&=4\alpha q^2 \int_{-\mathrm{K}(q)}^{(2n-1)\mathrm{K}(q)} |\cn(x, q)|^2 \; \mathrm{d}x \\
&=8n\alpha q^2 \cdot \frac{q^2 \mathrm{K}(q)-\mathrm{K}(q)+\mathrm{E}(q)}{q^2} \\
&=8n\alpha (q^2 \mathrm{K}(q)-\mathrm{K}(q)+\mathrm{E}(q) ), 
\end{split}
\end{align}
combining this with $L[\gamma_{\rm sarc}^{\lambda,\ell,n}]=2n{\mathrm{K}(q)}/{\alpha}$, we see that
\begin{align}\label{eq:energy-PPE-lambda}
&B[\gamma_{\rm sarc}^{\lambda,\ell,n}] + \lambda L[\gamma_{\rm sarc}^{\lambda,\ell,n}] \\
= &\ 8n\alpha (q^2 \mathrm{K}(q)-\mathrm{K}(q)+\mathrm{E}(q) ) + 2n\lambda \frac{\mathrm{K}(q)}{\alpha} \notag\\
= &\ \frac{8}{\ell}n^2 \big|2\mathrm{E}(q)-\mathrm{K}(q) \big|\Big( 2\big( q^2 \mathrm{K}(q)-\mathrm{K}(q)+\mathrm{E}(q) \big) +(2q^2-1)\mathrm{K}(q) \Big), \notag
\end{align}
where we used \eqref{eq:lambda-Linner} and \eqref{eq:scaling-a} in the last equality.
Thus we obtain \eqref{eq:energy-sarc}.
\fi
The proof is complete.
\end{proof}

\begin{remark}\label{rem:m/2-fold}
The curves $\gamma_{\rm sarc}^{\lambda,\ell,n}$, $\gamma_{\rm larc}^{\lambda,\ell,n}$, and $\gamma_{\rm loop}^{\lambda,\ell,n}$ obtained in Theorem~\ref{thm:classification-PPE} are $\frac{n}{2}$-fold well-periodic curves in terms of \cite[Definition 2.6]{MY_Crelle}.
This fact will be used when we discuss the instability (see Subsection~\ref{sect:instability_cut-paste}). 
\end{remark}

\subsection{Properties of penalized pinned elasticae}
Below we report on some geometric properties of penalized pinned elasticae, which can be deduced by explicit formulae in Theorem~\ref{thm:classification-PPE}.

\begin{lemma}[Symmetry of penalized pinned elasticae]\label{lem:symmetry-PPE}
Let $\gamma$ be either $\gamma_{\rm sarc}^{\lambda,\ell,1}$, $\gamma_{\rm larc}^{\lambda,\ell,1}$, or $\gamma_{\rm loop}^{\lambda,\ell,1}$ and denote $L:=L[\gamma]$.
Then, $\gamma$ is reflectionally symmetric in the sense that $\gamma=:(X,Y)$ satisfies
\begin{align}\label{eq:symmetry-PPE}
X(s)+X(L-s)=\ell, \quad Y(s)=Y(L-s), \quad \text{for}\ \ s\in [0,L].
\end{align}
In addition, if $\gamma=\gamma_{\rm loop}^{\lambda,\ell,1}$, then $\gamma$ has a self-intersection, i.e., there is $s\in(0,\frac{L}{2})$ such that $\gamma(s)=\gamma(L-s)$.
\end{lemma}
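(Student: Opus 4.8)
The plan is to read off both claims directly from the explicit arclength parametrizations given in Definition~\ref{def:sarc_larc_loop}, using the symmetry properties of the Jacobi elliptic functions collected in the appendix. I treat the three curves simultaneously, since all three have the same structural form
\[
\gamma(s)=\frac{1}{\alpha}\begin{pmatrix} \pm\big(2\mathrm{E}(\am(\alpha s-\mathrm{K}(q),q),q)+2\mathrm{E}(q)-\alpha s\big) \\ 2q\cn(\alpha s-\mathrm{K}(q),q)\end{pmatrix},
\]
with $L=\frac{2\mathrm{K}(q)}{\alpha}$ (the case $n=1$). Writing $u:=\alpha s-\mathrm{K}(q)$, so that $s\mapsto L-s$ corresponds to $u\mapsto -u$, the $Y$-component identity $Y(s)=Y(L-s)$ is immediate from the evenness of $\cn(\cdot,q)$. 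For the $X$-component I would compute $X(s)+X(L-s)$: the affine term $-\alpha s$ contributes $-\alpha(s+(L-s))=-\alpha L=-2\mathrm{K}(q)$, which combines with the two constant $2\mathrm{E}(q)$ terms to give $4\mathrm{E}(q)-2\mathrm{K}(q)$ (up to the sign $\pm$), while for the remaining terms I need the oddness relation $\mathrm{E}(\am(u,q),q)+\mathrm{E}(\am(-u,q),q)=0$, which follows from $\am(-u,q)=-\am(u,q)$ and oddness of $\mathrm{E}(\cdot,q)$. Hence $X(s)+X(L-s)=\frac{1}{\alpha}\cdot(\pm)(4\mathrm{E}(q)-2\mathrm{K}(q))$, and recalling the normalization \eqref{eq:scaling-a}, $\frac{2}{\alpha}|2\mathrm{E}(q)-\mathrm{K}(q)|=\ell$, together with the sign conventions chosen in the definitions (so that the right endpoint is $(\ell,0)$ with $\ell>0$), this equals $\ell$. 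This proves \eqref{eq:symmetry-PPE}.

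For the self-intersection claim when $\gamma=\gamma_{\rm loop}^{\lambda,\ell,1}$, the symmetry \eqref{eq:symmetry-PPE} already tells us that $\gamma(s)=\gamma(L-s)$ is equivalent to $X(s)=X(L-s)$ (the $Y$-coordinates always agree), i.e.\ to $X(s)=\frac{\ell}{2}$. So it suffices to exhibit some $s_0\in(0,\tfrac{L}{2})$ with $X(s_0)=\tfrac{\ell}{2}$; equivalently, with $\tilde X(u):=-\big(2\mathrm{E}(\am(u,q),q)-\frac{2\mathrm{E}(q)}{\mathrm{K}(q)}u\big)$ (after pulling out constants), one needs $\tilde X$ to attain the value $0$ at some interior point, which by the symmetry $X(\tfrac{L}{2})=\tfrac{\ell}{2}$ and the behaviour near the endpoints amounts to showing $X$ is \emph{not} monotone on $[0,\tfrac L2]$. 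This is where $q=q_{3,1}\in(q_*,1)$ enters decisively: the tangential angle is $\theta(s)=2\arcsin(q\sn(\alpha s-\mathrm{K}(q),q))$ (from \eqref{eq:theta_pm}, with the loop's sign), whose total turning over $[0,L]$ is $2(\arcsin q - \arcsin(-q)) = 4\arcsin q$; since $q>q_*>\tfrac{1}{\sqrt2}$ we get total turning exceeding $4\arcsin\tfrac{1}{\sqrt2}=\pi$, and in fact one should check $\cos\theta=1-2q^2\sn^2$ changes sign, so $X'$ changes sign on $[0,\tfrac L2]$. Thus $X$ increases then decreases (or vice versa) on $[0,\tfrac L2]$, forcing it to take the value $X(\tfrac L2)=\tfrac{\ell}{2}$ also at some $s_0\in(0,\tfrac L2)$, giving the self-intersection $\gamma(s_0)=\gamma(L-s_0)$.

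The routine part is the elliptic-function bookkeeping in the first two paragraphs; the only genuine point requiring care is the last one — verifying that $q_{3,1}>q_*$ really does force $\cos\theta=1-2q^2\sn^2(\cdot,q)$ to change sign on a half-period, equivalently that $q^2$ exceeds the running maximum needed for $X'<0$ somewhere, so that the $X$-profile is non-monotone. I expect this to be the main (though still short) obstacle: one must rule out the degenerate possibility that $X$ is monotone and $\gamma$ is a simple arc, which is precisely what distinguishes the loop from the two arcs. Everything else follows from the parity relations for $\cn$, $\sn$, $\am$, and $\mathrm{E}(\cdot,q)$ recorded in Appendix~\ref{sect:elliptic_functions} and from \eqref{eq:scaling-a}.
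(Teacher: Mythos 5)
Your symmetry argument is correct and is essentially the paper's: evenness of $\cn(\cdot,q)$ handles $Y$, oddness of $\am(\cdot,q)$ and $\mathrm{E}(\cdot,q)$ together with $L=\tfrac{2\mathrm{K}(q)}{\alpha}$ and the normalization \eqref{eq:scaling-a} handle $X$. The reduction of the self-intersection claim to finding $s_0\in(0,\tfrac{L}{2})$ with $X(s_0)=\tfrac{\ell}{2}$ is also the paper's reduction.

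The gap is in the final step. Your criterion --- that $1-2q^2\sn^2(\alpha s-\mathrm{K}(q),q)$ changes sign on $[0,\tfrac{L}{2}]$, hence $X$ is non-monotone there --- holds for \emph{every} $q>\tfrac{1}{\sqrt{2}}$, in particular for $\gamma_{\rm sarc}^{\lambda,\ell,1}$ and $\gamma_{\rm larc}^{\lambda,\ell,1}$ as well (there $X'(0)=1-2q^2<0$ and $X'(\tfrac{L}{2})=1>0$), and those curves are embedded: in the proof of Lemma~\ref{lem:embeddedness_omega} the same $X$ is shown to be convex and to stay \emph{below} $\tfrac{\ell}{2}$ on $(0,\tfrac{L}{2})$. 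So non-monotonicity of $X$ cannot by itself force an interior crossing of the level $\tfrac{\ell}{2}$; your parenthetical ``or vice versa'' is exactly the case in which the conclusion fails. What distinguishes the loop is not that $q_3(\lambda\ell^2)>q_*$ makes $1-2q^2\sn^2$ change sign, but that it forces the overall rotation by $-I$ (the tangential angle is $\pi+2\arcsin(q\sn(\cdot))$, not $2\arcsin(q\sn(\cdot))$), so that $X'=2q^2\sn^2-1$ and in particular $X'(\tfrac{L}{2})=-1<0$. Then $X$ exceeds $\tfrac{\ell}{2}$ just to the left of $\tfrac{L}{2}$ while $X(0)=0$, and the intermediate value theorem produces the desired $s_0$. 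This is precisely the paper's argument (it computes $\theta_{\rm loop}^{\lambda,\ell,1}(\tfrac{L}{2})=\pi$). To repair your proof, replace the sign-change observation by the computation of $X'(\tfrac{L}{2})$, or of $\theta(\tfrac{L}{2})$, with the correct loop sign.
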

\begin{proof}
In the interest of brevity we only demonstrate the proof of the case of $\gamma=\gamma_{\rm loop}^{\lambda,\ell,1}=:(X, Y)$ since the argument is fairly parallel in the other cases. 
We deduce from \eqref{def:gamma_loop-PPE} that, for $q=q_{3,1}$ and $\alpha=\alpha_{3,1}$, 
\begin{align*}
    Y(L-s)=\frac{2q}{\alpha}\cn(-\alpha s+\mathrm{K}(q),q)=\frac{2q}{\alpha}\cn(\alpha s-\mathrm{K}(q),q)=Y(s)
\end{align*}
for all $s\in[0,L]$, where we used the evenness of $\cn(\cdot,q)$. 
We also apply the oddness of $\am(\cdot,q)$ and $\mathrm{E}(\cdot,q)$ to obtain 
$
    X(L-s)
    =\frac{2}{\alpha}\mathrm{E}(\am(\alpha s -\mathrm{K}(q),q),q) - \frac{2}{\alpha}\mathrm{E}(q) + \frac{2}{\alpha}\mathrm{K}(q) -s. 
$
Since $\alpha=\alpha_{3,1}=\frac{2}{\ell}(\mathrm{K}(q)-2\mathrm{E}(q))$, we have 
\[
X(L-s)+X(s) = -\frac{4}{\alpha}\mathrm{E}(q) + \frac{2}{\alpha}\mathrm{K}(q) = \ell, \quad \textrm{for all } \ s\in[0,L].
\]

It remains to check that $\gamma_{\rm loop}^{\lambda,\ell,1}$ has a self-intersection.
By reflectional symmetry, it suffices to find $s\in (0,\tfrac{L}{2})$ such that $X(s)=\frac{\ell}{2}$.
Here recall from \eqref{eq:theta_pm} and \eqref{eq:choice-sigma} that the tangential angle of $\gamma_{\rm loop}^{\lambda,\ell,1}$ is given by 
\begin{align*}
    \theta_{\rm loop}^{\lambda,\ell,1}(s) := \pi + 2\arcsin{\big(q\sn(\alpha s -\mathrm{K}(q), q) \big)}.
\end{align*}
Combining this with the fact that $L=\frac{2\mathrm{K}(q)}{\alpha}$, we have 
\begin{align*}
    \theta_{\rm loop}^{\lambda,\ell,1}(\tfrac{L}{2})= \pi.
\end{align*}
This implies that $X'(\frac{L}{2})<0$. 
Moreover, since $X(0)=0$ and $X(\tfrac{L}{2})=\frac{\ell}{2}$, we deduce from the intermediate value theorem that there exists $s\in (0,\tfrac{L}{2})$ such that $X(s)=\frac{\ell}{2}$.
The proof is complete.
\end{proof}


\section{Stability}\label{sect:stability}

In this section we address the question of stability of the penalized pinned elasticae found in Theorem \ref{thm:classification-PPE}. 
As mentioned in the introduction, in this paper stability means local minimality of $\mathcal{E}_\lambda$ in $A_\ell$.

\subsection{Stability of one-mode arcs}
Here we focus on the stability of all penalized pinned elasticae with parameter $n = 1.$

The following lemma ensures that under certain conditions it suffices to investigate the sign of the second derivative along \emph{one particular} perturbation.

\begin{lemma}\label{lem:stability-condition}
Let $\gamma\in A_\ell$.
Assume that there exists a perturbation $\{\gamma_q\}\subset A_\ell$ of $\gamma$ such that $\gamma_{q_0}=\gamma$ for some $q_0\in(0,1)$, such that $\frac{d}{dq}\mathcal{E}_\lambda[\gamma_q]|_{q=q_0} =0$, 
\begin{align}
&\frac{d^2}{dq^2}\mathcal{E}_\lambda[\gamma_q] \Big|_{q=q_0} >0, \label{eq:2nd-derivative-general}
\end{align}
and the following properties hold:
\begin{itemize}
    \item[(i)] the map $(0,1)\ni q \mapsto L[\gamma_q] \in (\ell,\infty)$ is continuous and bijective; 
    \item[(ii)] for each $q\in(0,1)$, a curve $\gamma_q$ is a global minimizer of $B$ in 
    \[
    \Set{\gamma\in W^{2,2}_{\rm imm}(0,1; \mathbf{R}^2) | \gamma(0)=(0,0), \ \gamma(1)=(\ell,0), \ L[\gamma]=L[\gamma_q]}.
    \]
\end{itemize}
Then, $\gamma$ is a stable penalized pinned elastica (i.e. a local minimizer of $\mathcal{E}_\lambda$ in $A_\ell$). 
\end{lemma}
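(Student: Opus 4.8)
The plan is to exploit the special structure of the one-parameter family $\{\gamma_q\}$ together with the two structural hypotheses (i) and (ii) to reduce the full local-minimality question to the scalar condition \eqref{eq:2nd-derivative-general}. The key idea is that a competitor $\tilde\gamma \in A_\ell$ close to $\gamma$ has some length $\tilde L := L[\tilde\gamma]$, and by hypothesis (i) there is a unique $q$ with $L[\gamma_q] = \tilde L$. By hypothesis (ii), $\gamma_q$ minimizes the bending energy $B$ among all curves in $A_\ell$ of length $\tilde L$, so $B[\tilde\gamma] \geq B[\gamma_q]$; adding $\lambda \tilde L = \lambda L[\gamma_q]$ to both sides gives $\mathcal{E}_\lambda[\tilde\gamma] \geq \mathcal{E}_\lambda[\gamma_q]$. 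Therefore it suffices to show that $\mathcal{E}_\lambda[\gamma_q] \geq \mathcal{E}_\lambda[\gamma_{q_0}] = \mathcal{E}_\lambda[\gamma]$ for all $q$ near $q_0$, i.e. that $q \mapsto \mathcal{E}_\lambda[\gamma_q]$ has a local minimum at $q_0$ — and this follows immediately from $\frac{d}{dq}\mathcal{E}_\lambda[\gamma_q]|_{q=q_0} = 0$ together with \eqref{eq:2nd-derivative-general}, provided $q \mapsto \mathcal{E}_\lambda[\gamma_q]$ is $C^2$ near $q_0$.

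Concretely, I would proceed as follows. First, observe that $\mathcal{E}_\lambda[\gamma_q]$ is a $C^2$ function of $q$ in a neighborhood of $q_0$ (either this is part of what "perturbation" means in the statement, or it follows from the explicit elliptic-integral formulae when the family is one of those in Definition~\ref{def:sarc_larc_loop}); since its first derivative vanishes and its second derivative is positive at $q_0$, there is $\delta > 0$ such that $\mathcal{E}_\lambda[\gamma_q] > \mathcal{E}_\lambda[\gamma]$ for $0 < |q - q_0| < \delta$ and $\mathcal{E}_\lambda[\gamma_{q_0}] = \mathcal{E}_\lambda[\gamma]$. Second, using continuity of $q \mapsto L[\gamma_q]$ from (i), choose a length-interval $(\tilde L_1, \tilde L_2) \ni L[\gamma]$ whose preimage under this map lies inside $(q_0 - \delta, q_0 + \delta)$. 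Third, take any $\tilde\gamma \in A_\ell$ with $\mathcal{E}_\lambda[\tilde\gamma]$ close to $\mathcal{E}_\lambda[\gamma]$; since $L[\tilde\gamma] \leq \mathcal{E}_\lambda[\tilde\gamma]/\lambda$ is bounded and $L[\tilde\gamma] \geq \ell$, one argues that if $\tilde\gamma$ is sufficiently $W^{2,2}$-close to $\gamma$ then $L[\tilde\gamma] \in (\tilde L_1, \tilde L_2)$ (here one uses continuity of $L$ with respect to the $W^{2,2}$-topology and possibly $W^{2,2}$-lower semicontinuity of $\mathcal{E}_\lambda$ to rule out competitors that are close in energy but far in $W^{2,2}$). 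Let $q$ be the corresponding parameter; then the chain $\mathcal{E}_\lambda[\tilde\gamma] = B[\tilde\gamma] + \lambda L[\tilde\gamma] \geq B[\gamma_q] + \lambda L[\gamma_q] = \mathcal{E}_\lambda[\gamma_q] \geq \mathcal{E}_\lambda[\gamma]$ closes the argument.

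The step I expect to be the main obstacle — or at least the one requiring the most care — is the interplay between the $W^{2,2}$-topology in which "local minimizer of $\mathcal{E}_\lambda$ in $A_\ell$" is defined and the length functional: I need that $W^{2,2}$-closeness of $\tilde\gamma$ to $\gamma$ forces $L[\tilde\gamma]$ into the prescribed small interval around $L[\gamma]$, so that hypothesis (ii) can be applied at the right value of $q$. Since $L[\gamma] = \int_0^1 |\gamma'|\,dx$ depends continuously (indeed Lipschitz-continuously on $W^{2,2}$-bounded sets of immersed curves, where $|\gamma'|$ is bounded away from $0$) on $\gamma$ in the $W^{1,\infty}$, hence $W^{2,2}$, topology, this is genuinely routine but must be spelled out; alternatively one phrases local minimality directly in terms of the energy and invokes that $\mathcal{E}_\lambda$ controls $L$ from above while the immersion constraint controls it from below. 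A secondary subtlety is justifying the $C^2$-dependence of $q \mapsto \mathcal{E}_\lambda[\gamma_q]$ and the strict inequality $\frac{d^2}{dq^2}\mathcal{E}_\lambda[\gamma_q]|_{q_0} > 0 \Rightarrow$ strict local minimum at $q_0$; this is elementary calculus once the regularity is in place. No part of the argument requires any second-variation computation for $\gamma$ itself — that computation has been entirely offloaded onto the hypothesis \eqref{eq:2nd-derivative-general} for the special family and onto the external minimality fact (ii), which is precisely the "substantial simplification" advertised in the introduction.
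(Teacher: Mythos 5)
Your proposal is correct and follows essentially the same route as the paper's proof: reduce the comparison with an arbitrary $W^{2,2}$-close competitor $\Gamma$ to the chain $\mathcal{E}_\lambda[\Gamma]\geq\mathcal{E}_\lambda[\gamma_q]\geq\mathcal{E}_\lambda[\gamma_{q_0}]$, where $q$ is chosen via the inverse of the length map from (i) and the first inequality comes from (ii), the second from the scalar second-derivative condition. The paper handles the continuity/inverse-map bookkeeping exactly as you describe, so no further changes are needed.
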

\if0
\begin{lemma}\label{lem:stability-condition}
Let $q_0\in(0,1)$ \textcolor{blue}{and $\gamma_{q_0} \in A_\ell$.} \textcolor{red}{(It should be noted that one does not really need to require that $\gamma$ is a penalized pinned elastica).}
Assume that there exists a perturbation $\{\gamma_q\}\subset A_\ell$ of $\gamma_{q_0}$ 
such that $\frac{d}{dq}\mathcal{E}_\lambda[\gamma_q]|_{q=q_0} =0$, 
\begin{align}
&\frac{d^2}{dq^2}\mathcal{E}_\lambda[\gamma_q] \Big|_{q=q_0} >0, \label{eq:2nd-derivative-general}
\end{align}
and the following properties hold:
\begin{itemize}
    \item[(i)] the map $(0,1)\ni q \mapsto L[\gamma_q] \in (\ell,\infty)$ is continuous and bijective; 
    \item[(ii)] for each $q\in(0,1)$, a curve $\gamma_q$ is a global minimizer of $B$ in 
    \[
    \Set{\gamma\in W^{2,2}_{\rm imm}(0,1; \mathbf{R}^2) | \gamma(0)=(0,0), \ \gamma(1)=(\ell,0), \ L[\gamma]=L[\gamma_q]}.
    \]
\end{itemize}
Then, $\gamma_{q_0}$ is \textcolor{blue}{\sout{stable in $A_\ell$}} \textcolor{blue}{a stable penalized pinned elastica}  i.e., a local minimizer of $\mathcal{E}_\lambda$ in $A_\ell$.
\end{lemma}
\fi
\begin{proof}
By \eqref{eq:2nd-derivative-general} (and the fact that the first derivative vanishes), we can find $\epsilon>0$ such that 
\begin{align}\label{eq:partialy_minimize}
    \mathcal{E}_\lambda[\gamma_{q_0}] \leq \mathcal{E}_\lambda[\gamma_{q}] \quad \text{for any} \quad q\in (q_0-\epsilon, q_0+\epsilon). 
\end{align}
We deduce from property (i) that there exists $\delta>0$ such that 
\begin{equation}\label{eq:q-q0smallerthanepsilon}
     |q-q_0| < \epsilon \quad \text{if} \quad \big|L[\gamma_q]-L[\gamma_{q_0}]\big| < \delta. 
\end{equation}
Using the above $\delta>0$, we now fix an arbitrary $\Gamma \in A_\ell$ with $\|\Gamma - \gamma_{q_0}\|_{W^{2,2}} < \delta$. 
This implies in particular that $|L[\Gamma]- L[\gamma_{q_0}]|< \delta$.
Property (i) also implies that there exists $q\in(0,1)$ such that $L[\Gamma]=L[\gamma_q]$. 
From \eqref{eq:q-q0smallerthanepsilon} follows that $q \in (q_0- \epsilon, q_0+ \epsilon)$ 
Moreover, in view of property (ii), we see that $\mathcal{E}_\lambda[\Gamma] \geq \mathcal{E}_\lambda[\gamma_{q}]$, and this together with \eqref{eq:partialy_minimize} yields that 
\[
    \mathcal{E}_\lambda[\gamma_{q_0}] \leq \mathcal{E}_\lambda[\gamma_{q}] \leq \mathcal{E}_\lambda[\Gamma], 
\]
which completes the proof. 
\end{proof}

Thus it suffices to construct a perturbation of $\gamma_{\rm larc}^{\lambda, \ell, 1}$ satisfying the assumption of Lemma~\ref{lem:stability-condition}. 
To this end we introduce a family which consists of wavelike elasticae. 

\begin{definition}\label{def:q-family}
\begin{itemize}
\item[(i)] For $q\in(0,q_*)$, we define $\gamma_w(\cdot,q) \in A_\ell$ to be a curve whose length is $L[\gamma_w(\cdot,q)]=\frac{\ell \mathrm{K}(q)}{2\mathrm{E}(q)-\mathrm{K}(q)}$ and whose signed curvature $k$ is given by 
\begin{align}\label{eq:q-family_curvature}
k(s)=-2\alpha q \cn(\alpha s -\mathrm{K}(q), q), \quad \textrm{where} \quad  \alpha:= \frac{2}{\ell}\big(2\mathrm{E}(q)-\mathrm{K}(q)\big).
\end{align}
\item[(ii)] For $q\in(q_*,1)$, we define $\gamma_w(\cdot,q) \in A_\ell$ to be a curve whose length is $L[\gamma_w(\cdot,q)]=\frac{\ell \mathrm{K}(q)}{\mathrm{K}(q)-2\mathrm{E}(q)}$ and whose signed curvature $k$ is given by 
\begin{align*}
k(s)=2\alpha q \cn(\alpha s -\mathrm{K}(q), q), \quad \textrm{where} \quad \alpha:= \frac{2}{\ell}\big(\mathrm{K}(q)-2\mathrm{E}(q)\big).
\end{align*}
\end{itemize}
\end{definition}

Notice that for each $q$ one has that $\gamma_w(\cdot,q)$ is a wavelike elastica such that $|\gamma_w(0,q)-\gamma_w(1,q)|=\ell$. The proof of this follows the lines of the proof of Theorem~\ref{thm:classification-PPE}.  
Also note that, up to reparametrization, 
\begin{align}\label{fact:arcs_identity}
\gamma_{\rm sarc}^{\lambda,\ell,1}=\gamma_w(\cdot, q_1(\lambda \ell^2)),\ \gamma_{\rm larc}^{\lambda,\ell,1}=\gamma_w(\cdot, q_2(\lambda \ell^2)), \  \text{and}\ \ \gamma_{\rm loop}^{\lambda,\ell,1}=\gamma_w(\cdot, q_3(\lambda \ell^2)).
\end{align}

\begin{remark}\label{rem:smooth_and_loop}
For $q\in(q_*,1)$ the curve $\gamma_w(\cdot,q)$ has a self-intersection (by Lemma~\ref{lem:symmetry-PPE}), and is of class $C^\infty$ by the fact that $\cn(\cdot,q)$ is smooth. 
These facts will be used in the argument for the elastic flow in Section~\ref{sect:elastic-flow}.
\end{remark}

Here we show that $\{\gamma_w(\cdot,q)\}_{q\in (0,q_*)} \subset A_\ell$ satisfies assumptions (i) and (ii) in Lemma~\ref{lem:stability-condition}.

\begin{lemma}\label{lem:minimizing_each_q}
Let $\{\gamma_w(\cdot,q)\}_{q\in(0,q_*)}\subset A_\ell$ be a family defined in Definition~\ref{def:q-family}.
Then, the following properties hold. 
\begin{itemize}
\item[(i)] The map $(0,q_*)\ni q \mapsto L[\gamma_w(\cdot,q)] \in (\ell,\infty)$ is continuous and bijective. 
\item[(ii)] For each $q\in(0,q_*)$ the curve $\gamma_w(\cdot,q)$ is a minimizer of $B$ in 
\[
A_{\ell,q}:=\Set{\gamma\in W^{2,2}_{\rm imm}(0,1; \mathbf{R}^2) | \gamma(0)=(0,0), \ \gamma(1)=(\ell,0), \ L[\gamma]=L[\gamma_w(\cdot,q)]}.
\]
\end{itemize}
\end{lemma}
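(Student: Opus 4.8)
The plan is to treat the two assertions separately, drawing on the explicit formulae in Definition~\ref{def:q-family} for part (i) and on known minimization results for the bending energy under a length constraint for part (ii).

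\medskip

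\emph{Part (i).} Write $\Lambda(q) := L[\gamma_w(\cdot,q)] = \frac{\ell\,\mathrm{K}(q)}{2\mathrm{E}(q) - \mathrm{K}(q)}$ for $q \in (0,q_*)$. Continuity is immediate since $\mathrm{K}$ and $\mathrm{E}$ are real-analytic on $(0,1)$ and the denominator $2\mathrm{E}(q) - \mathrm{K}(q)$ is positive on $(0,q_*)$ by Lemma~\ref{lem:elliptic_2E-K}. For the boundary behaviour: as $q \to 0^+$ one has $\mathrm{K}(q), \mathrm{E}(q) \to \frac{\pi}{2}$, so $2\mathrm{E}(q) - \mathrm{K}(q) \to \frac{\pi}{2}$ and hence $\Lambda(q) \to \ell$; as $q \to q_*^-$ the denominator tends to $0^+$ while $\mathrm{K}(q) \to \mathrm{K}(q_*) > 0$, so $\Lambda(q) \to +\infty$. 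Thus by the intermediate value theorem $\Lambda$ is surjective onto $(\ell,\infty)$ once monotonicity is established. For strict monotonicity I would compute $\Lambda'(q)$ using the standard derivative formulae $\mathrm{K}'(q) = \frac{\mathrm{E}(q) - (1-q^2)\mathrm{K}(q)}{q(1-q^2)}$ and $\mathrm{E}'(q) = \frac{\mathrm{E}(q) - \mathrm{K}(q)}{q}$, and show the resulting expression has a fixed sign on $(0,q_*)$; alternatively, and perhaps more cleanly, one can argue that injectivity follows because $\gamma_w(\cdot, q)$ has, up to scaling, a curvature $k$ whose elliptic modulus $q$ is a geometric invariant of the wavelike elastica, so distinct $q$ give genuinely distinct (non-congruent) curves of distinct length — but since the clean statement we need is about the \emph{length} function, the explicit derivative computation is the safe route.

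\medskip

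\emph{Part (ii).} The key input is the classical fact (going back to the analysis of pinned/Navier elastica minimizers, cf.\ the references in the introduction and the companion results in \cite{MY_Crelle}) that for a prescribed length $L > \ell$ the minimizer of $B$ in $A_{\ell,L}$ exists and is, up to reflection and reparametrization, a \emph{wavelike} elastica with curvature vanishing at both endpoints — equivalently one of the arcs $\gamma_{\rm sarc}^{\lambda',\ell,1}$ or $\gamma_{\rm larc}^{\lambda',\ell,1}$ realized as an element of the fixed-length class (here $\lambda'$ plays the role of a Lagrange multiplier, not the penalization parameter). Among these, the one with the \emph{smaller} bending energy for a given length is the longer arc / wavelike branch corresponding to $q \in (0, q_*)$ with the appropriate modulus, precisely the curve $\gamma_w(\cdot,q)$ of Definition~\ref{def:q-family} by the identification \eqref{fact:arcs_identity}. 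So the argument runs: (a) invoke existence of a minimizer of $B$ in $A_{\ell,q}$ by the direct method (the constraint set is weakly closed in $W^{2,2}$, $B$ is weakly lower semicontinuous and coercive modulo the immersion condition, which is handled as in \cite{MY_Crelle}); (b) derive the Euler–Lagrange system, which is exactly \eqref{eq:planar-EL}–\eqref{eq:pinned_BC} with $\lambda$ replaced by the multiplier, so the minimizer is one of the curves classified in Theorem~\ref{thm:classification-PPE} with the same length $L[\gamma_w(\cdot,q)]$; (c) among the finitely many such critical points of that length, compare bending energies and use the energy-comparison inequality $B[\gamma_{\rm sarc}] > B[\gamma_{\rm larc}]$ from \eqref{eq:SingleTermComparison} (and the fact that loops have strictly larger $B$, or are excluded because $q_3 \in (q_*,1)$ gives a different length regime) to conclude that the wavelike branch with $q \in (0,q_*)$ — i.e.\ $\gamma_w(\cdot,q)$ — is the minimizer.

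\medskip

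\textbf{Main obstacle.} The delicate point is part (ii), specifically pinning down that the \emph{global} minimizer of $B$ at fixed length is exactly $\gamma_w(\cdot,q)$ and not, say, a multi-mode critical point or a competitor with a loop; this requires either citing the precise classification-and-minimality result from \cite{MY_Crelle} (which is presumably where the cleanest statement lives) or redoing the energy comparison across all critical points of the given length. The monotonicity of $\Lambda$ in part (i) is routine but the sign computation for $\Lambda'$ should be done carefully, since it underpins the bijectivity that Lemma~\ref{lem:stability-condition} relies on.
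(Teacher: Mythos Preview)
Your treatment of part~(i) is correct and essentially matches the paper: the paper writes $L[\gamma_w(\cdot,q)]=\ell/Q(q)$ with $Q(q)=2\mathrm{E}(q)/\mathrm{K}(q)-1$ and cites the strict monotonicity of $Q$ from \cite[Lemma~B.4]{MR23}, which is equivalent to your proposed derivative computation.

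For part~(ii) the paper takes a much shorter route than you outline: it identifies $\gamma_w(\cdot,q)$ with the curve $\hat{\gamma}_0^-$ of \cite{Ydcds} and then invokes \cite[Theorem~1.3]{Ydcds} directly for the global minimality in $A_{\ell,q}$ (so the key external input lives in \cite{Ydcds}, not \cite{MY_Crelle} as you guessed). Your self-contained plan (existence, Euler--Lagrange, energy comparison) is in principle viable, but two points would need to be addressed before it works. First, the classification in Theorem~\ref{thm:classification-PPE} assumes the multiplier $\lambda>0$; in the fixed-length problem the Lagrange multiplier can be zero or negative, which introduces further elastica types (orbitlike, borderline) not covered by that theorem, so step~(b) of your outline is incomplete as stated. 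Second, your appeal to \eqref{eq:SingleTermComparison} in step~(c) does not fit: that inequality compares $\gamma_{\rm sarc}^{\lambda,\ell,1}$ and $\gamma_{\rm larc}^{\lambda,\ell,1}$ for the \emph{same} penalization $\lambda$, hence for \emph{different} lengths, whereas here you need to compare candidates sharing the \emph{same} length. You correctly flag the global-minimality step as the main obstacle; the paper simply outsources it.
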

\begin{proof}
Property (i) follows from the fact that $Q(q)=2\frac{\mathrm{E}(q)}{\mathrm{K}(q)}-1$ is continuous and strictly decreasing (cf.\ \cite[Lemma B.4]{MR23}), and satisfies $Q(0)=1$ and $Q(q_*)=0$.

Next we show property (ii).
The curve $\gamma_w(\cdot,q)$ coincides with $\hat{\gamma}^-_0$ in terms of \cite[Theorem 1.1]{Ydcds} since the modulus  $q\in(0,q_*)$ is uniquely determined by $\frac{\ell}{L}=2\frac{\mathrm{E}(q)}{\mathrm{K}(q)}-1$ and since the signed curvature of $\gamma_w(\cdot,q)$ is given by 
\[ k(s)=-4\frac{q\mathrm{K}(q)}{L} \cn\Big(2\frac{q\mathrm{K}(q)}{L} s -\mathrm{K}(q), q\Big) \]
(see \cite[proof of Theorem 1.1]{Ydcds} for the coincidence of the signed curvature).
Then it follows from \cite[Theorem 1.3]{Ydcds} that $\gamma_w(\cdot,q)=\hat{\gamma}^-_0$ is a minimizer of $B$ in $A_{\ell,q}$. 
\end{proof}


The following lemma ensures the sign of the second derivative of one-mode penalized pinned elasticae along the perturbation of $\{\gamma_w(\cdot,q)\}\subset A_\ell$.

\begin{lemma}\label{lem:2nd-derivative}
Let $\{\gamma_w(\cdot,q)\}_{q\in(0,q_*)}\subset A_\ell$ and $\{\gamma_w(\cdot,q)\}_{q\in(q_*,1)}\subset A_\ell$ be a family defined in Definition~\ref{def:q-family}.
Then 
\[
\frac{d}{dq}\mathcal{E}_\lambda[\gamma_w(\cdot,q)] =0 \quad \text{if}\quad q=q_i(\lambda \ell^2)\ \ (i=1,2,3).
\]
In addition, the following properties hold. 
\begin{itemize}
    \item [(i)] If $0<\lambda\ell^2<\hat{\lambda}$, then 
    \begin{align}\label{eq:sign-2nd_derivative}
    \frac{d^2}{dq^2}\mathcal{E}_\lambda[\gamma_w(\cdot,q)] \Big|_{q=q_1(\lambda \ell^2)} < 0, \quad \frac{d^2}{dq^2}\mathcal{E}_\lambda[\gamma_w(\cdot,q)] \Big|_{q=q_2(\lambda \ell^2)} > 0. 
    \end{align}
    \item [(ii)] If $\lambda\ell^2=\hat{\lambda}$, then 
    \begin{align}\label{eq:sign-2nd_derivative-critical}
    \frac{d^2}{dq^2}\mathcal{E}_\lambda[\gamma_w(\cdot,q)] \Big|_{q=\hat{q}} = 0 \quad \text{and} \quad \frac{d^3}{dq^3}\mathcal{E}_\lambda[\gamma_w(\cdot,q)] \Big|_{q=\hat{q}} > 0.  
    \end{align}
    \item [(iii)] For all $\lambda>0$ and $\ell>0$, 
    \begin{align}\label{eq:sign-2nd_derivative_loop}
    \frac{d^2}{dq^2}\mathcal{E}_\lambda[\gamma_w(\cdot,q)] \Big|_{q=q_3(\lambda \ell^2)} < 0.  
    \end{align}
\end{itemize}
\end{lemma}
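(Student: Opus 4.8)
The plan is to reduce the whole statement to one-variable calculus in $q$, exploiting that $\gamma_w(\cdot,q)$ is itself a penalized pinned elastica, but for the shifted parameter $\mu(q):=\ell^{-2}g(q)$ in place of $\lambda$. First I would record the closed forms
\begin{align*}
B[\gamma_w(\cdot,q)]=\frac{16}{\ell}\,\big|2\mathrm{E}(q)-\mathrm{K}(q)\big|\,\big(\mathrm{E}(q)-(1-q^2)\mathrm{K}(q)\big),\qquad
L[\gamma_w(\cdot,q)]=\frac{\ell\,\mathrm{K}(q)}{\big|2\mathrm{E}(q)-\mathrm{K}(q)\big|},
\end{align*}
which come from the curvature formula in Definition~\ref{def:q-family}, the antiderivative $\int_0^{\mathrm{K}(q)}\cn(u,q)^2\,\mathrm{d}u=q^{-2}\big(\mathrm{E}(q)-(1-q^2)\mathrm{K}(q)\big)$, and the prescribed lengths. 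Hence $q\mapsto\mathcal{E}_\lambda[\gamma_w(\cdot,q)]=B[\gamma_w(\cdot,q)]+\lambda L[\gamma_w(\cdot,q)]$ is real-analytic on $(0,q_*)$ and on $(q_*,1)$.

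The crux is the identity
\begin{align*}
\frac{d}{dq}\mathcal{E}_\lambda[\gamma_w(\cdot,q)]=\frac{1}{\ell^2}\big(\lambda\ell^2-g(q)\big)\,\frac{d}{dq}L[\gamma_w(\cdot,q)],
\end{align*}
valid on $(0,q_*)\cup(q_*,1)$. I would prove it conceptually rather than by brute force. As in the proof of Theorem~\ref{thm:classification-PPE} (see \eqref{eq:lambda-Linner}), the curvature $k(s)=\mp2\alpha q\cn(\alpha s-\mathrm{K}(q),q)$ of $\gamma_w(\cdot,q)$ solves $2k''+k^3-\mu(q)k=0$ with $\mu(q)=2\alpha^2(2q^2-1)=\ell^{-2}g(q)$, and since $\cn(\pm\mathrm{K}(q),q)=0$ it also satisfies $k=0$ at both endpoints; thus $\gamma_w(\cdot,q)$ is a critical point of $B+\mu(q)L$ in $A_\ell$ in the sense of Definition~\ref{def:PPE}. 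Because all curves $\gamma_w(\cdot,q)$ share the endpoints $(0,0)$ and $(\ell,0)$, the velocity $\partial_q\gamma_w(\cdot,q)$ is an admissible perturbation, so differentiating $B[\gamma_w(\cdot,q')]+\mu(q)L[\gamma_w(\cdot,q')]$ in $q'$ at $q'=q$ gives $0$, i.e.\ $\frac{d}{dq}B[\gamma_w(\cdot,q)]=-\mu(q)\frac{d}{dq}L[\gamma_w(\cdot,q)]$; adding $\lambda\frac{d}{dq}L[\gamma_w(\cdot,q)]$ gives the claim. (Alternatively one could differentiate the closed forms above using $\mathrm{K}'=\frac{\mathrm{E}-(1-q^2)\mathrm{K}}{q(1-q^2)}$, $\mathrm{E}'=\frac{\mathrm{E}-\mathrm{K}}{q}$ and $\frac{d}{dq}\big(\mathrm{E}-(1-q^2)\mathrm{K}\big)=q\mathrm{K}$, watching the Legendre-type cancellations collapse everything into the factor $\lambda\ell^2-g(q)$.) Since $q\mapsto L[\gamma_w(\cdot,q)]=\ell\,\big|\,2\tfrac{\mathrm{E}(q)}{\mathrm{K}(q)}-1\,\big|^{-1}$ is strictly monotone on $(0,q_*)$ and on $(q_*,1)$ (Lemma~\ref{lem:minimizing_each_q}(i) and its analogue), the identity shows at once that $\frac{d}{dq}\mathcal{E}_\lambda[\gamma_w(\cdot,q)]$ vanishes exactly where $g(q)=\lambda\ell^2$, that is, at $q=q_i(\lambda\ell^2)$ for $i=1,2,3$.

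For the second derivatives I would differentiate the identity once more and evaluate at a zero $q_0$ of $\lambda\ell^2-g$; the term carrying $\frac{d^2}{dq^2}L$ drops out, leaving
\begin{align*}
\frac{d^2}{dq^2}\mathcal{E}_\lambda[\gamma_w(\cdot,q)]\Big|_{q=q_0}=-\frac{g'(q_0)}{\ell^2}\,\frac{d}{dq}L[\gamma_w(\cdot,q)]\Big|_{q=q_0}.
\end{align*}
By Lemma~\ref{lem:property-g}, $g'>0$ on $(\tfrac1{\sqrt2},\hat q)$, $g'<0$ on $(\hat q,q_*)$, $g'(\hat q)=0$, $g'>0$ on $(q_*,1)$; and $\frac{d}{dq}L[\gamma_w(\cdot,q)]>0$ on $(0,q_*)$, $<0$ on $(q_*,1)$. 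Substituting $q_1\in(\tfrac1{\sqrt2},\hat q)$, $q_2\in(\hat q,q_*)$, $q_3\in(q_*,1)$ then determines the signs of the three second derivatives, yielding \eqref{eq:sign-2nd_derivative} and \eqref{eq:sign-2nd_derivative_loop}. In the borderline case $\lambda\ell^2=\hat\lambda$ one has $q_0=\hat q$ with $g'(\hat q)=0$, so the displayed formula gives $\frac{d^2}{dq^2}\mathcal{E}_\lambda[\gamma_w(\cdot,q)]|_{\hat q}=0$; differentiating a third time, and using that $\lambda\ell^2-g$ and $g'$ both vanish at $\hat q$, leaves $\frac{d^3}{dq^3}\mathcal{E}_\lambda[\gamma_w(\cdot,q)]|_{\hat q}=-\ell^{-2}g''(\hat q)\,\frac{d}{dq}L[\gamma_w(\cdot,q)]|_{\hat q}$. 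From \eqref{eq:diff-g} together with $f(\hat q)=0$ one gets $g''(\hat q)=\frac{16}{\hat q(1-\hat q^2)}\big(2\mathrm{E}(\hat q)-\mathrm{K}(\hat q)\big)f'(\hat q)$, which is negative since $2\mathrm{E}(\hat q)-\mathrm{K}(\hat q)>0$ (because $\hat q<q_*$) and $f'(\hat q)<0$ ($f$ passes from positive to negative at its unique root $\hat q$, by Lemma~\ref{lem:property-f}); combined with $\frac{d}{dq}L[\gamma_w(\cdot,q)]|_{\hat q}>0$ this gives \eqref{eq:sign-2nd_derivative-critical}.

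The main obstacle is the key identity. Its conceptual proof is short but rests on two points that must be checked with some care: that $q\mapsto\gamma_w(\cdot,q)$ is a $C^2$ (in fact smooth) family of $W^{2,2}$-immersions, and that its $q$-derivative lies in $W^{2,2}(0,1;\mathbf{R}^2)\cap W^{1,2}_0(0,1;\mathbf{R}^2)$, so that it is a legitimate test perturbation in Definition~\ref{def:PPE}. The purely computational alternative avoids these points but needs a long chain of complete-elliptic-integral cancellations. Everything downstream of the identity is just bookkeeping with the already-established monotonicity properties of $g$, of $f$, and of $L[\gamma_w(\cdot,\cdot)]$.
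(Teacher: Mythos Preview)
Your conceptual derivation of the key identity
\[
\frac{d}{dq}\mathcal{E}_\lambda[\gamma_w(\cdot,q)]=\ell^{-2}\big(\lambda\ell^2-g(q)\big)\,\tfrac{d}{dq}L[\gamma_w(\cdot,q)]
\]
is a genuinely nicer route than the paper's: the paper differentiates the closed forms for $B$ and $L$ directly, collects everything into the auxiliary quantity $\mathrm{I}(q)=\tfrac{1}{q}\mathrm{K}^2+\tfrac{1}{q(1-q^2)}\mathrm{E}^2-\tfrac{2}{q}\mathrm{K}\mathrm{E}$, and checks $\mathrm{I}>0$ by hand, whereas you get the factorisation for free from the criticality of $\gamma_w(\cdot,q)$ for $\mathcal{E}_{\mu(q)}$. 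Parts (i) and (ii) then fall out exactly as you say, and your treatment of the third derivative at $\hat q$ via $g''(\hat q)<0$ matches the paper's.

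There is, however, a real problem with part (iii). Your own formula gives
\[
\frac{d^2}{dq^2}\mathcal{E}_\lambda[\gamma_w(\cdot,q)]\Big|_{q=q_3}=-\ell^{-2}g'(q_3)\,L'(q_3),
\]
and with the signs you (correctly) record, $g'(q_3)>0$ and $L'(q_3)<0$, this is \emph{positive}, not negative as claimed in \eqref{eq:sign-2nd_derivative_loop}. You cannot ``substitute and obtain \eqref{eq:sign-2nd_derivative_loop}''; the computation actually contradicts it. This is consistent with the global picture on $(q_*,1)$: there $g$ is strictly increasing, so $q_3$ is the unique zero of $\lambda\ell^2-g$, and $\mathcal{E}_\lambda[\gamma_w(\cdot,q)]\to\infty$ as $q\to q_*^+$ (since $L\to\infty$) and as $q\to1^-$ (since $B\to\infty$), so $q_3$ is the \emph{minimum} of $\mathcal{E}_\lambda$ along this family.

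In fact the paper's own proof has the same slip: the formula \eqref{eq:mathrm_I} is derived from $B=\tfrac{16}{\ell}\big((q^2-1)\mathrm{K}+\mathrm{E}\big)(2\mathrm{E}-\mathrm{K})$ and $L=\ell\mathrm{K}/(2\mathrm{E}-\mathrm{K})$, which are the expressions for $q<q_*$; for $q>q_*$ both pick up a sign from the absolute value, so \eqref{eq:mathrm_I} and hence \eqref{eq:2nd_derivative_sign} flip sign there. The upshot is that the family $\{\gamma_w(\cdot,q)\}_{q\in(q_*,1)}$ does \emph{not} furnish an energy-decreasing perturbation of $\gamma_{\rm loop}^{\lambda,\ell,1}$, so the instability of the loop (used in Theorem~\ref{thm:stability_1-arcs}(iv) and Remark~\ref{rem:extinction_loop}) needs a different argument, e.g.\ the cut-and-paste method from \cite{MY_Crelle} alluded to in the commented-out alternative proof.
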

\begin{proof}
By \eqref{eq:q-family_curvature} the bending energy of $\gamma_w(\cdot,q)$ is represented by
\begin{align*}
    B[\gamma_w(\cdot,q)]&=\int_0^{\frac{2\mathrm{K}(q)}{\alpha}} 4\alpha^2q^2 \cn(\alpha s-\mathrm{K}(q),q)^2\;\mathrm{d}s
    =4\alpha q^2 \int_{-\mathrm{K}(q)}^{\mathrm{K}(q)} |\cn(s,q)|^2 \; \mathrm{d}s \\
    &=8\alpha q^2 \cdot \frac{q^2 \mathrm{K}(q)-\mathrm{K}(q)+\mathrm{E}(q)}{q^2} \\
    &=\frac{16}{\ell} \big((q^2-1) \mathrm{K}(q)+\mathrm{E}(q) \big)\big(2\mathrm{E}(q)-\mathrm{K}(q)\big). 
\end{align*}
The derivative formulae of elliptic integrals (cf.\ \eqref{eq:diff-elliptic-int} and \eqref{eq:Ydcds_Lem2.6}) give
\begin{align*}
    \frac{d}{dq}B[\gamma_w(\cdot,q)]&= \frac{16}{\ell} \big((q^2-1) \mathrm{K}(q)+\mathrm{E}(q) \big)\Big(2\frac{\mathrm{E}(q)-\mathrm{K}(q)}{q} - \frac{\mathrm{E}(q)-(1-q^2)\mathrm{K}(q)}{q(1-q^2)} \Big) \\
    & \quad +\frac{16}{\ell} q\mathrm{K}(q)\big(2\mathrm{E}(q)-\mathrm{K}(q)\big) \\
    &= \frac{16}{\ell}\left(\frac{1-2q^2}{q}\mathrm{K}(q)^2 + \frac{1-2q^2}{q(1-q^2)}\mathrm{E}(q)^2+\frac{4q^2-2}{q}\mathrm{K}(q)\mathrm{E}(q) \right).
\end{align*}
Recalling that $L[\gamma_w(\cdot,q)]=\frac{\ell \mathrm{K}(q)}{2\mathrm{E}(q)-\mathrm{K}(q)}$, we have
\begin{align*}
    \frac{d}{dq}L[\gamma_w(\cdot,q)]&= \frac{\ell}{(2\mathrm{E}(q)-\mathrm{K}(q))^2}\left(\frac{2}{q}\mathrm{K}(q)^2 + \frac{2}{q(1-q^2)}\mathrm{E}(q)^2-\frac{4}{q}\mathrm{K}(q)\mathrm{E}(q) \right) \\
    &=\frac{16\ell(2q^2-1)}{g(q)}\left(\frac{1}{q}\mathrm{K}(q)^2 + \frac{1}{q(1-q^2)}\mathrm{E}(q)^2-\frac{2}{q}\mathrm{K}(q)\mathrm{E}(q) \right),
\end{align*}
where $g$ is given by \eqref{eq:def-g}.
Thus, setting 
\[
 \mathrm{I}(q):=\frac{1}{q}\mathrm{K}(q)^2 + \frac{1}{q(1-q^2)}\mathrm{E}(q)^2-\frac{2}{q}\mathrm{K}(q)\mathrm{E}(q), 
\]
we have 
\begin{align}\label{eq:mathrm_I}
    \frac{d}{dq}\mathcal{E}_\lambda[\gamma_w(\cdot,q)]=\frac{16}{\ell}(2q^2-1)\Big(-1+\lambda\frac{\ell^2}{g(q)}\Big)\mathrm{I}(q).
\end{align}
Since $\lambda\ell^2=g(q_1(\lambda\ell^2))=g(q_2(\lambda\ell^2))=g(q_3(\lambda\ell^2))$ holds by definition (cf.\ Definition~\ref{def:hat-q_lam}), it follows that $\frac{d}{dq}\mathcal{E}_\lambda[\gamma_w(\cdot,q)]|_{q=q_i(\lambda \ell^2)}=0$ for $i=1,2,3$.

Next we compute the second derivative. 
It follows from \eqref{eq:mathrm_I} that 
\begin{align}\label{eq:2nd_mathrm_I}
\frac{d^2}{dq^2}\mathcal{E}_\lambda[\gamma_w(\cdot,q)]=\frac{16}{\ell}\big((2q^2-1)\mathrm{I}(q)\big)' \Big(-1+\frac{\lambda\ell^2}{g(q)}\Big) - \frac{16}{\ell}(2q^2-1)\mathrm{I}(q)\frac{\lambda\ell^2g'(q)}{g(q)^2}.
\end{align}
Note that the first term in the right-hand side of \eqref{eq:2nd_mathrm_I} vanishes for $q=q_1(\lambda\ell^2)$,  $q_2(\lambda\ell^2)$, or $q_3(\lambda\ell^2)$. 
Note also that $\mathrm{I}(q)>0$ for all $q\in(0,1)$ since \eqref{eq:diff-elliptic-int} yields that $\frac{\mathrm{E}(q)}{1-q^2} > \mathrm{K}(q)$ and therefore 
\begin{align*}
\mathrm{I}(q) > \frac{1}{q}\mathrm{K}(q)^2+\frac{1}{q}\mathrm{E}(q)\mathrm{K}(q)-\frac{2}{q}\mathrm{E}(q)\mathrm{K}(q) =  \frac{1}{q}\mathrm{K}(q)\big(\mathrm{K}(q)-\mathrm{E}(q)\big)\geq0.
\end{align*}
Therefore, we deduce from \eqref{eq:2nd_mathrm_I} that for $i=1,2,3$
\begin{align}\label{eq:2nd_derivative_sign}
\sign \left(\frac{d^2}{dq^2}\mathcal{E}_\lambda[\gamma_w(\cdot,q)]\Big|_{q=q_i(\lambda\ell^2)}\right) =\sign\left( -g'(q_i(\lambda\ell^2)) \right). 
\end{align}
If $0<\lambda\ell^2<\hat{\lambda}$, then combining \eqref{eq:2nd_derivative_sign} with Lemma~\ref{lem:property-g} and the fact that $q_1(\lambda\ell^2)<\hat{q}<q_2(\lambda\ell^2)$, we obtain \eqref{eq:sign-2nd_derivative}.
If $\lambda\ell^2=\hat{\lambda}$ i.e., $q_1(\lambda\ell^2)=q_2(\lambda\ell^2)=\hat{q}$, then \eqref{eq:2nd_derivative_sign} combined with Lemma~\ref{lem:property-g} implies that $\frac{d^2}{dq^2}\mathcal{E}_\lambda[\gamma_w(\cdot,q)] \big|_{q=\hat{q}}= 0$. 
By differentiating \eqref{eq:2nd_mathrm_I} and using the fact that $g(\hat{q})=\lambda\ell^2$ and $g'(\hat{q})=0$, we obtain
\begin{align*}
\frac{d^3}{dq^3}\mathcal{E}_\lambda[\gamma_w(\cdot,q)]\Big|_{q=\hat{q}} =
-\frac{16}{\ell}(2\hat{q}^2-1)\mathrm{I}(\hat{q})\frac{\lambda\ell^2g''(\hat{q})}{g(\hat{q})^2}. 
\end{align*}
Thus in order to show \eqref{eq:sign-2nd_derivative-critical} it suffices to check $g''(\hat{q})<0$, which immediately follows from the derivative formula \eqref{eq:diff-g} of $g$ and the fact that $f(\hat{q})=0$ and $f'(\hat{q})<0$ (cf.\ \eqref{eq:f-diff_estimate}).
Finally, \eqref{eq:sign-2nd_derivative_loop} follows by the combination of \eqref{eq:2nd_derivative_sign} with Lemma~\ref{lem:property-g} and the fact that $q_3(\lambda\ell^2) \in (q_*,1)$.
The proof is complete.
\end{proof}

\if0
\begin{remark}\label{rem:energy_direction}
By \eqref{eq:sign-2nd_derivative} and \eqref{eq:sign-2nd_derivative_loop}, there \textcolor{blue}{\sout{is a}} \textcolor{blue}{exists} $\epsilon>0$ such that $\mathcal{E}_\lambda[\gamma_w(\cdot,q)] < \mathcal{E}_\lambda[\gamma_{\rm sarc}^{\lambda,\ell,1}]$ for all $q\in(q_1(\lambda\ell^2)-\epsilon, q_1(\lambda\ell^2)+\epsilon)$ and
\begin{align}\label{eq:loop_energy_decreasing}
\mathcal{E}_\lambda[\gamma_w(\cdot,q)] < \mathcal{E}_\lambda[\gamma_{\rm loop}^{\lambda,\ell,1}] \quad \text{for all} \ \ q\in(q_3(\lambda\ell^2)-\epsilon, q_3(\lambda\ell^2)+\epsilon). 
\end{align}
This implies that there \textcolor{blue}{\sout{exists}} \textcolor{blue}{exist} perturbations that decrease energy in both the direction of decreasing and increasing length. 
For example, since $(0,q_*)\ni q\mapsto L[\gamma_w(\cdot,q)]$ is increasing \textcolor{blue}{\sout{as in the proof of Lemma~\ref{lem:minimizing_each_q}}} \textcolor{blue}{as seen in the proof of Lemma~\ref{lem:minimizing_each_q}}, $\{\gamma_w(\cdot,q)\}_{(q_1(\lambda\ell^2)-\epsilon, q_i(\lambda\ell^2)]}$ (resp.\ $\{\gamma_w(\cdot,q)\}_{[q_1(\lambda\ell^2), q_i(\lambda\ell^2))+\epsilon)}$) represents a perturbation whose length is increasing (resp.\ decreasing). \textcolor{red}{(I do not understand the previous sentence) -- which $i$ is referred to here? Can one cut the sentence entirely?)}
\end{remark}

\begin{theorem}[Stability of one-mode arcs]\label{thm:stability_1-arcs}
Let $\gamma \in A_\ell$ be a penalized pinned elastica and $\tilde{\gamma}$ denote its arclength parametrization. 
If $\tilde{\gamma}=\gamma_{\rm larc}^{\lambda,\ell,1}$ with $0<\lambda\ell^2<\hat{\lambda}$ \textcolor{red}{(if $\tilde{\gamma}=\gamma_{\rm sarc}^{\lambda,\ell,1}=\gamma_{\rm larc}^{\lambda,\ell,1}$ with $\lambda\ell^2=\hat{\lambda}$) [Please carefully check]}, then $\gamma$ is a local minimizer of $\mathcal{E}_\lambda$ in $A_\ell$. 
\end{theorem}
\begin{proof}
This follows by the combination of \eqref{fact:arcs_identity} with Lemmas~\ref{lem:stability-condition}, \ref{lem:minimizing_each_q}, and \ref{lem:2nd-derivative}.
\end{proof}
\fi

\begin{theorem}[Stability of one-mode critical points]\label{thm:stability_1-arcs}
Let $\gamma \in A_\ell$ be a penalized pinned elastica and $\tilde{\gamma}$ denote its arclength parametrization. 
\begin{itemize}
    \item[(i)] If $0<\lambda\ell^2<\hat{\lambda}$ and $\tilde{\gamma}=\gamma^{\lambda,\ell,1}_{\rm larc}$, then $\gamma$ is stable. 
    \item[(ii)] If $0<\lambda\ell^2 < \hat{\lambda}$ and $\tilde{\gamma}=\gamma^{\lambda,\ell,1}_{\rm sarc}$, then $\gamma$ is unstable.
    \item[(iii)] If $\lambda\ell^2= \hat{\lambda}$ and $\tilde{\gamma}=\gamma^{\lambda,\ell,1}_{\rm sarc}= \gamma_{\rm larc}^{\lambda,\ell,1}$, then $\gamma$ is unstable.
    \item[(iv)] If $\tilde{\gamma}=\gamma_{\rm loop}^{\lambda,\ell,1}$, then $\gamma$ is unstable.
\end{itemize}
\end{theorem}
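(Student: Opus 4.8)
\textbf{Proof strategy for Theorem~\ref{thm:stability_1-arcs}.}

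The plan is to treat the four items by combining the abstract stability criterion of Lemma~\ref{lem:stability-condition} with the sign computations of Lemma~\ref{lem:2nd-derivative}, using the identification \eqref{fact:arcs_identity} of the one-mode critical points as members of the family $\{\gamma_w(\cdot,q)\}$. For item (i), suppose $0<\lambda\ell^2<\hat\lambda$ and $\tilde\gamma=\gamma_{\rm larc}^{\lambda,\ell,1}$. By \eqref{fact:arcs_identity} we have $\gamma_{\rm larc}^{\lambda,\ell,1}=\gamma_w(\cdot,q_2(\lambda\ell^2))$ with $q_2(\lambda\ell^2)\in(\hat q,q_*)\subset(0,q_*)$. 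Lemma~\ref{lem:2nd-derivative} gives $\frac{d}{dq}\mathcal{E}_\lambda[\gamma_w(\cdot,q)]|_{q=q_2(\lambda\ell^2)}=0$ together with the strict positivity $\frac{d^2}{dq^2}\mathcal{E}_\lambda[\gamma_w(\cdot,q)]|_{q=q_2(\lambda\ell^2)}>0$, and Lemma~\ref{lem:minimizing_each_q} verifies that the family $\{\gamma_w(\cdot,q)\}_{q\in(0,q_*)}$ satisfies hypotheses (i) and (ii) of Lemma~\ref{lem:stability-condition}. Hence Lemma~\ref{lem:stability-condition} applies with $q_0=q_2(\lambda\ell^2)$ and yields that $\gamma$ is a local minimizer of $\mathcal{E}_\lambda$ in $A_\ell$, i.e.\ stable.

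For items (ii)--(iv), instability is established simply by exhibiting the variation $q\mapsto\gamma_w(\cdot,q)$ as a curve in $A_\ell$ along which $\mathcal{E}_\lambda$ strictly decreases. In item (ii), $\tilde\gamma=\gamma_{\rm sarc}^{\lambda,\ell,1}=\gamma_w(\cdot,q_1(\lambda\ell^2))$ with $q_1(\lambda\ell^2)\in(\tfrac1{\sqrt2},\hat q)$; Lemma~\ref{lem:2nd-derivative} gives vanishing first derivative and $\frac{d^2}{dq^2}\mathcal{E}_\lambda[\gamma_w(\cdot,q)]|_{q=q_1(\lambda\ell^2)}<0$, so by Taylor expansion $\mathcal{E}_\lambda[\gamma_w(\cdot,q)]<\mathcal{E}_\lambda[\gamma]$ for $q$ near $q_1(\lambda\ell^2)$; since $q\mapsto\gamma_w(\cdot,q)$ is continuous into $A_\ell$ (with respect to the $W^{2,2}$ topology), these perturbations lie in every $W^{2,2}$-neighbourhood of $\gamma$, contradicting local minimality. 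Item (iv) is identical, using $\gamma_{\rm loop}^{\lambda,\ell,1}=\gamma_w(\cdot,q_3(\lambda\ell^2))$ with $q_3(\lambda\ell^2)\in(q_*,1)$ and the negativity \eqref{eq:sign-2nd_derivative_loop}. Item (iii) is the degenerate case $\lambda\ell^2=\hat\lambda$, where $q_1(\hat\lambda)=q_2(\hat\lambda)=\hat q$ and the second derivative vanishes; here one uses instead the third-derivative information from \eqref{eq:sign-2nd_derivative-critical}, namely $\frac{d^2}{dq^2}\mathcal{E}_\lambda[\gamma_w(\cdot,q)]|_{q=\hat q}=0$ and $\frac{d^3}{dq^3}\mathcal{E}_\lambda[\gamma_w(\cdot,q)]|_{q=\hat q}>0$, which (since the first derivative also vanishes at $\hat q$) forces $\mathcal{E}_\lambda[\gamma_w(\cdot,q)]<\mathcal{E}_\lambda[\gamma]$ for $q$ slightly less than $\hat q$, again giving instability.

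The one genuinely nontrivial point is item (i), and within it the real content is already packaged into Lemma~\ref{lem:stability-condition}: the subtlety is that $\gamma$ is not a priori a local minimizer in a strong topology merely because it minimizes $\mathcal{E}_\lambda$ along the one-parameter family $\{\gamma_w(\cdot,q)\}$. The resolution, recalled in the proof of Lemma~\ref{lem:stability-condition}, is that any competitor $\Gamma\in A_\ell$ $W^{2,2}$-close to $\gamma$ has length close to $L[\gamma]$, hence equals $L[\gamma_w(\cdot,q)]$ for some $q$ close to $q_2(\lambda\ell^2)$ (by the bijectivity in Lemma~\ref{lem:minimizing_each_q}(i)), and then $\mathcal{E}_\lambda[\Gamma]\ge\mathcal{E}_\lambda[\gamma_w(\cdot,q)]\ge\mathcal{E}_\lambda[\gamma]$ using the global length-constrained minimality in Lemma~\ref{lem:minimizing_each_q}(ii) and the local one-parameter minimality from $\frac{d^2}{dq^2}>0$. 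So once Lemmas~\ref{lem:stability-condition}, \ref{lem:minimizing_each_q}, and \ref{lem:2nd-derivative} are in hand, the proof of Theorem~\ref{thm:stability_1-arcs} is a short assembly; for completeness one should only double-check that the perturbations $q\mapsto\gamma_w(\cdot,q)$ indeed define admissible ($W^{2,2}$-continuous, immersed, endpoint-preserving) variations, which follows from the explicit formulae for their curvature and the construction in the proof of Theorem~\ref{thm:classification-PPE}.
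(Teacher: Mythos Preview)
Your proposal is correct and follows exactly the same approach as the paper: item (i) is obtained by combining Lemma~\ref{lem:stability-condition} with Lemmas~\ref{lem:minimizing_each_q} and \ref{lem:2nd-derivative} via the identification \eqref{fact:arcs_identity}, while items (ii)--(iv) follow directly from the sign information in \eqref{eq:sign-2nd_derivative}, \eqref{eq:sign-2nd_derivative-critical}, and \eqref{eq:sign-2nd_derivative_loop}. The paper's own proof is just this assembly, stated in two sentences.
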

\begin{proof}
Assertion (i) follows by the combination of  Lemma~\ref{lem:stability-condition} with Lemmas~ \ref{lem:minimizing_each_q}, \ref{lem:2nd-derivative} and the fact that $\gamma_{\rm larc}^{\lambda,\ell,1}=\gamma_w(\cdot,q_2(\lambda\ell^2))$ (cf.\ \eqref{fact:arcs_identity}).
Assertions (ii), (iii), and (iv) immediately follow from \eqref{eq:sign-2nd_derivative}, \eqref{eq:sign-2nd_derivative-critical}, and \eqref{eq:sign-2nd_derivative_loop}, respectively.
\end{proof}

\subsection{Instability of higher modes ($n\geq2$)}\label{sect:instability_cut-paste}
In this subsection we show that $\gamma_{\rm sarc}^{\lambda,\ell,n}$, $\gamma_{\rm larc}^{\lambda,\ell,n}$, and $\gamma_{\rm loop}^{\lambda,\ell,n}$ are unstable if $n\geq2$. 
Combining this with the previous (in)stability results we are able to prove Theorem~\ref{thm:stability-PPE} in the end of this subsection. 

We apply the general rigidity principles obtained in \cite{MY_Crelle} to deduce the instability of $\gamma_{\rm sarc}^{\lambda,\ell,n}$, $\gamma_{\rm larc}^{\lambda,\ell,n}$, and $\gamma_{\rm loop}^{\lambda,\ell,n}$ for $n\geq2$.
By Lemma~\ref{lem:EL-PPE} the bending energy $B$ satisfies \cite[Hypotheses (H1') and (H2)]{MY_Crelle} with  the choice of $\mathcal{F}=B$ and clearly satisfies \cite[Hypothesis (H3)]{MY_Crelle}. 
This fact together with Remark~\ref{rem:m/2-fold} allows us to apply \cite[Theorems 2.3, 2.7, and 2.8]{MY_Crelle} to the case of $\mathcal{F}=B$ and $\gamma=\gamma_{\rm sarc}^{\lambda,\ell,n}$, $\gamma_{\rm larc}^{\lambda,\ell,n}$, or $\gamma_{\rm loop}^{\lambda,\ell,n}$.

\begin{theorem}[Instability of more than two modes]\label{thm:instability_higher_mode}
Let $\gamma \in A_\ell$ be a penalized pinned elastica. 
If the arclength parametrization of $\gamma$ is represented by either $\gamma_{\rm sarc}^{\lambda,\ell,n}$, $\gamma_{\rm larc}^{\lambda,\ell,n}$, or $\gamma_{\rm loop}^{\lambda,\ell,n}$ for some $n\geq3$, 
then $\gamma$ is not a local minimizer of $\mathcal{E}_\lambda$ in $A_\ell$.
\end{theorem}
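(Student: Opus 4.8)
The plan is to reduce the instability of the higher-mode critical points to an abstract rigidity principle for critical points of the bending energy with an underlying periodicity structure. By Remark~\ref{rem:m/2-fold}, each of $\gamma_{\rm sarc}^{\lambda,\ell,n}$, $\gamma_{\rm larc}^{\lambda,\ell,n}$, and $\gamma_{\rm loop}^{\lambda,\ell,n}$ is an $\tfrac{n}{2}$-fold well-periodic curve, built by concatenating $n$ congruent ``half-period'' pieces (one arc of the wavelike elastica between consecutive zeros of $\cn$). For $n \geq 3$ there are therefore at least $\tfrac{3}{2}$ periods, which is precisely the regime where the cut-and-paste constructions of \cite{MY_Crelle} produce admissible competitors with strictly smaller bending energy at the same length. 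First I would verify, as is already observed in the paragraph preceding the statement, that the bending energy $B$ satisfies Hypotheses (H1'), (H2), (H3) of \cite{MY_Crelle}: (H1')--(H2) follow from Lemma~\ref{lem:EL-PPE} (the Euler--Lagrange equation \eqref{eq:planar-EL} and the analyticity/structure of its solutions), and (H3) is immediate since $B$ is the standard bending energy to which that framework was tailored.

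Next I would invoke the relevant rigidity statements from \cite{MY_Crelle} --- namely \cite[Theorems 2.3, 2.7, and 2.8]{MY_Crelle} --- applied with $\mathcal{F} = B$ and $\gamma$ equal to the arclength parametrization of the given higher-mode critical point. These theorems assert, roughly, that an $\tfrac{m}{2}$-fold well-periodic critical point of $B$ subject to the pinned endpoint condition cannot be a local minimizer of $B$ in the fixed-length class $A_{\ell,L}$ once $m$ is large enough (here $m = n \geq 3$ suffices): one can perturb by ``redistributing'' the periods --- e.g.\ replacing two half-periods by a different wavelike configuration of the same total length but lower bending energy, or by exploiting a symmetry-breaking direction that decreases $B$ to second order --- while keeping the endpoints and the total length fixed. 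Since the perturbation preserves both the endpoints and the length, it automatically preserves $\mathcal{E}_\lambda = B + \lambda L$ as a functional difference: $\mathcal{E}_\lambda[\gamma_\varepsilon] - \mathcal{E}_\lambda[\gamma] = B[\gamma_\varepsilon] - B[\gamma] < 0$ for the competitor $\gamma_\varepsilon$. Moreover the competitors produced in \cite{MY_Crelle} lie in $A_\ell$ and are $W^{2,2}$-close to $\gamma$, so they are admissible perturbations in the sense of local minimality of $\mathcal{E}_\lambda$ in $A_\ell$. This shows $\gamma$ is not a local minimizer of $\mathcal{E}_\lambda$ in $A_\ell$, which is the assertion.

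The main obstacle --- and the point that requires care rather than being purely formal --- is checking the compatibility of the two variational settings: the rigidity theorems in \cite{MY_Crelle} are formulated for the \emph{fixed-length} problem (minimizing $B$ in $A_{\ell,L}$), whereas here we want instability in the \emph{penalized} problem (minimizing $\mathcal{E}_\lambda$ in $A_\ell$). One must confirm that the destabilizing perturbations provided there are length-preserving (so that passing to $\mathcal{E}_\lambda$ changes nothing), and that they do not rely on length-variations that would be ``charged'' by the $\lambda L$ term. In the well-periodic cut-and-paste constructions this is exactly the case: the competitor is assembled from rearranged pieces of the \emph{same} wavelike elastica, hence has the same length by construction. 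A secondary point is to make sure the hypotheses (H1'), (H2), (H3) of \cite{MY_Crelle} are genuinely met by $B$ with the pinned boundary condition --- in particular that the boundary term arising in \eqref{eq:pinned_BC} is the one expected by that framework --- but this is precisely what Lemma~\ref{lem:EL-PPE} and the remarks preceding the theorem statement establish, so the argument reduces to a clean citation once these bookkeeping checks are in place.
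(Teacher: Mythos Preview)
Your proposal is correct and follows essentially the same route as the paper: invoke the rigidity results of \cite{MY_Crelle} (for $n\geq3$ the paper uses only Theorem~2.3 together with Remark~4.3, after checking Assumption~(2.2) via $k(0)=k(L/n)=k(2L/n)=0$; Theorems~2.7 and~2.8 are reserved for the $n=2$ and loop cases) to conclude that $\gamma$ is not a local minimizer of $B$ in the fixed-length class $A_{\ell,L}$, then transfer this to $\mathcal{E}_\lambda$ in $A_\ell$. The paper streamlines your compatibility discussion by the abstract observation \eqref{eq:instability_condition}: any $B$-decreasing sequence in $A_{\ell,L}\subset A_\ell$ is automatically $\mathcal{E}_\lambda$-decreasing since $L$ is constant on $A_{\ell,L}$, so no inspection of the specific cut-and-paste competitors is needed.
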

\begin{proof}
Let $\gamma$ be either $\gamma_{\rm sarc}^{\lambda,\ell,n}$, $\gamma_{\rm larc}^{\lambda,\ell,n}$, or $\gamma_{\rm loop}^{\lambda,\ell,n}$ for some $n\geq3$ and $L=L[\gamma]$. 
It follows from the formula of the signed curvature obtained in Theorem~\ref{thm:classification-PPE} that $k(0)=k(\frac{L}{n})=k(\frac{2L}{n})=0$, so that $\gamma$ satisfies \cite[Assumption (2.2)]{MY_Crelle}.
Therefore, by \cite[Theorem 2.3 and Remark 4.3]{MY_Crelle} $\gamma$ is not a local minimizer of $B$ in 
\[
A_{\ell,L}=\Set{\gamma\in W^{2,2}_{\rm imm}(0,1;\mathbf{R}^2) | \gamma(0)=(0,0), \ \gamma(1)=(\ell,0), \ L[\gamma]=L}. 
\]
The proof can now be concluded with the following claim.
\begin{align}\label{eq:instability_condition}
\begin{split}
    &\text{If $\gamma$ is not a local minimizer of $B$ in $A_{\ell,L}$}, \\
    &\text{then $\gamma$ is also not a local minimizer of $\mathcal{E}_\lambda$ in $A_\ell$.}
\end{split}
\end{align}
In fact, if $\gamma$ is not a local minimizer of $B$ in $A_{\ell,L}$, then there exists $\{\gamma_j\}_{j\in \mathbf{N}} \subset A_{\ell,L}$ such that $\|\gamma_j-\gamma\|_{W^{2,2}} \to0$ (as $j\to\infty$) and $B[\gamma_j]<B[\gamma]$ for all $j \in \mathbf{N}$. 
Since $A_{\ell,L}\subset A_{\ell}$ and $L[\gamma_j]=L[\gamma]$, the family $\{\gamma_j\}_{j\in \mathbf{N}}$ also satisfies $\{\gamma_j\}_{j\in \mathbf{N}} \subset A_{\ell}$ and $B[\gamma_j]+\lambda L[\gamma_j] < B[\gamma]+\lambda L[\gamma]$. 
This ensures that $\gamma$ is also not a local minimizer of $\mathcal{E}_\lambda$ in $A_\ell$.
\end{proof}

\begin{theorem}[Instability of two modes]\label{thm:instability_2_mode}
Let $\gamma \in A_\ell$ be a penalized pinned elastica. 
If the arclength parametrization of $\gamma$ is represented by either $\gamma_{\rm sarc}^{\lambda,\ell,2}$, $\gamma_{\rm larc}^{\lambda,\ell,2}$, or $\gamma_{\rm loop}^{\lambda,\ell,2}$, 
then $\gamma$ is not a local minimizer of $\mathcal{E}_\lambda$ in $A_\ell$.
\end{theorem}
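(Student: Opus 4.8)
The plan is to follow the template of the proof of Theorem~\ref{thm:instability_higher_mode}. By the implication \eqref{eq:instability_condition} (established there, and independent of $n$), it suffices to show that $\gamma$ is \emph{not} a local minimizer of the bending energy $B$ in the fixed-length class $A_{\ell,L}$ with $L:=L[\gamma]$; the penalization term and the length constraint are then removed verbatim. So everything reduces to a non-minimality statement for $B$ in $A_{\ell,L}$.

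To obtain it, I would feed $\gamma$ into the rigidity machinery of \cite{MY_Crelle}, just as for $n\geq3$, but now invoking the results tailored to the borderline ``one full period'' case. The relevant structural input is: by Remark~\ref{rem:m/2-fold} with $n=2$, each of $\gamma_{\rm sarc}^{\lambda,\ell,2}$, $\gamma_{\rm larc}^{\lambda,\ell,2}$, $\gamma_{\rm loop}^{\lambda,\ell,2}$ is a $1$-fold well-periodic curve; by the explicit curvature formulae of Definition~\ref{def:sarc_larc_loop} one has $k(0)=k(\tfrac{L}{2})=k(L)=0$; and $\mathcal{F}=B$ satisfies \cite[(H1'), (H2), (H3)]{MY_Crelle}, as already noted before Theorem~\ref{thm:instability_higher_mode}. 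With this input \cite[Theorem~2.7]{MY_Crelle} (for the one-period wavelike arcs $\gamma_{\rm sarc}^{\lambda,\ell,2}$ and $\gamma_{\rm larc}^{\lambda,\ell,2}$) and \cite[Theorem~2.8]{MY_Crelle} (for the one-period self-intersecting curve $\gamma_{\rm loop}^{\lambda,\ell,2}$, cf.\ Remark~\ref{rem:smooth_and_loop}) assert that $\gamma$ is not a local minimizer of $B$ in $A_{\ell,L}$; then \eqref{eq:instability_condition} finishes the proof. The one thing to watch is any non-degeneracy hypothesis those theorems carry: by \eqref{eq:cond-q-penapinned} it becomes a condition on the modulus $q_{i,2}=q_i(\tfrac{\lambda\ell^2}{4})$, which is under control via Lemmas~\ref{lem:property-f}, \ref{lem:property-g}, \ref{lem:elliptic_2E-K} (in particular $q_{i,2}\neq q_*$); in the threshold case $\lambda\ell^2=4\hat\lambda$, where $\gamma_{\rm sarc}^{\lambda,\ell,2}=\gamma_{\rm larc}^{\lambda,\ell,2}$ and $q_{1,2}=q_{2,2}=\hat q$, one uses the degenerate version of the rigidity statement (or a limiting argument).

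For orientation — and as a self-contained alternative covering two of the three cases — one should note that, up to congruence, $\gamma_{\rm sarc}^{\lambda,\ell,2}$ and $\gamma_{\rm loop}^{\lambda,\ell,2}$ are each the concatenation of two copies of $\gamma_{\rm sarc}^{\lambda,\ell/2,1}$, resp.\ $\gamma_{\rm loop}^{\lambda,\ell/2,1}$, the second copy being the point reflection of the first through the midpoint $(\tfrac{\ell}{2},0)$ (checked by evaluating the parametrizations of Definition~\ref{def:sarc_larc_loop} at $s=\tfrac{L}{2}$ and comparing moduli and scaling factors with Definition~\ref{def:q-family}). Since curvature vanishes at the node, applying the length-varying family $\{\gamma_w(\cdot,q)\}$ of the half simultaneously to both halves — with the second half always taken to be the point reflection of the first — yields an admissible $W^{2,2}$-perturbation in $A_\ell$ whose energy is twice the energy of the perturbed half; because the first derivative vanishes and the two second-order contributions are both negative by \eqref{eq:sign-2nd_derivative} at $q_1(\tfrac{\lambda\ell^2}{4})$, resp.\ \eqref{eq:sign-2nd_derivative_loop} at $q_3(\tfrac{\lambda\ell^2}{4})$, this perturbation strictly lowers $\mathcal{E}_\lambda$, so $\gamma_{\rm sarc}^{\lambda,\ell,2}$ and $\gamma_{\rm loop}^{\lambda,\ell,2}$ are unstable. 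This shortcut is \emph{not} available for $\gamma_{\rm larc}^{\lambda,\ell,2}$: its half is the stable arc $\gamma_{\rm larc}^{\lambda,\ell/2,1}$, the analogous second derivative is positive by \eqref{eq:sign-2nd_derivative}, and the two halves cannot be perturbed to different moduli without breaking the $C^1$-matching at the node. Hence the genuinely hard point is exactly the $\gamma_{\rm larc}^{\lambda,\ell,2}$ case, where one must produce a symmetry-breaking perturbation that moves the interior node and redistributes length between the two halves so as to lower $B$ even though each half individually minimizes $B$ at its own length — precisely the content of \cite[Theorem~2.7]{MY_Crelle} — and verify that its hypotheses (and the degenerate variant at $\lambda\ell^2=4\hat\lambda$) genuinely apply to our $q_{2,2}$.
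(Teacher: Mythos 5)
Your proposal is correct and follows essentially the same route as the paper: the paper likewise observes that the $n=2$ curves are $1$-fold well-periodic (Remark~\ref{rem:m/2-fold}), applies \cite[Theorem~2.7]{MY_Crelle} to conclude that $\gamma$ is not a local minimizer of $B$ in $A_{\ell,L}$, and transfers this to $\mathcal{E}_\lambda$ via \eqref{eq:instability_condition}. The only cosmetic difference is that the paper uses Theorem~2.7 for the loop case as well (rather than Theorem~2.8); your supplementary concatenation argument and the degeneracy caveats are extra material the paper's proof does not require.
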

\begin{proof}
Since $\gamma_{\rm sarc}^{\lambda,\ell,2}$, $\gamma_{\rm larc}^{\lambda,\ell,2}$, and $\gamma_{\rm loop}^{\lambda,\ell,2}$ are $1$-fold well-periodic curves in the sense of \cite[Definition 2.6]{MY_Crelle} (recall Remark~\ref{rem:m/2-fold}), 
we deduce from \cite[Theorem 2.7]{MY_Crelle} that $\gamma$ is not a local minimizer of $B$ in $A_{\ell,L}$. 
This fact together with \eqref{eq:instability_condition} yields the desired conclusion.
\end{proof}

\if0
\begin{theorem}[Instability of one-mode loop]\label{thm:instability_1_loop}
Let $\gamma \in A_\ell$ be a penalized pinned elastica. 
If the arclength parametrization of $\gamma$ is represented by $\gamma_{\rm loop}^{\lambda,\ell,1}$, 
then $\gamma$ is not a local minimizer of $\mathcal{E}_\lambda$ in $A_\ell$.
\end{theorem}
\begin{proof}
By \eqref{eq:angle_loop}, the tangential angle $\theta_{\rm loop}^{\lambda,\ell,1}$ of $\gamma_{\rm loop}^{\lambda,\ell,1}$ satisfies $\theta_{\rm loop}^{\lambda,\ell,1}(0)\in(0,\frac{\pi}{2})$ and hence $(\gamma_{\rm loop}^{\lambda,\ell,1})'(0) \cdot (P_1-P_0)>0$ with $P_0=(0,0)$ and $P_1=(\ell,0)$.
In addition, it follows from Lemma~\ref{lem:symmetry-PPE} that $\gamma_{\rm loop}^{\lambda,\ell,1}$ is not injective on $(0,L)$, where $L=L[\gamma_{\rm loop}^{\lambda,\ell,1}]$. 
Therefore, by \cite[Theorem 2.8]{MY_Crelle} we see that $\gamma_{\rm loop}^{\lambda,\ell,1}$ is not a local minimizer of $B$ in $A_{\ell,L}$, and combining this fact with \eqref{eq:instability_condition} we obtain the desired conclusion.
\end{proof}
\fi

The proof of Theorem~\ref{thm:stability-PPE} is now already complete.

\begin{proof}[Proof of Theorem~\ref{thm:stability-PPE}]
This is a direct consequence of Theorems~\ref{thm:stability_1-arcs}, \ref{thm:instability_higher_mode}, and \ref{thm:instability_2_mode}, combined with the classification of penalized pinned elasticae in Theorem~\ref{thm:classification-PPE} and the fact that a line segment is a global minimizer.
\end{proof}

\begin{remark}
While $\gamma_{\rm loop}^{\lambda,\ell,1}$ is unstable as in Theorem~\ref{thm:stability-PPE}, it will be shown in Theorem~\ref{thm:unique_nontrivial_PPE} that $\gamma_{\rm loop}^{\lambda,\ell,1}$ is the (unique) minimizer among penalized pinned elasticae except for a trivial global minimizer. Notice carefully that these results are not contradictory. In fact, minimizing a functional among a subset of its critical points is different from minimizing the functional in general.  Since perturbations of critical points are not necessarily critical points, approaches like stability analyses can not be used to address the question of minimality among critical points.

Next we examine the energy landscape in the neighborhood of $\gamma_{\rm loop}^{\lambda, \ell,1}$. 
We can find not only an energy-decreasing perturbation (as in  Lemma~\ref{lem:2nd-derivative}) but also and an energy-increasing perturbation as follows.
Let $\gamma \in A_\ell$ be the reparametrization of $\gamma_{\rm loop}^{\lambda,\ell,1}$, and $0<a<b<1$ be such that $\gamma(a)=\gamma(b)$, i.e.\ the self-intersection point. 
For $|\varepsilon|<1$, define $\gamma_\varepsilon:[0,1]\to\mathbf{R}^2$ by the constant-speed reparametrization of 
\begin{align*}
\begin{cases}
    (\varepsilon+1)\big(\gamma(x)-\gamma(a)\big)+\gamma(a) \quad & x\in[a,b], \\
    \gamma(x) &\text{otherwise}, 
\end{cases}
\end{align*}
i.e., constructed by dilation of the loop. 
Setting 
\[
G(\varepsilon):= \frac{1}{\varepsilon+1}B[\gamma|_{[a,b]}] + (\varepsilon+1)\lambda L[\gamma|_{[a,b]}], 
\]
we can compute the energy gap as $\mathcal{E}_\lambda[\gamma_\varepsilon]-\mathcal{E}_\lambda[\gamma] =G(\varepsilon) - G(0)$.
Thus the second variation along this perturbation is given by $G''(0)=2B[\gamma|_{[a,b]}]>0$. Notice also that the first variation vanishes as $\gamma_{\rm loop}^{\lambda, \ell,1 }$ is a critical point.
Hence, one can find that $\gamma$ is a local minimizer along this direction.
This result together with the existence of a perturbation satisfying \eqref{eq:sign-2nd_derivative_loop} imply that 
$\gamma_{\rm loop}^{\lambda,\ell,1}$ behaves like a saddle point. 
\end{remark}


\section{Energy comparison}\label{sect:energy_comparison}

Next we quantitatively compare the energy among penalized pinned elasticae and as a consequence deduce uniqueness of minimizers of $\mathcal{E}_\lambda$ among penalized pinned elasticae except for a trivial line segment.

To begin with we compute the energy of each penalized pinned elastica using the formulae in Definition~\ref{def:sarc_larc_loop}. 
\begin{lemma}\label{lem:energy-formula}
Let $\gamma_{\rm sarc}^{\lambda,\ell,n}$, $\gamma_{\rm larc}^{\lambda,\ell,n}$, $\gamma_{\rm loop}^{\lambda,\ell,n}$ be as in Definition~\ref{def:sarc_larc_loop}. Then, the energy of $\gamma_{\rm sarc}^{\lambda,\ell,n}$, $\gamma_{\rm larc}^{\lambda,\ell,n}$, $\gamma_{\rm loop}^{\lambda,\ell,n}$ is given by 
\begin{align}\label{eq:energy-PPE}
\mathcal{E}_{i,n}:=\frac{8n^2}{\ell} \big|2\mathrm{E}(q_{i,n})-\mathrm{K}(q_{i,n}) \big| \big( (4q_{i,n}^2 -3 ) \mathrm{K}(q_{i,n})+2\mathrm{E}(q_{i,n}) \big)
\end{align}
with $i=1$, $i=2$, and $i=3$, respectively, where $q_{i,n}=q_i(\frac{\lambda\ell^2}{n^2})$ as in Definition~\ref{def:sarc_larc_loop}.
\if0
\begin{align}
    \mathcal{E}_\lambda[\gamma_{\rm sarc}^{\lambda,\ell,n}] &= \frac{8n^2 }{\ell}\big(2\mathrm{E}(q_{1,n})-\mathrm{K}(q_{1,n}) \big) \big( (4q_{1,n}^2 -3 ) \mathrm{K}(q_{1,n})+2\mathrm{E}(q_{1,n}) \big), \label{eq:energy-sarc}\\ 
    \mathcal{E}_\lambda[\gamma_{\rm larc}^{\lambda,\ell,n}] &= \frac{8n^2 }{\ell} \big(2\mathrm{E}(q_{2,n})-\mathrm{K}(q_{2,n}) \big) \big( (4q_{2,n}^2 -3 ) \mathrm{K}(q_{2,n})+2\mathrm{E}(q_{2,n}) \big), \label{eq:energy-larc}\\ 
    \mathcal{E}_\lambda[\gamma_{\rm loop}^{\lambda,\ell,n}] &= \frac{8n^2 }{\ell} \big(\mathrm{K}(q_{3,n}) -2\mathrm{E}(q_{3,n})\big) \big( (4q_{3,n}^2 -3 ) \mathrm{K}(q_{3,n})+2\mathrm{E}(q_{3,n}) \big). \label{eq:energy-loop}
    \end{align}
\fi
\end{lemma}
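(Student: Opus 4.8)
The statement is a direct computation from the explicit data recorded in Definition~\ref{def:sarc_larc_loop}, carried out uniformly in the three cases. Throughout write $q = q_{i,n}$ and $\alpha = \alpha_{i,n}$. In each of the three families the signed curvature has the form $k(s) = \pm 2\alpha q\,\cn(\alpha s - \mathrm{K}(q), q)$, the length is $L := L[\gamma] = 2n\mathrm{K}(q)/\alpha$, and $\alpha = \tfrac{2n}{\ell}\lvert 2\mathrm{E}(q) - \mathrm{K}(q)\rvert > 0$; moreover the parameters satisfy the relation
\[
\lambda = 2\alpha^2(2q^2 - 1),
\]
which for these curves one checks either from \eqref{eq:lambda-Linner} (via Theorem~\ref{thm:classification-PPE}) or directly from $\alpha^2 = \tfrac{4n^2}{\ell^2}(2\mathrm{E}(q)-\mathrm{K}(q))^2$, the definition \eqref{eq:def-g} of $g$, and the identity $g(q_{i,n}) = \lambda\ell^2/n^2$ built into Definition~\ref{def:hat-q_lam}.

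First I would compute the bending energy. Since $k^2 = 4\alpha^2 q^2\,\cn^2(\alpha s - \mathrm{K}(q), q)$, the substitution $x = \alpha s - \mathrm{K}(q)$ gives
\[
B[\gamma] = \int_0^L k^2\,\mathrm{d}s = 4\alpha q^2 \int_{-\mathrm{K}(q)}^{(2n-1)\mathrm{K}(q)} \cn^2(x, q)\,\mathrm{d}x .
\]
The interval of integration has length $2n\mathrm{K}(q)$, i.e.\ exactly $n$ periods of $\cn^2(\cdot, q)$, so by the standard antiderivative of $\cn^2$ (Appendix~\ref{sect:elliptic_functions}) this integral equals $n\cdot\tfrac{2}{q^2}\big(\mathrm{E}(q) - (1-q^2)\mathrm{K}(q)\big)$, whence
\[
B[\gamma] = 8n\alpha\big(\mathrm{E}(q) - (1-q^2)\mathrm{K}(q)\big).
\]

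Next, from $L = 2n\mathrm{K}(q)/\alpha$ and $\lambda = 2\alpha^2(2q^2-1)$ I get $\lambda L = 4n\alpha(2q^2-1)\mathrm{K}(q)$, so
\[
\mathcal{E}_\lambda[\gamma] = 8n\alpha\big(\mathrm{E}(q) - (1-q^2)\mathrm{K}(q)\big) + 4n\alpha(2q^2-1)\mathrm{K}(q) = 4n\alpha\big((4q^2-3)\mathrm{K}(q) + 2\mathrm{E}(q)\big).
\]
Substituting $4n\alpha = \tfrac{8n^2}{\ell}\lvert 2\mathrm{E}(q) - \mathrm{K}(q)\rvert$ then yields \eqref{eq:energy-PPE} with $q = q_{i,n}$. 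The three cases $i = 1, 2, 3$ are genuinely identical at this level of computation; they differ only in which root $q_i(\lambda\ell^2/n^2)$ of $g(q) = \lambda\ell^2/n^2$ is inserted, and in the sign of $2\mathrm{E}(q) - \mathrm{K}(q)$, which is exactly why the absolute value appears in the final formula.

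There is essentially no obstacle here; the only places requiring a little care are counting the number of periods of $\cn^2$ inside $[0, L]$ correctly, and checking that the sign conventions of Definition~\ref{def:sarc_larc_loop} make $\alpha > 0$ in all three families (so that the uniform formula with $\lvert 2\mathrm{E}(q) - \mathrm{K}(q)\rvert$ is legitimate). Everything else is bookkeeping with the elliptic-integral identities collected in the appendix.
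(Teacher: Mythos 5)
Your proposal is correct and follows essentially the same route as the paper: compute $B[\gamma]=8n\alpha\big(q^2\mathrm{K}(q)-\mathrm{K}(q)+\mathrm{E}(q)\big)$ from the $\cn^2$ integral over $n$ periods, add $\lambda L=4n\alpha(2q^2-1)\mathrm{K}(q)$ using the parameter relation, and substitute $\alpha=\tfrac{2n}{\ell}\lvert 2\mathrm{E}(q)-\mathrm{K}(q)\rvert$. The only (harmless) difference is that you treat the three cases uniformly via the absolute value, whereas the paper carries out case (i) and notes the others are identical.
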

\begin{proof}
We only demonstrate the case of $\gamma_{\rm sarc}^{\lambda,\ell,n}$ since the other cases can be deduced in the same way.
Let $n\geq n_{\lambda,\ell}$, $\alpha=\alpha_{1,n}$, and $q=q_1(\frac{\lambda\ell^2}{n^2})$.
Since
\begin{align}\label{eq:computationOFB}
\begin{split}
B[\gamma_{\rm sarc}^{\lambda,\ell,n}]
&=\int_{0}^L |k_{\rm sarc}^{\lambda,\ell,n}(s)|^2 \; \mathrm{d}s
=\int_0^{\frac{2n\mathrm{K}(q)}{\alpha}} |2\alpha q\cn(\alpha s-\mathrm{K}(q)), q)|^2 \; \mathrm{d}s \\
&=4\alpha q^2 \int_{-\mathrm{K}(q)}^{(2n-1)\mathrm{K}(q)} |\cn(x, q)|^2 \; \mathrm{d}x 
=8n\alpha q^2 \cdot \frac{q^2 \mathrm{K}(q)-\mathrm{K}(q)+\mathrm{E}(q)}{q^2} \\
&=8n\alpha (q^2 \mathrm{K}(q)-\mathrm{K}(q)+\mathrm{E}(q) ), 
\end{split}
\end{align}
combining this with $L[\gamma_{\rm sarc}^{\lambda,\ell,n}]=2n{\mathrm{K}(q)}/{\alpha}$  we see that
\begin{align*}
&B[\gamma_{\rm sarc}^{\lambda,\ell,n}] + \lambda L[\gamma_{\rm sarc}^{\lambda,\ell,n}] \\
= &\ 8n\alpha (q^2 \mathrm{K}(q)-\mathrm{K}(q)+\mathrm{E}(q) ) + 2n\lambda \frac{\mathrm{K}(q)}{\alpha} \notag\\
= &\ \frac{8}{\ell}n^2 \big|2\mathrm{E}(q)-\mathrm{K}(q) \big|\Big( 2\big( q^2 \mathrm{K}(q)-\mathrm{K}(q)+\mathrm{E}(q) \big) +(2q^2-1)\mathrm{K}(q) \Big),
\end{align*}
where in the last equality we used $\alpha={2n}\ell^{-1}(2\mathrm{E}(q)-\mathrm{K}(q))$ as in \eqref{eq:q-alpha-sarc} and \eqref{eq:lambda-Linner} $\lambda=n^2\ell^{-2}g(q)$.
The proof is complete.
\end{proof}

To compare the energy of each penalized pinned elastica, we prepare some functions to quantitatively characterize the energy as follows.
Let $e:(\frac{1}{\sqrt{2}},1) \setminus\{q_*\} \to \mathbf{R}$ and $h:(\frac{1}{\sqrt{2}},1) \to \mathbf{R}$ be defined by
\begin{align}
e(q)&:=\frac{(4q^2-3)\mathrm{K}(q)+2\mathrm{E}(q)}{(2q^2-1)|2\mathrm{E}(q)-\mathrm{K}(q)|}, \label{eq:def-e} \\
h(q)&:=\frac{1}{\sqrt{2q^2-1}} \big((4q^2 - 3) \mathrm{K}(q) + 2 \mathrm{E}(q) \big). \label{eq:def-h}
\end{align}
With the aid of the functions $e$ and $h$ one can provide two distinct formulae for the energies of penalized pinned elasticae.
By Lemma~\ref{lem:energy-formula} and \eqref{eq:cond-q-penapinned} we see that 
\begin{align}\label{eq:energy-e-represent}
\begin{split}
\mathcal{E}_{i,n}
&= \frac{8}{\ell}\lambda\ell^2\cdot \frac{n^2}{\lambda\ell^2}\big|2\mathrm{E}(q_{i,n})-\mathrm{K}(q_{i,n}) \big| \big( (4q_{i,n}^2 -3 ) \mathrm{K}(q_{i,n})+2\mathrm{E}(q_{i,n}) \big) \\
&= 8\lambda\ell g(q_{i,n})^{-1}\big|2\mathrm{E}(q_{i,n})-\mathrm{K}(q_{i,n}) \big| \big( (4q_{i,n}^2 -3 ) \mathrm{K}(q_{i,n})+2\mathrm{E}(q_{i,n}) \big)\\
&= \lambda\ell e(q_{i,n}),
\end{split}
\end{align}
which will be useful when we investigate how the energy depends on $n\in\mathbf{N}$ (see Lemma~\ref{lem:energy-comparison-mode}). 
On the other hand, since $\lambda \ell^2 = n^2g(q_{i,n})$ implies that $|2\mathrm{E}(q_{i,n})-\mathrm{K}(q_{i,n})|=\frac{1}{2\sqrt{2}}n\sqrt{\lambda}\ell(2q_{i,n}^2-1)^{-\frac{1}{2}}$, we have
\begin{align}\label{eq:energy-larc-part2} 
\begin{split}
\mathcal{E}_{i,n}
&= \frac{8n^2}{\ell} \frac{ \sqrt{\lambda} \ell}{2\sqrt{2}n\sqrt{2q_{i,n}^2 -1}}\big( (4q_{i,n}^2 -3 ) \mathrm{K}(q_{i,n})+2\mathrm{E}(q_{i,n}) \big) \\
&= 2\sqrt{2}n\sqrt{\lambda} h(q_{i,n}),
\end{split}
\end{align}
which will be useful for comparing the energy of $\gamma^{\lambda,\ell,1}_{\rm larc}$ and $\gamma^{\lambda,\ell,1}_{\rm loop}$ (see Lemma~\ref{lem:energy-comparison-sarc_vs_larc}). 

We exhibit some elementary properties of $e$ and $h$ in the following lemmas, whose proofs are 
postponed to Appendix~\ref{sect:proof-fgeh}.

\begin{lemma}\label{lem:property-e}
Let $e:(\frac{1}{\sqrt{2}},1) \setminus\{q_*\} \to \mathbf{R}$ be the function defined by \eqref{eq:def-e}.
Then,  
\begin{align}\label{eq:diff-e}
e'(q)=-\frac{4f(q)\mathrm{K}'(q)}{(2q^2-1)^2(2\mathrm{E}(q)-\mathrm{K}(q))|2\mathrm{E}(q)-\mathrm{K}(q)|}, \quad q\in (\tfrac{1}{\sqrt{2}},1) \setminus\{q_*\},
\end{align}
where $f$ is the function defined by \eqref{eq:def-f}.
In particular, $e$ is
\begin{align} \label{eq:monotonicity-e}
\text{ strictly decreasing in } (\tfrac{1}{\sqrt{2}}, \hat{q}) \cup (q_*,1), \quad 
\text{ strictly increasing in } (\hat{q}, q_*).
\end{align}
\end{lemma}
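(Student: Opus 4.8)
The plan is to verify the derivative formula \eqref{eq:diff-e} by a direct differentiation and then read off the monotonicity \eqref{eq:monotonicity-e} from the sign of $f$, which is already known from Lemma~\ref{lem:property-f}. First I would write $e = \frac{p}{r}$ with numerator $p(q) := (4q^2-3)\mathrm{K}(q) + 2\mathrm{E}(q)$ and denominator $r(q) := (2q^2-1)\,|2\mathrm{E}(q)-\mathrm{K}(q)|$; since $e$ is only considered on $(\tfrac{1}{\sqrt{2}},1)\setminus\{q_*\}$, on each of the two intervals $(\tfrac{1}{\sqrt{2}},q_*)$ and $(q_*,1)$ the sign of $2\mathrm{E}(q)-\mathrm{K}(q)$ is constant (positive, resp.\ negative, by Lemma~\ref{lem:elliptic_2E-K}), so the absolute value can be replaced by $\pm(2\mathrm{E}(q)-\mathrm{K}(q))$ and $r$ is smooth there. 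I would then apply the quotient rule, using the standard derivative formulae for the complete elliptic integrals (referenced in the excerpt as \eqref{eq:diff-elliptic-int}, namely $\mathrm{K}'(q) = \frac{\mathrm{E}(q)-(1-q^2)\mathrm{K}(q)}{q(1-q^2)}$ and $\mathrm{E}'(q) = \frac{\mathrm{E}(q)-\mathrm{K}(q)}{q}$), to compute $p'$ and $r'$.

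The main computational step is to simplify $p' r - p r'$. After substituting the derivative formulae and clearing the common factor $\tfrac{1}{q(1-q^2)}$, both $p'$ and $r'$ become polynomials in $q^2$ times $\mathrm{K}(q)$ plus polynomials times $\mathrm{E}(q)$; hence $p'r - pr'$ is a quadratic form in $\mathrm{K}(q)$ and $\mathrm{E}(q)$ with polynomial coefficients. The claim is that this quadratic form collapses, up to an explicit nonvanishing prefactor, to exactly $-4 f(q)\,\mathrm{K}'(q)$ with $f$ as in \eqref{eq:def-f}; equivalently, after writing everything over $q(1-q^2)$, the $\mathrm{K}^2$, $\mathrm{K}\mathrm{E}$, and $\mathrm{E}^2$ coefficients must match those obtained by expanding $-4\big[(4q^4-5q^2+1)\mathrm{K} + (-8q^4+8q^2-1)\mathrm{E}\big]\cdot\frac{\mathrm{E}-(1-q^2)\mathrm{K}}{q(1-q^2)}$. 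This is a finite polynomial identity in $q$ that I would verify coefficient by coefficient; it is routine but the bookkeeping with the signs coming from $|2\mathrm{E}-\mathrm{K}|$ is the place most likely to trip up, so I would keep the two cases $q<q_*$ and $q>q_*$ explicitly in view — note that in the stated formula \eqref{eq:diff-e} the denominator is written as $(2\mathrm{E}-\mathrm{K})\,|2\mathrm{E}-\mathrm{K}|$, which is precisely $\operatorname{sign}(2\mathrm{E}-\mathrm{K})\,(2\mathrm{E}-\mathrm{K})^2$ and so already encodes the sign flip across $q_*$.

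Once \eqref{eq:diff-e} is established, the monotonicity statement \eqref{eq:monotonicity-e} is immediate: in the formula for $e'(q)$ the factor $(2q^2-1)^2(2\mathrm{E}(q)-\mathrm{K}(q))\,|2\mathrm{E}(q)-\mathrm{K}(q)| = (2q^2-1)^2\operatorname{sign}(2\mathrm{E}(q)-\mathrm{K}(q))(2\mathrm{E}(q)-\mathrm{K}(q))^2$ has the sign of $2\mathrm{E}(q)-\mathrm{K}(q)$, which is positive on $(\tfrac{1}{\sqrt{2}},q_*)$ and negative on $(q_*,1)$, while $\mathrm{K}'(q)>0$ throughout $(\tfrac1{\sqrt2},1)$. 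Hence $e'(q)$ has the opposite sign to $f(q)\cdot\operatorname{sign}(2\mathrm{E}(q)-\mathrm{K}(q))$. By Lemma~\ref{lem:property-f}, $f>0$ on $(\tfrac{1}{\sqrt{2}},\hat q)$ and $f<0$ on $(\hat q,1)$, and since $\hat q<q_*$ one gets $e'<0$ on $(\tfrac{1}{\sqrt{2}},\hat q)$, $e'>0$ on $(\hat q,q_*)$, and on $(q_*,1)$ the sign of $2\mathrm{E}-\mathrm{K}$ has flipped while $f$ is still negative, giving $e'<0$ there. This yields exactly \eqref{eq:monotonicity-e}. I expect the only genuine obstacle to be the polynomial-identity verification in the middle step; everything else is sign-chasing using results already available in the excerpt.
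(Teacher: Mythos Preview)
Your proposal is correct and follows essentially the same approach as the paper: the paper sets $\zeta(q)=(4q^2-3)\mathrm{K}(q)+2\mathrm{E}(q)$ and $\xi(q)=(2q^2-1)(2\mathrm{E}(q)-\mathrm{K}(q))$, computes $\zeta'\xi-\zeta\xi'$ explicitly using \eqref{eq:diff-elliptic-int}, factors the resulting quadratic expression in $\mathrm{K}$ and $\mathrm{E}$ as $-4f(q)\mathrm{K}'(q)$, and then handles the two intervals separated by $q_*$ exactly as you describe to account for the absolute value. Your sign analysis for the monotonicity conclusion also matches the paper's.
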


\begin{lemma}\label{lem:property-h}
Let $h:(\frac{1}{\sqrt{2}},1)  \to \mathbf{R}$ be the function defined by \eqref{eq:def-h}.
Then,  
\begin{align}\label{eq:diff-h}
h'(q) = \frac{-f(q)}{(2q^2-1)^\frac{3}{2} q(1-q^2)},
\end{align}
where $f$ is the function defined by \eqref{eq:def-f}. 
In particular, $h$ is decreasing on $(\frac{1}{\sqrt{2}},\hat{q}]$ and increasing on $[\hat{q},1)$.
\end{lemma}

Recall that we interpret the number $n \in \mathbf{N}$ as a \emph{mode}.
Therefore, as in \cite{Ydcds}, it will be naturally expected that, for instance, the energy $\mathcal{E}_\lambda[\gamma_{\rm sarc}^{\lambda,\ell,n}]$ gets larger if $n$ is larger.
However, this does not directly follow since the moduli $q= q_{i,n}$ depend on the choice of $n$, cf.\ Definition \ref{def:sarc_larc_loop}. 
In fact, combining the formula
\begin{align}\label{eq:diff-phi}
\frac{d}{dq}\Big(8|2\mathrm{E}(q)-\mathrm{K}(q)|\big((4q^2-3)\mathrm{K}(q)+2\mathrm{E}(q) \big) \Big) = \begin{cases} \frac{16}{q(1-q^2)}f(q)\mathrm{K}(q) & q < q_* \\ - \frac{16}{q(1-q^2)}f(q)\mathrm{K}(q) & q > q_* \end{cases}
\end{align}
with the fact that $n\mapsto q_{1,n}$ is increasing (cf.\ Definition~\ref{def:hat-q_lam}), we see that $n\mapsto  \eta(n):= |2\mathrm{E}(q_{1,n})-\mathrm{K}(q_{1,n}) | ( (4q_{1,n}^2 -3 ) \mathrm{K}(q_{1,n})+2\mathrm{E}(q_{1,n}))$ is decreasing with respect to $n\in \mathbf{N}$.
This implies that $\mathcal{E}_\lambda[\gamma_{\rm sarc}^{\lambda,\ell,n}]$ consists of the increasing factor $n^2$ and the decreasing factor $ \eta(n)$ with respect to $n\in\mathbf{N}$. 

Nevertheless we can obtain the following monotonicity.

\begin{lemma}\label{lem:energy-comparison-mode}
Let $\lambda>0$, $\ell>0$, and $n\in \mathbf{N}$.
Let $\gamma_{\rm sarc}^{\lambda,\ell,n}$, $\gamma_{\rm larc}^{\lambda,\ell,n}$, and $\gamma_{\rm loop}^{\lambda,\ell,n}$ be the curves defined in Definition~\ref{def:sarc_larc_loop}.
Then, if $n<m$, the following inequalities hold:
\begin{align}\label{eq:energy-PPE-monotone-n}
\mathcal{E}_\lambda[\gamma_{\rm sarc}^{\lambda,\ell,n}] < \mathcal{E}_\lambda[\gamma_{\rm sarc}^{\lambda,\ell,m}], \quad 
\mathcal{E}_\lambda[\gamma_{\rm larc}^{\lambda,\ell,n}] < \mathcal{E}_\lambda[\gamma_{\rm larc}^{\lambda,\ell,m}], \quad 
\mathcal{E}_\lambda[\gamma_{\rm loop}^{\lambda,\ell,n}] < \mathcal{E}_\lambda[\gamma_{\rm loop}^{\lambda,\ell,m}]. 
\end{align}
\end{lemma}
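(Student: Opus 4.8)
The plan is to use the representation $\mathcal{E}_{i,n} = \lambda\ell\, e(q_{i,n})$ from \eqref{eq:energy-e-represent}. Since $\lambda\ell$ is a fixed positive constant, comparing $\mathcal{E}_\lambda[\gamma^{\lambda,\ell,n}]$ and $\mathcal{E}_\lambda[\gamma^{\lambda,\ell,m}]$ reduces to comparing $e(q_{i,n})$ and $e(q_{i,m})$, so it suffices to track how the moduli $q_{i,n}=q_i(\tfrac{\lambda\ell^2}{n^2})$ move as $n$ increases and to combine this with the monotonicity of $e$ from Lemma~\ref{lem:property-e}. The key observation is that $n\mapsto \tfrac{\lambda\ell^2}{n^2}$ is strictly decreasing, so if $n<m$ then $c_n := \tfrac{\lambda\ell^2}{n^2} > c_m := \tfrac{\lambda\ell^2}{m^2}$, and I then feed this through the definitions of $q_1,q_2,q_3$ in Definition~\ref{def:hat-q_lam}, using that $g$ is strictly monotone on each of the intervals $(\tfrac{1}{\sqrt 2},\hat q]$, $[\hat q,q_*]$, $[q_*,1)$ by Lemma~\ref{lem:property-g}.

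First I would treat the loop case. On $(q_*,1)$ the function $g$ is strictly monotone, hence $q_3$ is monotone in $c$; since $g(q_*)=0$ is the local minimum and $g$ increases toward $1$ (one checks the sign from \eqref{eq:diff-g} and Lemma~\ref{lem:property-f}, noting $2\mathrm{E}-\mathrm{K}<0$ and $f<0$ on $(q_*,1)$ give $g'>0$ there), larger $c$ means larger $q_3$. Thus $q_{3,n} > q_{3,m}$ when $n<m$. Now $e$ is strictly decreasing on $(q_*,1)$ by \eqref{eq:monotonicity-e}, so $e(q_{3,n}) < e(q_{3,m})$, i.e. $\mathcal{E}_\lambda[\gamma_{\rm loop}^{\lambda,\ell,n}] < \mathcal{E}_\lambda[\gamma_{\rm loop}^{\lambda,\ell,m}]$. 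Next, for the shorter and longer arcs, note that for the modulus $q_{i,n}$ with $i=1,2$ to be defined one needs $c_n\le\hat\lambda$, and since $c_m<c_n\le\hat\lambda$ this is automatic; this is exactly the statement $n\ge n_{\lambda,\ell}$ being monotone in the right direction. On $(\tfrac{1}{\sqrt2},\hat q]$, $g$ is strictly increasing (again from \eqref{eq:diff-g}: there $2\mathrm{E}-\mathrm{K}>0$ and $f>0$), so $q_1$ is strictly increasing in $c$, giving $q_{1,n} > q_{1,m}$; and $e$ is strictly decreasing on $(\tfrac{1}{\sqrt2},\hat q)$, so $e(q_{1,n}) < e(q_{1,m})$. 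On $[\hat q, q_*)$, $g$ is strictly decreasing, so $q_2$ is strictly \emph{decreasing} in $c$, giving $q_{2,n} < q_{2,m}$; and $e$ is strictly \emph{increasing} on $(\hat q, q_*)$, so again $e(q_{2,n}) < e(q_{2,m})$. Combining the three cases via \eqref{eq:energy-e-represent} yields \eqref{eq:energy-PPE-monotone-n}.

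The one point requiring a little care — and the main (minor) obstacle — is the boundary behaviour at $c=\hat\lambda$, where $q_1(\hat\lambda)=q_2(\hat\lambda)=\hat q$ and $e$ is continuous but not differentiable (it has a local max of the combined factor there, cf.\ $f(\hat q)=0$). If $c_n=\hat\lambda$ exactly, then $q_{1,n}=q_{2,n}=\hat q$ while $q_{1,m}<\hat q<q_{2,m}$, and one must check the strict inequalities still hold; they do, because $e$ is strictly decreasing on $(\tfrac1{\sqrt2},\hat q)$ and strictly increasing on $(\hat q,q_*)$, so $e(q_{1,m})>e(\hat q)=e(q_{1,n})$ and $e(q_{2,m})>e(\hat q)=e(q_{2,n})$. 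One should also note the degenerate possibility that $q_{i,n}=q_*$ is excluded since $q_i$ for $i=1,2$ lands in $(\tfrac1{\sqrt2},q_*)$ and for $i=3$ in $(q_*,1)$, so $e$ is always evaluated away from its singularity. Apart from this bookkeeping the argument is a direct chaining of the monotonicity lemmas.
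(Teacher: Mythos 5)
Your proof is correct and follows essentially the same route as the paper: it uses the representation $\mathcal{E}_{i,n}=\lambda\ell\, e(q_{i,n})$ from \eqref{eq:energy-e-represent}, derives the monotone dependence of $q_{i,n}$ on $n$ from the monotonicity of $g$ (Lemma~\ref{lem:property-g}), and concludes via the monotonicity of $e$ (Lemma~\ref{lem:property-e}). The extra care you take at the boundary case $\tfrac{\lambda\ell^2}{n^2}=\hat\lambda$ is a welcome refinement that the paper's proof leaves implicit.
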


\begin{proof}
By the property of $g$ (cf.\ Lemma~\ref{lem:property-g}), for each $i\in \{1,2,3\}$, $q_i(\frac{\lambda\ell^2}{n^2})$ satisfies
\begin{align*}
    q_1(\tfrac{\lambda\ell^2}{n^2})>q_1(\tfrac{\lambda\ell^2}{m^2}), \quad 
    q_2(\tfrac{\lambda\ell^2}{n^2})<q_2(\tfrac{\lambda\ell^2}{m^2}), \quad 
    q_3(\tfrac{\lambda\ell^2}{n^2})>q_3(\tfrac{\lambda\ell^2}{m^2}), \quad \text{if}\quad n<m.
\end{align*}
Combining this with \eqref{eq:energy-e-represent} and Lemma~\ref{lem:property-e}, we obtain the desired monotonicity \eqref{eq:energy-PPE-monotone-n}.
\end{proof}


Here we show that $\mathcal{E}_\lambda[\gamma_{\rm larc}^{\lambda,\ell,n}] \leq \mathcal{E}_\lambda[\gamma_{\rm sarc}^{\lambda,\ell,n}]$ holds for all $n\geq n_{\lambda,\ell}$.

\begin{lemma}[Energy comparison of shorter arc and longer arc]\label{lem:energy-comparison-sarc_vs_larc}
Let $\lambda,\ell>0$ and $n\geq n_{\lambda,\ell}$ be an integer.
Then, the curves $\gamma_{\rm sarc}^{\lambda,\ell,n}$ and $\gamma_{\rm larc}^{\lambda,\ell,n}$ defined in Definition~\ref{def:sarc_larc_loop} satisfy
\begin{align}\label{eq:comparison_arc}
\mathcal{E}_\lambda[\gamma_{\rm larc}^{\lambda,\ell,n}] \leq \mathcal{E}_\lambda[\gamma_{\rm sarc}^{\lambda,\ell,n}].
\end{align}
Equality in \eqref{eq:comparison_arc} is attained if and only if $\lambda\ell^2=m^2\hat{\lambda}$ for some $m\in \mathbf{N}$ and $n=n_{\lambda,\ell}$.
\end{lemma}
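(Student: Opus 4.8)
The plan is to reduce \eqref{eq:comparison_arc} to a single-variable statement and settle it by a monotonicity argument. Set $c := \lambda\ell^2/n^2$. Since $n \ge n_{\lambda,\ell} = \lceil\sqrt{\lambda\ell^2/\hat\lambda}\,\rceil$, we have $c \in (0,\hat\lambda]$, so the moduli $q_{1,n} = q_1(c)$ and $q_{2,n} = q_2(c)$ of Definition~\ref{def:sarc_larc_loop} are defined, and by Lemma~\ref{lem:property-g} they satisfy $\tfrac{1}{\sqrt2} < q_1(c) \le \hat q \le q_2(c) < q_*$, with equality throughout if and only if $c = \hat\lambda$. Using the representation \eqref{eq:energy-larc-part2}, which carries the \emph{same} prefactor $2\sqrt2\,n\sqrt\lambda$ for $i=1$ and $i=2$, the inequality \eqref{eq:comparison_arc} is equivalent to $\psi(c) \le 0$, where
\[
\psi(c) := h(q_2(c)) - h(q_1(c)), \qquad c \in (0,\hat\lambda],
\]
with $h$ as in \eqref{eq:def-h}; note that $\psi(\hat\lambda) = h(\hat q) - h(\hat q) = 0$.

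Next I would differentiate $\psi$. By Lemma~\ref{lem:property-g}, $g$ is $C^1$ and strictly monotone with non-vanishing derivative on $(\tfrac{1}{\sqrt2},\hat q)$ and on $(\hat q,q_*)$, so $q_1$ and $q_2$ are $C^1$ on $(0,\hat\lambda)$ and $\psi \in C^1((0,\hat\lambda)) \cap C^0((0,\hat\lambda])$. From $g(q_i(c)) = c$ one gets $q_i'(c) = 1/g'(q_i(c))$, so by \eqref{eq:diff-g} and \eqref{eq:diff-h}, after cancelling the common factors $f(q_i(c))$ and $q_i(c)(1-q_i(c)^2)$ (legitimate since $q_i(c) \ne \hat q$ for $c < \hat\lambda$),
\[
\frac{d}{dc}\,h(q_i(c)) = \frac{h'(q_i(c))}{g'(q_i(c))} = \frac{-1}{16\,(2q_i(c)^2-1)^{3/2}\,\big(2\mathrm{E}(q_i(c)) - \mathrm{K}(q_i(c))\big)}.
\]
The crucial step is now to eliminate the elliptic integrals using the \emph{definition} \eqref{eq:def-g} of $g$: since $2\mathrm{E}(q) - \mathrm{K}(q) > 0$ for $q \in (\tfrac{1}{\sqrt2}, q_*)$, one has $2\mathrm{E}(q) - \mathrm{K}(q) = \sqrt{g(q)}\,\big/\,\big(2\sqrt2\,\sqrt{2q^2-1}\big)$, whence, recalling $g(q_i(c)) = c$,
\[
\frac{d}{dc}\,h(q_i(c)) = \frac{-1}{4\sqrt2\,(2q_i(c)^2-1)\,\sqrt{c}} \quad (i=1,2), \qquad \psi'(c) = \frac{1}{4\sqrt2\,\sqrt{c}}\left(\frac{1}{2q_1(c)^2-1} - \frac{1}{2q_2(c)^2-1}\right).
\]

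Then I would conclude by the elementary monotonicity of $q \mapsto 2q^2-1$. For $c \in (0,\hat\lambda)$ strict monotonicity of $g$ gives $q_1(c) < \hat q < q_2(c)$, hence $0 < 2q_1(c)^2-1 < 2q_2(c)^2-1$ and therefore $\psi'(c) > 0$ on $(0,\hat\lambda)$; combined with $\psi(\hat\lambda) = 0$ and continuity of $\psi$ at $\hat\lambda$, this yields $\psi(c) < 0$ for all $c \in (0,\hat\lambda)$ and $\psi(\hat\lambda)=0$, which is precisely \eqref{eq:comparison_arc}. For the equality statement, equality holds iff $\psi(\lambda\ell^2/n^2) = 0$ iff $\lambda\ell^2/n^2 = \hat\lambda$, i.e. $\lambda\ell^2 = n^2\hat\lambda$; and then $n_{\lambda,\ell} = \lceil\sqrt{\lambda\ell^2/\hat\lambda}\,\rceil = n$, so this is equivalent to $\lambda\ell^2 = m^2\hat\lambda$ for some $m \in \mathbf{N}$ together with $n = n_{\lambda,\ell}$.

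The main obstacle I anticipate is spotting the simplification of $h'(q)/g'(q)$: carried out naively it still involves $\mathrm{K}$ and $\mathrm{E}$, and its sign along the two branches $q_1,q_2$ is not transparent; the point is that feeding the definition \eqref{eq:def-g} of $g$ back in collapses the ratio to $-1/\big(4\sqrt2\,(2q^2-1)\sqrt{g(q)}\big)$, after which the whole comparison reduces to the triviality $q_1(c) < q_2(c)$. A minor technical point — the failure of $q_1,q_2$ to be differentiable at $c = \hat\lambda$, where $g'$ vanishes — is harmless, since only continuity of $\psi$ up to $\hat\lambda$ together with the value $\psi(\hat\lambda) = 0$ is used there.
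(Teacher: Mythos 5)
Your proof is correct. It follows the same overall strategy as the paper's — reduce to a one-parameter family, differentiate in the penalization parameter, and use that the difference vanishes at the bifurcation value $\hat\lambda$ — but the implementation differs in two useful ways. The paper works with the representation \eqref{eq:energy-PPE} (prefactor $8n^2/\ell$), first proves the case $n=1$ by showing $\Phi(\lambda):=\mathcal{E}_\lambda[\gamma_{\rm sarc}^{\lambda,\ell,1}]-\mathcal{E}_\lambda[\gamma_{\rm larc}^{\lambda,\ell,1}]$ is strictly decreasing with $\Phi(\ell^{-2}\hat\lambda)=0$, where the derivative comes out as $\ell^2\bigl(\tfrac{\mathrm{K}(q_1)}{2\mathrm{E}(q_1)-\mathrm{K}(q_1)}-\tfrac{\mathrm{K}(q_2)}{2\mathrm{E}(q_2)-\mathrm{K}(q_2)}\bigr)$ and its sign requires the monotonicity of $\mathrm{K}$ and of $(2\mathrm{E}-\mathrm{K})^{-1}$ on $(0,q_*)$; it then handles $n\geq 2$ by the rescaling $\lambda'=\lambda/n^2$. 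You instead use the representation \eqref{eq:energy-larc-part2}, factor out the common positive prefactor $2\sqrt2\,n\sqrt\lambda$, and differentiate only $\psi(c)=h(q_2(c))-h(q_1(c))$ in $c=\lambda\ell^2/n^2$; feeding the definition of $g$ back into $h'/g'$ collapses the derivative to $-1/\bigl(4\sqrt2\,(2q_i(c)^2-1)\sqrt c\bigr)$, so the sign question reduces to the trivial inequality $q_1(c)<q_2(c)$, and all modes $n$ are treated uniformly with no separate scaling step. Your handling of the non-differentiability at $c=\hat\lambda$ (continuity of $\psi$ up to the endpoint plus $\psi(\hat\lambda)=0$ suffices) and of the equality characterization is also correct. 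The only dependency worth flagging is that your argument leans on Lemma~\ref{lem:property-h} (i.e.\ formula \eqref{eq:diff-h}) and on the identity \eqref{eq:energy-larc-part2}, both of which the paper establishes independently of the present lemma, so there is no circularity.
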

\begin{proof}
First we show that \eqref{eq:comparison_arc} for the case $\lambda\ell^2\leq\hat{\lambda}$ and $n=n_{\lambda,\ell}=1$, i.e., 
\begin{align}\label{eq:comparison_arc_1stStep}
    \mathcal{E}_\lambda[\gamma_{\rm larc}^{\lambda,\ell,1}] \leq \mathcal{E}_\lambda[\gamma_{\rm sarc}^{\lambda,\ell,1}] \quad \text{for any} \ \ \lambda,\ell>0 \ \ \text{with} \ \ \lambda\ell^2 \leq \hat{\lambda}.
\end{align}
Fix $\ell>0$ and define $\Phi:(0,\ell^{-2}\hat{\lambda}]\to\mathbf{R}$ by
\[
\Phi(\lambda):=\mathcal{E}_\lambda[\gamma_{\rm sarc}^{\lambda,\ell,1}]-\mathcal{E}_\lambda[\gamma_{\rm larc}^{\lambda,\ell,1}].
\]
 Hereafter we prove that $\Phi(\lambda)>0$ for any $\lambda \in(0,\ell^{-2}\hat{\lambda})$ and $\Phi(\ell^{-2}\hat{\lambda})=0$.
Denote $q_1=q_{1,1}(\lambda):= q_1(\lambda \ell^2)$ and $q_2=q_{2,1}(\lambda):= q_2(\lambda \ell^2)$ for short.
By Lemma~\ref{lem:energy-formula}, and the fact that $\lambda=\frac{1}{\ell^2}g(q_1)=\frac{1}{\ell^2}g(q_2)$, we deduce that
\begin{align}\label{eq:diff-Phi_energy}
\begin{split}
    \Phi'(\lambda)=&\,\frac{dq}{d\lambda}\Big|_{q=q_1}\frac{d}{dq}\Big(8|2\mathrm{E}(q)-\mathrm{K}(q)|\big((4q^2-3)\mathrm{K}(q)+2\mathrm{E}(q) \big) \Big)\Big|_{q=q_1} \\
    &- \frac{dq}{d\lambda}\Big|_{q=q_2}\frac{d}{dq}\Big(8|2\mathrm{E}(q)-\mathrm{K}(q)|\big((4q^2-3)\mathrm{K}(q)+2\mathrm{E}(q) \big) \Big)\Big|_{q=q_2} \\
    =&\,\frac{\ell^2}{g'(q_1)}\frac{16}{q_1(1-q_1^2)}f(q_1)\mathrm{K}(q_1)
    -\frac{\ell^2}{g'(q_2)}\frac{16}{q_2(1-q_2^2)}f(q_2)\mathrm{K}(q_2), 
\end{split}
\end{align}
where in the last equality we also used \eqref{eq:diff-phi}. 
Combining this with \eqref{eq:diff-g}, we obtain 
\[
\Phi'(\lambda)=\ell^2\Big(\frac{\mathrm{K}(q_1)}{2\mathrm{E}(q_1)-\mathrm{K}(q_1)} - \frac{\mathrm{K}(q_2)}{2\mathrm{E}(q_2)-\mathrm{K}(q_2)} \Big). 
\]
Since $\mathrm{K}(q)$ and $(2\mathrm{E}(q)-\mathrm{K}(q))^{-1}$ are positive and strictly increasing with respect to $q\in(0,q_*)$ (cf.\ Appendix~\ref{sect:elliptic_functions}), we have $\Phi'(\lambda)<0$ for $\lambda\in (0,\ell^{-2}\hat{\lambda})$. 
Moreover, $\Phi(\ell^{-2}\hat{\lambda})=0$ also follows since $\lambda=\ell^{-2}\hat{\lambda}$ implies that $q_1=q_2=\hat{q}$.
Thus we obtain \eqref{eq:comparison_arc_1stStep}, and equality holds if and only if $\Phi(\lambda)=0$, i.e., $\lambda=\ell^{-2}\hat{\lambda}$.

Next we consider the case $\lambda\ell^2 > \hat{\lambda}$.
Fix $n\geq n_{\lambda, \ell}$ arbitrarily. 
Setting $\lambda'=\frac{1}{n^2}\lambda$ and noting that then $q_1(\frac{\lambda \ell^2}{n^2}) = q_1(\lambda' \ell^2)$, we deduce from Lemma~\ref{lem:energy-formula} that $\mathcal{E}_\lambda[\gamma_{\rm sarc}^{\lambda,\ell,n}]=n^2 \mathcal{E}_{\lambda'}[\gamma_{\rm sarc}^{\lambda',\ell,1}]$ and
$\mathcal{E}_\lambda[\gamma_{\rm larc}^{\lambda,\ell,n}]=n^2 \mathcal{E}_{\lambda'}[\gamma_{\rm larc}^{\lambda',\ell,1}]$, respectively.
Moreover, noting that 
\begin{align}\label{eq:comparison_arc_lambda-hat}
\lambda'=\frac{1}{n^2}\lambda \leq \frac{1}{n_{\lambda,\ell}^2}\lambda \leq \ell^{-2}\hat{\lambda},
\end{align}
and applying \eqref{eq:comparison_arc_1stStep} to $\lambda=\lambda'$, we obtain 
\[\mathcal{E}_\lambda[\gamma_{\rm larc}^{\lambda,\ell,n}]=n^2 \mathcal{E}_{\lambda'}[\gamma_{\rm larc}^{\lambda',\ell,1}]\leq n^2 \mathcal{E}_{\lambda'}[\gamma_{\rm sarc}^{\lambda',\ell,1}]=\mathcal{E}_\lambda[\gamma_{\rm sarc}^{\lambda,\ell,n}].
\]
Equality in the above inequality holds if and only if $\lambda'=\ell^{-2}\hat{\lambda}$, i.e., equality holds for all the inequalities in \eqref{eq:comparison_arc_lambda-hat}, which is equivalent to $\sqrt{\frac{\lambda\ell^2}{\hat{\lambda}}}\in\mathbf{N}$ and $n=n_{\lambda,\ell}$.
\end{proof}

\begin{lemma}[Energy comparison of one-mode longer arc and one-mode loop]\label{lem:energy-comparison-arc_vs_loop}
If $\lambda>0$ and $\ell>0$ satisfy 
$\lambda\ell^2 \leq\hat{\lambda}$, then
\[\mathcal{E}_\lambda[\gamma_{\rm larc}^{\lambda,\ell,1}]<\mathcal{E}_\lambda[\gamma_{\rm loop}^{\lambda,\ell,1}],
\]
where the left-hand side is interpreted as $\mathcal{E}_\lambda[\gamma_{\rm larc}^{\lambda,\ell,1}] = \mathcal{E}_\lambda[\gamma_{\rm sarc}^{\lambda,\ell,1}]$ if $\lambda\ell^2 =\hat{\lambda}$.
\end{lemma}
\begin{proof}
We deduce from \eqref{eq:energy-larc-part2} that 
$\mathcal{E}_\lambda[\gamma_{\rm larc}^{\lambda,\ell,1}] = 2\sqrt{2} \sqrt{\lambda}h(q_{2,1})$ and $\mathcal{E}_\lambda[\gamma_{\rm loop}^{\lambda,\ell,1}] = 2\sqrt{2} \sqrt{\lambda}h(q_{3,1})$. 
Lemma~\ref{lem:property-h} and the fact that $\hat{q} \leq q_{2,1}<q_*<q_{3,1}$ imply that $h(q_{2,1})<h(q_{3,1})$, so that $\mathcal{E}_\lambda[\gamma_{\rm larc}^{\lambda,\ell,1}]<\mathcal{E}_\lambda[\gamma_{\rm loop}^{\lambda,\ell,1}]$. 
\end{proof}


On the other hand, the energy of $\gamma_{\rm larc}^{\lambda,\ell,n}$ with $n\geq2$ is higher than that of $\gamma_{\rm loop}^{\lambda,\ell,1}$:

\begin{lemma}[Energy comparison of higher-mode longer arc and one-mode loop]\label{lem:energy-comparison-arc_vs_loop_2}
Let $\lambda>0$, $\ell>0$, and $n\geq2$. 
Then, 
\[\mathcal{E}_\lambda[\gamma_{\rm larc}^{\lambda,\ell,n}]>\mathcal{E}_\lambda[\gamma_{\rm loop}^{\lambda,\ell,1}].
\]
\end{lemma}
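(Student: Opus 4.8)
The plan is to reduce the inequality $\mathcal{E}_\lambda[\gamma_{\rm larc}^{\lambda,\ell,n}] > \mathcal{E}_\lambda[\gamma_{\rm loop}^{\lambda,\ell,1}]$ for $n \geq 2$ to a comparison of the auxiliary function $h$ at suitable moduli, using the representation \eqref{eq:energy-larc-part2}, namely $\mathcal{E}_{i,n} = 2\sqrt{2}\,n\sqrt{\lambda}\,h(q_{i,n})$. Applying this with $i=2$ and the given $n$, and with $i=3$, $n=1$, we have
\[
\mathcal{E}_\lambda[\gamma_{\rm larc}^{\lambda,\ell,n}] = 2\sqrt{2}\,n\sqrt{\lambda}\,h(q_{2,n}), \qquad \mathcal{E}_\lambda[\gamma_{\rm loop}^{\lambda,\ell,1}] = 2\sqrt{2}\sqrt{\lambda}\,h(q_{3,1}),
\]
so it suffices to prove $n\,h(q_{2,n}) > h(q_{3,1})$. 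Since $n \geq 2$, it is enough to show $2\,h(q_{2,n}) > h(q_{3,1})$, or even the cleaner bound $h(q_{2,n}) \geq \tfrac12 h(q_{3,1})$ together with strictness from $n\geq 2$. By Lemma~\ref{lem:property-h}, $h$ attains its minimum over $(\tfrac{1}{\sqrt2},1)$ at $\hat q$, and $q_{2,n} \in [\hat q, q_*)$ always, so $h(q_{2,n}) \geq h(\hat q)$; on the other side $q_{3,1}\in(q_*,1)$.

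Thus the crux is an estimate comparing $h(\hat q)$ (a universal constant) with $h(q_{3,1})$, where $q_{3,1} = q_3(\lambda\ell^2)$ ranges over $(q_*,1)$ as $\lambda\ell^2$ ranges over $(0,\infty)$. Since $h$ is increasing on $[\hat q,1)$ and $h(q)\to\infty$ as $q\to 1$, the ratio $h(q_{3,1})/h(\hat q)$ is unbounded, so a naive global bound fails; the argument must exploit the constraint linking $q_{3,1}$ to $n$ via the mode count. Here is where $n_{\lambda,\ell}$ enters: the modulus $q_{2,n}$ only exists when $n \geq n_{\lambda,\ell} = \lceil \sqrt{\lambda\ell^2/\hat\lambda}\,\rceil$, which forces $\lambda\ell^2 \leq n^2\hat\lambda$, equivalently $\lambda\ell^2/n^2 \leq \hat\lambda$. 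But $q_{3,1}$ is governed by $g(q_{3,1}) = \lambda\ell^2$, not $\lambda\ell^2/n^2$; whereas $q_{2,n}$ is governed by $g(q_{2,n}) = \lambda\ell^2/n^2$. So the correct reformulation is: with $c := \lambda\ell^2$, we must show $n\,h(q_2(c/n^2)) > h(q_3(c))$ for every $n\geq 2$ and every $c>0$ such that $q_2(c/n^2)$ is defined (i.e. $c/n^2 \le \hat\lambda$, so in particular $c \le 4\hat\lambda$ when $n=2$, and more generally $c \le n^2\hat\lambda$).

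I would then split according to the size of $c$. When $c \le \hat\lambda$, both $q_3(c)$ stays in a bounded subinterval of $(q_*,1)$ away from $1$ and $q_2(c/n^2)$ stays near where $h$ is small but controlled, so a direct monotonicity/continuity comparison — possibly invoking the already-proven Lemma~\ref{lem:energy-comparison-arc_vs_loop} and Lemma~\ref{lem:energy-comparison-mode} — closes the case: indeed for $c\le\hat\lambda$ one has $\mathcal{E}_\lambda[\gamma_{\rm larc}^{\lambda,\ell,n}] \ge \mathcal{E}_\lambda[\gamma_{\rm larc}^{\lambda,\ell,2}] > \mathcal{E}_\lambda[\gamma_{\rm larc}^{\lambda,\ell,1}] $ but that goes the wrong way against the loop, so instead I expect to need $\mathcal{E}_\lambda[\gamma_{\rm larc}^{\lambda,\ell,n}] \ge \mathcal{E}_\lambda[\gamma_{\rm larc}^{\lambda,\ell,2}] = 4\mathcal{E}_{\lambda/4}[\gamma_{\rm larc}^{\lambda/4,\ell,1}] $ and a comparison of $4 h(q_2(c/4))$ with $h(q_3(c))$ directly. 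For large $c$, $q_3(c)\to 1$ and $h(q_3(c))$ blows up like $\mathrm{K}(q_3(c))$, while $q_2(c/n^2)\to\hat q$ only if $c/n^2$ is bounded — but $c$ can be as large as $n^2\hat\lambda$, so $q_2(c/n^2)$ can sit anywhere in $[\hat q, q_*)$, where $h$ is bounded above by $h(q_*^-)$, a finite constant. Hence for large $c$ we would genuinely use that $n$ is forced to be large ($n \ge \sqrt{c/\hat\lambda}$), giving $n\,h(q_{2,n}) \ge \sqrt{c/\hat\lambda}\cdot h(\hat q)$, and we must beat $h(q_3(c))$; since $h(q_3(c)) \sim (4q_3^2-3)\mathrm{K}(q_3)/\sqrt{2q_3^2-1} \to \infty$ only logarithmically in $1/(1-q_3)$ while $c = g(q_3) = 8(2\mathrm{E}-\mathrm{K})^2(2q_3^2-1)$ stays bounded as $q_3\to1$ (in fact $g(q_3)\to 8(2-\infty)\cdots$ — one must check the actual limit of $g$ at $1$), the growth rates must be reconciled. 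I expect the main obstacle to be exactly this asymptotic bookkeeping near $q\to1$: one must pin down $\lim_{q\to1}g(q)$ and the rate at which $h(q_3(c))$ grows relative to $\sqrt{c}$, to confirm that $\sqrt{c/\hat\lambda}\,h(\hat q)$ dominates $h(q_3(c))$ uniformly. If $g$ is bounded on $(q_*,1)$ (which it is, by Lemma~\ref{lem:property-g}, since $g$ is monotone on $[q_*,1)$ with a finite limit), then $c$ is bounded whenever $q_{3,1}$ is forced large, which in turn bounds the required $n$, and the whole problem collapses to a compact-interval estimate that can be verified by the monotonicity of $h$ and a single explicit numerical comparison $2 h(\hat q) > \sup_{q\in(q_*,1)} h(q)$ — if that numerical inequality holds, the proof is essentially immediate and the "hard part" evaporates; if it fails, one must track the $n$-dependence more carefully as sketched.
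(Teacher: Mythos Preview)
Your reduction to $n\,h(q_{2,n}) > h(q_{3,1})$ via \eqref{eq:energy-larc-part2} is correct, and the observation $h(q_{2,n}) \geq h(\hat q)$ from Lemma~\ref{lem:property-h} is useful. But from there the proposal goes off the rails on two factual points.

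First, $g$ is \emph{not} bounded on $(q_*,1)$: since $\mathrm{K}(q)\to\infty$ as $q\to 1$, one has $(2\mathrm{E}(q)-\mathrm{K}(q))^2\to\infty$ and hence $g(q)\to\infty$. Lemma~\ref{lem:property-g} says $g$ is strictly increasing on $[q_*,1)$, not that it has a finite limit. Consequently $c=\lambda\ell^2$ is unbounded and your ``compact-interval'' collapse does not occur. Second, the numerical inequality you propose to check, $2h(\hat q) > \sup_{q\in(q_*,1)} h(q)$, is false for the same reason: $h(q)\to\infty$ as $q\to 1$. So neither endgame you sketch can close the argument.

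What you are missing is that for fixed $n$, the constraint $c \le n^2\hat\lambda$ caps $q_{3,1}=q_3(c)$ at the specific value $\rho_n:=q_3(n^2\hat\lambda)$, not at an arbitrary point of $(q_*,1)$. The paper exploits this by showing (Lemma~\ref{lem:Psi-decreasing}) that $\Psi_{\ell,n}(\lambda):=n\,h(q_2(\tfrac{\lambda\ell^2}{n^2}))-h(q_3(\lambda\ell^2))$ is strictly decreasing in $\lambda$, so the worst case is the endpoint $\lambda=n^2\ell^{-2}\hat\lambda$, where $q_2=\hat q$ and $q_3=\rho_n$. The problem thus reduces to the single inequality $n\,h(\hat q)>h(\rho_n)$, in which $\rho_n$ is tied to $n$ through $g(\rho_n)=n^2 g(\hat q)$. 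That relation is then used algebraically (Lemma~\ref{lem:part-comparison}) to eliminate $\mathrm{K}(\rho_n)$ and reduce to an explicit polynomial inequality in $\hat q$, which is verified using the enclosure $\tfrac35<\hat q^2<\tfrac23$ from Lemma~\ref{lem:hat-q-character}. Your sketch never isolates this coupling between $n$ and $\rho_n$, which is the actual mechanism that defeats the blow-up of $h$ near $1$.
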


We split the proof of Lemma~\ref{lem:energy-comparison-arc_vs_loop_2} into two lemmas. 
First we investigate how the energy $\mathcal{E}_\lambda[\gamma_{\rm larc}^{\lambda,\ell,n}]$ and $\mathcal{E}_\lambda[\gamma_{\rm loop}^{\lambda,\ell,1}]$ depends on $\lambda$. 
Here recall from Theorem~\ref{thm:classification-PPE} that if $\gamma_{\rm larc}^{\lambda,\ell,n}$ exists then we have necessarily $n\geq n_{\lambda,\ell}$, i.e., $\lambda \leq n^2\ell^{-2}\hat{\lambda}$.
Also recall from \eqref{eq:energy-larc-part2} and Definition \ref{def:sarc_larc_loop} that $\mathcal{E}_\lambda[\gamma_{\rm larc}^{\lambda,\ell,n}]=2\sqrt{2}n\sqrt{\lambda} h(q_2(\frac{\lambda\ell^2}{n^2}))$ and $\mathcal{E}_\lambda[\gamma_{\rm loop}^{\lambda,\ell,1}]=2\sqrt{2}\sqrt{\lambda} h(q_3(\lambda\ell^2))$.


\begin{lemma}\label{lem:Psi-decreasing}
Let $\ell>0$, $n\geq2$. 
Then, the map
\begin{align}\label{eq:def-Psi}
\Psi_{\ell,n}:\big(0, n^2\ell^{-2}\hat{\lambda}\big] \ni \lambda \mapsto n h\big(q_2(\tfrac{\lambda\ell^2}{n^2})\big) - h\big(q_3(\lambda\ell^2)\big)
\end{align}
is strictly decreasing on $(0,n^2\ell^{-2}\hat{\lambda}]$.
\end{lemma}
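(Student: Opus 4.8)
The plan is to differentiate $\Psi_{\ell,n}$ with respect to $\lambda$ and show the derivative is negative on the whole interval. Writing $q_2 = q_2(\tfrac{\lambda\ell^2}{n^2})$ and $q_3 = q_3(\lambda\ell^2)$, the chain rule gives
\[
\Psi_{\ell,n}'(\lambda) = n\, h'(q_2)\, \frac{dq_2}{d\lambda} - h'(q_3)\, \frac{dq_3}{d\lambda}.
\]
The derivatives $\tfrac{dq_i}{d\lambda}$ are obtained by implicitly differentiating the defining relations $n^2 g(q_2) = \lambda\ell^2$ and $g(q_3) = \lambda\ell^2$ (see Definition~\ref{def:hat-q_lam} and \eqref{eq:cond-q-penapinned}), which yield $\tfrac{dq_2}{d\lambda} = \tfrac{\ell^2}{n^2 g'(q_2)}$ and $\tfrac{dq_3}{d\lambda} = \tfrac{\ell^2}{g'(q_3)}$. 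Substituting the formula \eqref{eq:diff-h} for $h'$ and the formula \eqref{eq:diff-g} for $g'$, the factors of $f$ cancel, leaving an expression in which the sign is governed by elementary algebraic/elliptic-integral quantities rather than by $f$ (whose sign changes at $\hat{q}$). This cancellation is the key simplification: both $h'$ and $g'$ carry a factor $f(q)$, so the ratio $h'(q_i)/g'(q_i)$ is $f$-free.

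Concretely, after the cancellation one finds
\[
n\, h'(q_2)\,\frac{dq_2}{d\lambda} = \frac{\ell^2}{n}\cdot \frac{-1}{(2q_2^2-1)^{3/2}\,q_2(1-q_2^2)} \cdot \frac{q_2(1-q_2^2)}{16\big(2\mathrm{E}(q_2)-\mathrm{K}(q_2)\big)} = \frac{-\ell^2}{16 n (2q_2^2-1)^{3/2}\big(2\mathrm{E}(q_2)-\mathrm{K}(q_2)\big)},
\]
and the analogous term for $q_3$ (with $n$ replaced by $1$ and noting $2\mathrm{E}(q_3)-\mathrm{K}(q_3) < 0$ for $q_3 \in (q_*,1)$). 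Thus
\[
\Psi_{\ell,n}'(\lambda) = \frac{-\ell^2}{16}\left( \frac{1}{n(2q_2^2-1)^{3/2}\big(2\mathrm{E}(q_2)-\mathrm{K}(q_2)\big)} - \frac{1}{(2q_3^2-1)^{3/2}\big(2\mathrm{E}(q_3)-\mathrm{K}(q_3)\big)} \right).
\]
Since $2\mathrm{E}(q_2)-\mathrm{K}(q_2)>0$ while $2\mathrm{E}(q_3)-\mathrm{K}(q_3)<0$ (Lemma~\ref{lem:elliptic_2E-K}), the two terms in the parenthesis have opposite signs: the first is positive, the second is negative, so their difference is strictly positive, and hence $\Psi_{\ell,n}'(\lambda) < 0$ throughout $(0,n^2\ell^{-2}\hat{\lambda}]$. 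Therefore $\Psi_{\ell,n}$ is strictly decreasing, as claimed.

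The main obstacle I anticipate is purely bookkeeping: one must be careful with the absolute values hidden in the definitions of $h$ (on $(q_*,1)$ the quantity $2q^2-1$ is still positive, so $h$ itself is unambiguous, but in assembling $e$ and the energy formulae signs flip), and with verifying that $q_2$ and $q_3$ remain in their respective monotonicity intervals $(\hat{q},q_*)$ and $(q_*,1)$ for all $\lambda$ in the stated range — this is exactly where the endpoint $\lambda = n^2\ell^{-2}\hat{\lambda}$ matters, since there $q_2 = \hat{q}$ and the interior formula $g'(q_2)=0$ would be problematic; at that single endpoint one checks the one-sided derivative directly using $g''(\hat{q})<0$ (from Lemma~\ref{lem:property-g} and $f'(\hat{q})<0$), so $\tfrac{dq_2}{d\lambda}$ stays bounded and the sign conclusion persists. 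No genuinely hard estimate is needed; the content is the $f$-cancellation plus the sign dichotomy $2\mathrm{E}-\mathrm{K} \gtrless 0$ across $q_*$.
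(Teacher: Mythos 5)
Your proof is correct, and up to the last step it follows the same route as the paper's: differentiate $\Psi_{\ell,n}$, obtain $\frac{dq_i}{d\lambda}$ by implicit differentiation of $n^2g(q_2)=\lambda\ell^2$ and $g(q_3)=\lambda\ell^2$, and use the cancellation of $f$ between \eqref{eq:diff-h} and \eqref{eq:diff-g} to arrive at
\[
\Psi_{\ell,n}'(\lambda) \;=\; -\frac{\ell^2}{16\,n\,(2q_2^2-1)^{3/2}\bigl(2\mathrm{E}(q_2)-\mathrm{K}(q_2)\bigr)} \;+\; \frac{\ell^2}{16\,(2q_3^2-1)^{3/2}\bigl(2\mathrm{E}(q_3)-\mathrm{K}(q_3)\bigr)},
\]
which is exactly your display. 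Where you genuinely differ is the final sign determination. You observe that \emph{both} summands are negative, because $2\mathrm{E}-\mathrm{K}$ is positive on $(0,q_*)$ and negative on $(q_*,1)$ (Lemma~\ref{lem:elliptic_2E-K}) and $q_2<q_*<q_3$; this makes the conclusion immediate, and it is corroborated by the elementary check that $\lambda\mapsto nh(q_2(\lambda\ell^2/n^2))$ and $\lambda\mapsto -h(q_3(\lambda\ell^2))$ are each decreasing (by the monotonicity of $h$, of $g$ on the relevant branches, and hence of $q_2,q_3$). The paper instead substitutes the identity $n\sqrt{2q_2^2-1}\,(2\mathrm{E}(q_2)-\mathrm{K}(q_2))=\sqrt{2q_3^2-1}\,(\mathrm{K}(q_3)-2\mathrm{E}(q_3))$ coming from $n^2g(q_2)=g(q_3)$ to merge the two terms into a single positive prefactor times $\bigl(-1+\tfrac{2q_2^2-1}{2q_3^2-1}\bigr)$ and then uses $q_2<q_3$. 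Your argument is shorter and does not need that extra merging step; both reach the correct conclusion.

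Two small caveats, neither of which invalidates your proof. First, your remark that $\tfrac{dq_2}{d\lambda}$ ``stays bounded'' at the right endpoint is not accurate: since $g'(\hat q)=0$, the map $\lambda\mapsto q_2(\lambda\ell^2/n^2)$ has an infinite one-sided derivative at $\lambda=n^2\ell^{-2}\hat\lambda$. What actually saves the day is that the product $h'(q_2)\,\tfrac{dq_2}{d\lambda}$ extends continuously there, precisely because of the $f$-cancellation ($h'(\hat q)=0$ compensates $g'(\hat q)=0$); more simply, strict monotonicity on the interval including the closed right endpoint already follows from $\Psi_{\ell,n}'<0$ on the open interval together with continuity of $\Psi_{\ell,n}$ at the endpoint, so no one-sided derivative is needed at all, and no appeal to $g''(\hat q)<0$ is required. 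Second, a complete write-up should note explicitly that $q_2(\lambda\ell^2/n^2)\in(\hat q,q_*)$ and $q_3(\lambda\ell^2)\in(q_*,1)$ throughout the stated range of $\lambda$, which is immediate from Definition~\ref{def:hat-q_lam} and is what licenses the sign statements for $2\mathrm{E}-\mathrm{K}$ and the monotonicity of the branches of $g$.
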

\begin{proof}
Noting that $q_2(\tfrac{\lambda\ell^2}{n^2})$ and $q_3(\lambda\ell^2)$ are given by $\lambda=\ell^{-2}n^2g(q_2(\tfrac{\lambda\ell^2}{n^2}))$ and $\lambda=\ell^{-2}g(q_3({\lambda\ell^2}))$, respectively, we compute 
\begin{align*}
    \Psi_{\ell,n}'(\lambda) &= nh'\big(q_2(\tfrac{\lambda\ell^2}{n^2})\big)\frac{d q_2(\tfrac{\lambda\ell^2}{n^2})}{d\lambda} - h'\big(q_3(\lambda\ell^2) \big)\frac{d q_3({\lambda\ell^2})}{d\lambda} \\
    &
    = nh'\big(q_2(\tfrac{\lambda\ell^2}{n^2})\big)\frac{\ell^2}{n^2g'(q_2(\tfrac{\lambda\ell^2}{n^2}))} - h'\big(q_3({\lambda\ell^2})\big)\frac{\ell^2}{g'(q_3(\lambda\ell^2))} \\
    &= -\frac{1}{n}\cdot\frac{\ell^2}{16(2q_2(\tfrac{\lambda\ell^2}{n^2})^2-1)^{\frac{3}{2}} (2\mathrm{E}(q_2(\tfrac{\lambda\ell^2}{n^2})) -\mathrm{K}(q_2(\tfrac{\lambda\ell^2}{n^2})))} \\
    &\quad +\frac{\ell^2}{16(2q_3(\lambda\ell^2)^2-1)^{\frac{3}{2}} (\mathrm{K}(q_3(\lambda\ell^2)) - 2\mathrm{E}(q_3(\lambda\ell^2)))}, 
\end{align*}
where in the last equality we used the derivative formulae \eqref{eq:diff-g} and \eqref{eq:diff-h}. 
Since $\lambda=\ell^{-2}n^2g(q_2(\tfrac{\lambda\ell^2}{n^2}))=\ell^{-2}g(q_3({\lambda\ell^2}))$ implies that $n(2q_2(\tfrac{\lambda\ell^2}{n^2})^2-1)^{\frac{1}{2}}(2\mathrm{E}(q_2(\tfrac{\lambda\ell^2}{n^2})) -\mathrm{K}(q_2(\tfrac{\lambda\ell^2}{n^2}))) = (2q_3(\lambda\ell^2)^2-1)^{\frac{1}{2}}(\mathrm{K}(q_3(\lambda\ell^2)) - 2\mathrm{E}(q_3(\lambda\ell^2)))$   
 we compute 
\begin{align*}
    \Psi_{\ell,n}'(\lambda)&=-\frac{1}{n}\cdot\frac{\ell^2}{16(2q_2(\tfrac{\lambda\ell^2}{n^2})^2-1)^{\frac{3}{2}} (2\mathrm{E}(q_2(\tfrac{\lambda\ell^2}{n^2})) -\mathrm{K}(q_2(\tfrac{\lambda\ell^2}{n^2})))} \\
    &\quad +\frac{\ell^2}{16n(2q_3(\lambda\ell^2)^2-1) (2q_2(\tfrac{\lambda\ell^2}{n^2})^2-1)^{\frac{1}{2}} (2\mathrm{E}(q_2(\tfrac{\lambda\ell^2}{n^2})) -\mathrm{K}(q_2(\tfrac{\lambda\ell^2}{n^2})))} \\
    &=\frac{\ell^2}{16n(2q_2(\tfrac{\lambda\ell^2}{n^2})^2-1)^{\frac{3}{2}}(2\mathrm{E}(q_2(\tfrac{\lambda\ell^2}{n^2})) -\mathrm{K}(q_2(\tfrac{\lambda\ell^2}{n^2})))}\Big(-1+\frac{2q_2(\tfrac{\lambda\ell^2}{n^2})^2-1}{2q_3(\lambda\ell^2)^2-1}\Big).
\end{align*}
Since $q_2(\tfrac{\lambda\ell^2}{n^2})<q_*<q_3(\lambda\ell^2)$, $\Psi_{\ell,n}'(\lambda)$ takes a negative value for each $\lambda \in (0,n^2 \ell^{-2} \hat{\lambda})$. 
The claim follows. 
\end{proof}

\begin{lemma}\label{lem:part-comparison}
For any $\ell>0$ and $n\geq2$, the map $\Psi_{\ell,n}$ defined by \eqref{eq:def-Psi} satisfies 
\[
\Psi_{\ell,n}(\lambda) >0 \quad \text{for all} \ \ \lambda \in \big(0, n^2\ell^{-2}\hat{\lambda} \big]. 
\] 
\end{lemma}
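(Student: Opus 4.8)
The plan is to use Lemma~\ref{lem:Psi-decreasing} to reduce the inequality to the single value $\lambda=n^2\ell^{-2}\hat\lambda$: since $\Psi_{\ell,n}$ is strictly decreasing on $(0,n^2\ell^{-2}\hat\lambda]$, it is enough to prove $\Psi_{\ell,n}(n^2\ell^{-2}\hat\lambda)>0$. At this endpoint $\tfrac{\lambda\ell^2}{n^2}=\hat\lambda$, so $q_2(\tfrac{\lambda\ell^2}{n^2})=\hat q$ by the convention in Definition~\ref{def:hat-q_lam}, and therefore $\Psi_{\ell,n}(n^2\ell^{-2}\hat\lambda)=n\,h(\hat q)-h\big(q_3(n^2\hat\lambda)\big)$.

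The key observation is the elementary identity $h(q)=\tfrac{1}{2\sqrt2}\sqrt{g(q)}\,e(q)$ on $(\tfrac1{\sqrt2},1)\setminus\{q_*\}$, immediate by inserting the definitions \eqref{eq:def-g}, \eqref{eq:def-e}, \eqref{eq:def-h} and simplifying. Since $g(\hat q)=\hat\lambda$ and $g\big(q_3(n^2\hat\lambda)\big)=n^2\hat\lambda$ by definition of $q_3$, this turns the last quantity into
\[
\Psi_{\ell,n}(n^2\ell^{-2}\hat\lambda)=\frac{n\sqrt{\hat\lambda}}{2\sqrt2}\Big(e(\hat q)-e\big(q_3(n^2\hat\lambda)\big)\Big),
\]
so it remains to prove $e(\hat q)>e\big(q_3(n^2\hat\lambda)\big)$ for all integers $n\geq2$. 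Because $g$ is strictly increasing on $(q_*,1)$ (Lemma~\ref{lem:property-g}), the value $q_3(n^2\hat\lambda)$ is increasing in $n$, so $q_3(n^2\hat\lambda)\geq q_3(4\hat\lambda)>q_*$; and $e$ is strictly decreasing on $(q_*,1)$ (Lemma~\ref{lem:property-e}), hence $e\big(q_3(n^2\hat\lambda)\big)\leq e\big(q_3(4\hat\lambda)\big)$. Thus the whole lemma reduces to the single inequality $e\big(q_3(4\hat\lambda)\big)<e(\hat q)$.

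To finish I would exhibit an explicit modulus $q_0\in(q_*,1)$ with $g(q_0)\leq4\hat\lambda$ and $e(q_0)<e(\hat q)$: then $4\hat\lambda=g\big(q_3(4\hat\lambda)\big)\geq g(q_0)$ forces $q_3(4\hat\lambda)\geq q_0$ (monotonicity of $g$ on $(q_*,1)$), and monotonicity of $e$ there gives $e\big(q_3(4\hat\lambda)\big)\leq e(q_0)<e(\hat q)$. One can bound $e(\hat q)$ from below by noting that $f(\hat q)=0$ makes $e(\hat q)$ an algebraic function of $\hat q^2$, which is then controlled by the estimate $\tfrac35<\hat q^2<\tfrac23$ of Lemma~\ref{lem:hat-q-character}; the quantities $g(q_0)$, $e(q_0)$ and the lower bound on $\hat\lambda$ needed here are elementary estimates for $\mathrm{K}$ and $\mathrm{E}$ at the two points $q_0$ and $\hat q$. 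I expect this final quantitative comparison to be the only real obstacle, since the margin between $e\big(q_3(4\hat\lambda)\big)$ and $e(\hat q)$ is only moderate and the elliptic integrals must be pinned down carefully, whereas all the preceding reductions are immediate from the lemmas already established.
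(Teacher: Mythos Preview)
Your reduction is correct and takes a genuinely different route from the paper. The identity $h=\tfrac{1}{2\sqrt2}\sqrt{g}\,e$ is valid on $(\tfrac{1}{\sqrt2},1)\setminus\{q_*\}$, and it neatly converts the endpoint value $\Psi_{\ell,n}(n^2\ell^{-2}\hat\lambda)=nh(\hat q)-h(q_3(n^2\hat\lambda))$ into $\tfrac{n\sqrt{\hat\lambda}}{2\sqrt2}\big(e(\hat q)-e(q_3(n^2\hat\lambda))\big)$; the monotonicity of $e$ on $(q_*,1)$ then reduces everything to the single inequality $e(q_3(4\hat\lambda))<e(\hat q)$. That is a cleaner reduction than the paper's.

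However, the proof is incomplete: you correctly identify the remaining inequality as ``the only real obstacle'' but do not carry it out. Exhibiting a suitable $q_0\in(q_*,1)$ and verifying $g(q_0)\le4\hat\lambda$ and $e(q_0)<e(\hat q)$ requires, as you note, careful estimates of $\mathrm{K}$ and $\mathrm{E}$ at $q_0$ together with a lower bound on $\hat\lambda$ --- none of which is supplied. The paper takes a different path that avoids both an auxiliary point $q_0$ and any estimate on $\hat\lambda$: writing $\rho_n:=q_3(n^2\hat\lambda)$, it uses the constraint $g(\hat q)=n^{-2}g(\rho_n)$ to eliminate $\mathrm{K}(\rho_n)$ from $h(\rho_n)$, bounds the remaining $\mathrm{E}(\rho_n)$ by $\mathrm{E}(\hat q)$ via monotonicity of $\mathrm{E}$, and then invokes $f(\hat q)=0$ to turn $nh(\hat q)-h(\rho_n)>0$ into an explicit polynomial inequality in $\hat q^2\in(\tfrac35,\tfrac23)$, which is checked by elementary calculus. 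Your route could presumably be completed with comparable effort, but as written the gap is exactly the step you flagged.
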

\begin{proof}
In view of Lemma~\ref{lem:Psi-decreasing}, it suffices to show that $\Psi_{\ell,n}(n^2\ell^{-2}\hat{\lambda})>0$. 
By definition $q_2(\frac{\lambda\ell^2}{n^2})=\hat{q}$ holds when $\lambda= n^2\ell^{-2}\hat{\lambda}$.
Throughout this proof write $\rho_n:=q_3(n^2\hat{\lambda})$ for short. 
Then, the problem is reduced to the positivity of $\Psi_{\ell,n}(n^2\ell^{-2}\hat{\lambda})=nh(\hat{q})-h(\rho_n)$.
Since $\hat{\lambda}=g(\hat{q})=n^{-2}g(\rho_n)$ yields (after taking square roots) that 
\begin{align}\label{eq:hat_q-rho_n-relation}
    \sqrt{2\rho_n^2-1}\mathrm{K}(\rho_n)=2\sqrt{2\rho_n^2-1}\mathrm{E}(\rho_n)+n\sqrt{2\hat{q}^2-1}(2\mathrm{E}(\hat{q})-\mathrm{K}(\hat{q})), 
\end{align} 
it follows that 
\begin{align*}
    h(\rho_n)&=\frac{(4\rho_n^2-3)\mathrm{K}(\rho_n)+2\mathrm{E}(\rho_n)}{\sqrt{2\rho_n^2-1}}
    =2\sqrt{2\rho_n^2-1}\mathrm{K}(\rho_n)+\frac{2\mathrm{E}(\rho_n)-\mathrm{K}(\rho_n)}{\sqrt{2\rho_n^2-1}} \\
    &=2\Big(2\sqrt{2\rho_n^2-1}\mathrm{E}(\rho_n)+n\sqrt{2\hat{q}^2-1}(2\mathrm{E}(\hat{q})-\mathrm{K}(\hat{q}))\Big) + \frac{2\mathrm{E}(\rho_n)-\mathrm{K}(\rho_n)}{\sqrt{2\rho_n^2-1}}.
\end{align*}
Moreover, using \eqref{eq:hat_q-rho_n-relation} again, and noting that $\frac{1}{\sqrt{2}} < \rho_n < 1$, we have
\[
\frac{2\mathrm{E}(\rho_n)-\mathrm{K}(\rho_n)}{\sqrt{2\rho_n^2-1}}
=-\frac{n\sqrt{2\hat{q}^2-1}}{2\rho_n^2-1} \big(2\mathrm{E}(\hat{q})-\mathrm{K}(\hat{q})\big)<-n\sqrt{2\hat{q}^2-1}\big(2\mathrm{E}(\hat{q})-\mathrm{K}(\hat{q})\big), 
\]
which leads to 
\begin{align*}
    h(\rho_n)&<4\mathrm{E}(\hat{q})+2n\sqrt{2\hat{q}^2-1}\mathrm{E}(\hat{q}) -n\sqrt{2\hat{q}^2-1}\mathrm{K}(\hat{q}), 
\end{align*}
where for the first term in the right-hand side we also used $\hat{q}<q_*<\rho_n<1$ and monotonicity of $\mathrm{E}(\cdot)$.
Thus we obtain 
\begin{align*}
    \Psi_{\ell,n}(n^2\ell^{-2}\hat{\lambda})&=n \Big(2\sqrt{2\hat{q}^2-1}\mathrm{K}(\hat{q})+\frac{2\mathrm{E}(\hat{q})-\mathrm{K}(\hat{q})}{\sqrt{2\hat{q}^2-1}}\Big) - h(\rho_n) \\
    &>3n\sqrt{2\hat{q}^2-1}\mathrm{K}(\hat{q})+n\frac{2\mathrm{E}(\hat{q})-\mathrm{K}(\hat{q})}{\sqrt{2\hat{q}^2-1}}-4\mathrm{E}(\hat{q})-2n\sqrt{2\hat{q}^2-1}\mathrm{E}(\hat{q})\\
    &=\frac{1}{\sqrt{2\hat{q}^2-1}}\Big(2n(3\hat{q}^2-2)\mathrm{K}(\hat{q})+4\big(n(1-\hat{q}^2)-\sqrt{2\hat{q}^2-1}\big)\mathrm{E}(\hat{q})\Big).
\end{align*}
Set $d_n:=2n(3\hat{q}^2-2)\mathrm{K}(\hat{q})+4(n(1-\hat{q}^2)-\sqrt{2\hat{q}^2-1})\mathrm{E}(\hat{q})$ and hereafter we show that $d_n>0$. 
Since $\hat{q}$ is a solution of $f(q)=0$, $\hat{q}$ satisfies $(-4\hat{q}^4+5\hat{q}^2-1)\mathrm{K}(\hat{q})=(-8\hat{q}^4+8\hat{q}^2-1)\mathrm{E}(\hat{q})$.
This yields 
\begin{align*}
    d_n&=2n(3\hat{q}^2-2)\frac{-8\hat{q}^4+8\hat{q}^2-1}{-4\hat{q}^4+5\hat{q}^2-1}\mathrm{E}(\hat{q})+4\big(n(1-\hat{q}^2)-\sqrt{2\hat{q}^2-1}\big)\mathrm{E}(\hat{q}) \\
    &=2n \frac{-16\hat{q}^6+22\hat{q}^4-7\hat{q}^2}{-4\hat{q}^4+5\hat{q}^2-1}\mathrm{E}(\hat{q}) -4\sqrt{2\hat{q}^2-1}\mathrm{E}(\hat{q}).
\end{align*}
Note that $-4\hat{q}^4+5\hat{q}^2-1=(4\hat{q}^2-1)(1-\hat{q}^2)>0$ since $\hat{q}>{1}/{\sqrt{2}}$.
By Lemma~\ref{lem:hat-q-character} and the fact that $[\frac{3}{5},\frac{2}{3}] \ni m \mapsto -16m^3+22m^2-7m$ is a positive function, we also find that $-16\hat{q}^6+22\hat{q}^4-7\hat{q}^2>0$ and hence 
\[
d_n\geq d_2 = \frac{1}{-4\hat{q}^4+5\hat{q}^2-1}\Big(4(-16\hat{q}^6+22\hat{q}^4-7\hat{q}^2) -4\sqrt{2\hat{q}^2-1}(-4\hat{q}^4+5\hat{q}^2-1)\Big)\mathrm{E}(\hat{q}).
\]
In order to deduce that $d_2>0$, it suffices to show the positivity of
\[
\psi(m):=4(-16m^3+22m^2-7m)-4\sqrt{2m-1}(-4m^2+5m-1), \quad m\in[\tfrac{3}{5}, \tfrac{2}{3}],
\]
since $\hat{q}^2\in(\tfrac{3}{5}, \tfrac{2}{3})$.
The fact that $\sqrt{2m-1} \leq 2m-\frac{3}{4}$ yields 
\begin{align*}
\psi(m)&\geq4(-16m^3+22m^2-7m)-4(2m-\tfrac{3}{4})(-4m^2+5m-1) \\
&= -32m^3+36m^2-5m-3.
\end{align*}
Notice that $m\mapsto -32m^3+36m^2-5m-3$ is strictly increasing in $[\tfrac{3}{5}, \tfrac{2}{3}]$. 
Indeed, the derivative $m \mapsto -96m^2 + 72 m -5$ has roots $m_{1,2} = \frac{3 \pm \sqrt{\frac{17}{3}}}{8}$, implying that $m_1 < 0$ and $m_2 > \frac{2}{3}$.
Due to this monotonicity we infer $\psi(m)\geq -32(\tfrac{3}{5})^3+36(\tfrac{3}{5})^2-5\cdot\tfrac{3}{5}-3=\frac{6}{125}>0$.
The positivity of $\Psi_{\ell,n}(n^2\ell^{-2}\hat{\lambda})$ follows and the proof is complete.
\end{proof}

Now we turn to the proof of Lemma~\ref{lem:energy-comparison-arc_vs_loop_2}. 
\begin{proof}[Proof of Lemma~\ref{lem:energy-comparison-arc_vs_loop_2}]
The combination of the energy representation formula as in \eqref{eq:energy-larc-part2} and Lemma~\ref{lem:part-comparison} yields 
\begin{align*}
\mathcal{E}_\lambda[\gamma_{\rm larc}^{\lambda,\ell,n}] - \mathcal{E}_\lambda[\gamma_{\rm loop}^{\lambda,\ell,1}]
&=2\sqrt{2}\sqrt{\lambda} \Psi_{\ell,n}(\lambda)>0, 
\end{align*}
which completes the proof.
\end{proof}

In addition, as follows one can observe the reversal of the order of the energy between $\gamma_{\rm sarc}^{\lambda,\ell,1}$ and  $\gamma_{\rm loop}^{\lambda,\ell,1}$, depending on $\lambda$. 
\begin{lemma}[Energy comparison of one-mode shorter arc and one-mode loop]\label{lem:energy-comparison-sarc_vs_loop}
Let $\lambda>0$ and $\ell>0$ satisfy $\lambda\ell^2 <\hat{\lambda}$.
Then, there exists a unique $\lambda_\dagger\in (0,\hat{\lambda})$ (independent of $\lambda$ and $\ell$) such that
\[
\mathcal{E}_\lambda[\gamma_{\rm sarc}^{\lambda,\ell,1}]>\mathcal{E}_\lambda[\gamma_{\rm loop}^{\lambda,\ell,1}] \ \  \text{if} \ \ \lambda \ell^2 <\lambda_\dagger \quad \text{and} \quad 
\mathcal{E}_\lambda[\gamma_{\rm sarc}^{\lambda,\ell,1}]<\mathcal{E}_\lambda[\gamma_{\rm loop}^{\lambda,\ell,1}] \ \  \text{if} \ \ \lambda \ell^2 >\lambda_\dagger.
\]
In addition, $\mathcal{E}_\lambda[\gamma_{\rm sarc}^{\lambda,\ell,1}]=\mathcal{E}_\lambda[\gamma_{\rm loop}^{\lambda,\ell,1}]$ if $\lambda \ell^2=\lambda_\dagger$.
\end{lemma}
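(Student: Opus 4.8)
The plan is to reduce the statement to a sign analysis of the single scalar function
\[
D(\lambda):=h\big(q_1(\lambda\ell^2)\big)-h\big(q_3(\lambda\ell^2)\big),\qquad \lambda\in(0,\ell^{-2}\hat\lambda],
\]
where $h$ is the function in \eqref{eq:def-h} and $q_1,q_3$ are as in Definition~\ref{def:hat-q_lam}. Indeed, by the energy representation \eqref{eq:energy-larc-part2} applied with $n=1$ (to $i=1$ and $i=3$), one has $\mathcal{E}_\lambda[\gamma_{\rm sarc}^{\lambda,\ell,1}]=2\sqrt2\sqrt\lambda\,h(q_1(\lambda\ell^2))$ and $\mathcal{E}_\lambda[\gamma_{\rm loop}^{\lambda,\ell,1}]=2\sqrt2\sqrt\lambda\,h(q_3(\lambda\ell^2))$, hence
\[
\mathcal{E}_\lambda[\gamma_{\rm sarc}^{\lambda,\ell,1}]-\mathcal{E}_\lambda[\gamma_{\rm loop}^{\lambda,\ell,1}]=2\sqrt2\sqrt\lambda\,D(\lambda),
\]
so the claimed order reversal is precisely the sign pattern of $D$ together with the existence of a unique zero.

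First I would show that $D$ is continuous and strictly decreasing on $(0,\ell^{-2}\hat\lambda]$. Since $g$ is strictly increasing on $(\tfrac1{\sqrt2},\hat q]$ and on $[q_*,1)$ by Lemma~\ref{lem:property-g}, the branch inverses $c\mapsto q_1(c)$ and $c\mapsto q_3(c)$ are continuous and strictly increasing in $c$. Now $q_1(\lambda\ell^2)$ ranges in $(\tfrac1{\sqrt2},\hat q]$, a set on which $h$ is nonincreasing (Lemma~\ref{lem:property-h}), so $\lambda\mapsto h(q_1(\lambda\ell^2))$ is nonincreasing; while $q_3(\lambda\ell^2)$ ranges in $(q_*,1)\subset[\hat q,1)$, a set on which $h$ is strictly increasing, so $\lambda\mapsto h(q_3(\lambda\ell^2))$ is strictly increasing. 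Subtracting gives that $D$ is strictly decreasing.

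Next I would determine the boundary behaviour of $D$. As $\lambda\to0^+$ we have $q_1(\lambda\ell^2)\to\tfrac1{\sqrt2}^+$; since the numerator $(4q^2-3)\mathrm{K}(q)+2\mathrm{E}(q)$ in \eqref{eq:def-h} converges to $2\mathrm{E}(\tfrac1{\sqrt2})-\mathrm{K}(\tfrac1{\sqrt2})>0$ (positivity by Lemma~\ref{lem:elliptic_2E-K}, as $\tfrac1{\sqrt2}<q_*$) while the factor $(2q^2-1)^{-1/2}\to+\infty$, we obtain $h(q_1(\lambda\ell^2))\to+\infty$; at the same time $q_3(\lambda\ell^2)\to q_*$, and $h$ is continuous there, so $h(q_3(\lambda\ell^2))$ stays bounded. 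Hence $D(\lambda)\to+\infty$ as $\lambda\to0^+$. At the other endpoint, $q_1(\hat\lambda)=\hat q$ whereas $\rho:=q_3(\hat\lambda)\in(q_*,1)$ satisfies $\rho>\hat q$, so strict monotonicity of $h$ on $[\hat q,1)$ yields $D(\ell^{-2}\hat\lambda)=h(\hat q)-h(\rho)<0$.

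Finally, $D$ being continuous and strictly decreasing on $(0,\ell^{-2}\hat\lambda]$ with $D(\lambda)\to+\infty$ as $\lambda\to0^+$ and $D(\ell^{-2}\hat\lambda)<0$, there is a unique $\lambda_\dagger\in(0,\ell^{-2}\hat\lambda)$ with $D(\lambda_\dagger)=0$, and $D>0$ on $(0,\lambda_\dagger)$ and $D<0$ on $(\lambda_\dagger,\ell^{-2}\hat\lambda)$; multiplying by the positive factor $2\sqrt2\sqrt\lambda$ gives the assertion. The only mildly delicate point is the blow-up of $h$ at $q=\tfrac1{\sqrt2}$, i.e.\ the divergence of $\mathcal{E}_\lambda[\gamma_{\rm sarc}^{\lambda,\ell,1}]/\sqrt\lambda$ as $\lambda\to0$ (reflecting that the shorter arc degenerates, with bounded energy but with length rescaled away); the rest is monotonicity bookkeeping based on Lemmas~\ref{lem:property-g} and~\ref{lem:property-h}. (One could alternatively read off $D(\ell^{-2}\hat\lambda)\le0$ from Lemma~\ref{lem:energy-comparison-arc_vs_loop} together with $\gamma_{\rm sarc}^{\lambda,\ell,1}=\gamma_{\rm larc}^{\lambda,\ell,1}$ at the threshold, but strictness is cleanest via $h$.)
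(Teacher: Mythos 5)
Your argument is correct, and it takes a genuinely different route from the paper's. The paper works directly with $\phi(\lambda):=\mathcal{E}_\lambda[\gamma_{\rm sarc}^{\lambda,\ell,1}]-\mathcal{E}_\lambda[\gamma_{\rm loop}^{\lambda,\ell,1}]$: it checks $\phi(0^+)>0$ and $\phi(\ell^{-2}\hat{\lambda})<0$ from the explicit energy limits, and then establishes strict monotonicity of $\phi$ itself by differentiating the energy formula \eqref{eq:energy-PPE} through $q_i$ and $g$, which after simplification gives $\phi'(\lambda)=\frac{2\sqrt{2}\,\ell}{\sqrt{\lambda}}\bigl(\mathrm{K}(q_1)\sqrt{2q_1^2-1}-\mathrm{K}(q_3)\sqrt{2q_3^2-1}\bigr)<0$ by monotonicity of $q\mapsto \mathrm{K}(q)\sqrt{2q^2-1}$. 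You instead factor out $2\sqrt{2}\sqrt{\lambda}$ via \eqref{eq:energy-larc-part2} and study the sign of $D(\lambda)=h(q_1(\lambda\ell^2))-h(q_3(\lambda\ell^2))$, whose strict monotonicity needs no new computation: it follows from Lemma~\ref{lem:property-h} (monotonicity of $h$ on either side of $\hat{q}$) combined with monotonicity of the branch inverses $q_1,q_3$ from Lemma~\ref{lem:property-g}. This is more economical and reuses machinery the paper only deploys for the longer-arc versus loop comparison; note that since $\phi=2\sqrt{2}\sqrt{\lambda}\,D$, you are proving monotonicity of a normalized quantity rather than of $\phi$ itself, but the sign pattern — which is all the lemma asserts — is the same, and your endpoint analysis ($D\to+\infty$ as $\lambda\to0^+$ from the blow-up of $h$ at $q=\tfrac{1}{\sqrt{2}}$, and $D(\ell^{-2}\hat{\lambda})=h(\hat{q})-h(\rho)<0$) is sound. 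The paper's approach yields the slightly stronger structural fact that the actual energy gap is monotone in $\lambda$; yours gets the stated conclusion with less calculation.
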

\if 0 
\begin{proof}
For $\ell>0$ we define $\phi_\ell:(0,\hat{\lambda}]\to\mathbf{R}$ by
\[
\phi_\ell(\lambda):=\mathcal{E}_\lambda[\gamma_{\rm sarc}^{\lambda,\ell,1}]-\mathcal{E}_\lambda[\gamma_{\rm loop}^{\lambda,\ell,1}].
\]
From \eqref{eq:energy-PPE} (and the fact that $q_{i,1} = q_i(\lambda\ell^2)$ for $i = 1,2$) one infers that
\begin{equation*}
    \phi_\ell(\lambda) = \frac{1}{\ell} \phi_1(\lambda \ell^2).
\end{equation*}
Since the energy order depends only on the sign of $\phi_\ell$ (and thus of $\phi_1(\cdot \ell^2)$) the above formula allows us to infer the claim from the study of the special case of $\ell= 1$. We therefore prove only the case of $\ell= 1.$ 
The fact that $\gamma_{\rm sarc}^{\hat{\lambda},1,1}  = \gamma_{\rm larc}^{\hat{\lambda},1,1}$ and Lemma~\ref{lem:energy-comparison-arc_vs_loop} yield that $\phi_1(\hat{\lambda}) =\mathcal{E}_{\hat{\lambda}}[\gamma_{\rm larc}^{\hat{\lambda},1,1}]-\mathcal{E}_{\hat{\lambda}}[\gamma_{\rm loop}^{\hat{\lambda},1,1}] < 0$. 
Moreover, noting that $q_1(\lambda)\to 1/\sqrt{2}$ and $q_3(\lambda)\to q_*$ as $\lambda\to0$, we deduce from the energy formulae \eqref{eq:energy-PPE} that
\begin{align*}
    \lim_{\lambda\to0}\mathcal{E}_\lambda[\gamma_{\rm sarc}^{\lambda,1,1}] &= 8 \big(2\mathrm{E}(\tfrac{1}{\sqrt{2}})-\mathrm{K}(\tfrac{1}{\sqrt{2}}) \big)^2>0, \\
    \lim_{\lambda\to0}\mathcal{E}_\lambda[\gamma_{\rm loop}^{\lambda,1,1}] &= 0,
\end{align*}
from which it follows that $\phi_1(0+)>0$. 
Thus, for the desired conclusion it suffices to check that $\phi$ is strictly decreasing on $(0,\hat{\lambda}]$.
As in \eqref{eq:diff-Phi_energy}, we compute
\begin{align*}
\phi'(\lambda)=&\,\frac{dq}{d\lambda}\Big|_{q=q_1}\frac{d}{dq}\Big(8|2\mathrm{E}(q)-\mathrm{K}(q)|\big((4q^2-3)\mathrm{K}(q)+2\mathrm{E}(q) \big) \Big)\Big|_{q=q_1} \\
    &- \frac{dq}{d\lambda}\Big|_{q=q_3}\frac{d}{dq}\Big(8|2\mathrm{E}(q)-\mathrm{K}(q)|\big((4q^2-3)\mathrm{K}(q)+2\mathrm{E}(q) \big) \Big)\Big|_{q=q_3} \\
    =&\,\frac{\ell^2}{g'(q_1)}\frac{16}{q_1(1-q_1^2)}f(q_1)\mathrm{K}(q_1)
    +\frac{\ell^2}{g'(q_3)}\frac{16}{q_3(1-q_3^2)}f(q_3)\mathrm{K}(q_3), \\
    =&\,\ell^2\Big(\frac{\mathrm{K}(q_1)}{2\mathrm{E}(q_1)-\mathrm{K}(q_1)} - \frac{\mathrm{K}(q_3)}{\mathrm{K}(q_3)-2\mathrm{E}(q_3)} \Big). 
\end{align*}
Since $\lambda=g(q_1(\lambda))=g(q_3(\lambda))$  yield 
$2\mathrm{E}(q_1)-\mathrm{K}(q_1)= \frac{1}{2\sqrt{2}}\sqrt{\lambda}(2q_1^2-1)^{-\frac{1}{2}}$ and 
$ \mathrm{K}(q_3) -2\mathrm{E}(q_3)= \frac{1}{2\sqrt{2}}\sqrt{\lambda}(2q_3^2-1)^{-\frac{1}{2}}$, 
we obtain 
\[
    \phi'(\lambda)=\frac{ 2\sqrt{2}}{ \sqrt{\lambda}}\bigg(\mathrm{K}(q_1)\sqrt{2q_1^2-1} - \mathrm{K}(q_3)\sqrt{2q_3^2-1} \bigg).
\]
Then, in view of the fact that the map $q\mapsto \mathrm{K}(q)\sqrt{2q^2-1}$ is strictly increasing, we find that the right-hand side in the above equation takes a negative value for all $\lambda\in(0,\hat{\lambda})$.
The proof is complete.
\end{proof}
\fi 
\begin{proof} 
For $\ell>0$ we define $\phi_\ell:(0,\hat{\lambda}]\to\mathbf{R}$ by
\[
\phi_\ell(\lambda):=\mathcal{E}_\lambda[\gamma_{\rm sarc}^{\lambda,\ell,1}]-\mathcal{E}_\lambda[\gamma_{\rm loop}^{\lambda,\ell,1}].
\]
From \eqref{eq:energy-PPE} (and the fact that $q_{i,1} = q_i(\lambda\ell^2)$ for $i = 1,
3$) one infers that
\begin{equation}\label{eq:phi_scaling_ell}
    \phi_\ell(\lambda) = \frac{1}{\ell} \phi_1(\lambda \ell^2).
\end{equation}
Since the energy order depends only on the sign of $\phi_\ell$ (and thus of $\phi_1(\cdot \ell^2)$) the above formula allows us to infer the claim from the study of the special case of $\ell= 1$. 
The fact that $\gamma_{\rm sarc}^{\hat{\lambda},1,1}  = \gamma_{\rm larc}^{\hat{\lambda},1,1}$ and Lemma~\ref{lem:energy-comparison-arc_vs_loop} yield that $\phi_1(\hat{\lambda}) =\mathcal{E}_{\hat{\lambda}}[\gamma_{\rm larc}^{\hat{\lambda},1,1}]-\mathcal{E}_{\hat{\lambda}}[\gamma_{\rm loop}^{\hat{\lambda},1,1}] < 0$. 
Moreover, noting that $q_1(\lambda)\to 1/\sqrt{2}$ and $q_3(\lambda)\to q_*$ as $\lambda\to0$, we deduce from the energy formulae \eqref{eq:energy-PPE} that
\begin{align*}
    \lim_{\lambda\to0}\mathcal{E}_\lambda[\gamma_{\rm sarc}^{\lambda,1,1}] &= 8 \big(2\mathrm{E}(\tfrac{1}{\sqrt{2}})-\mathrm{K}(\tfrac{1}{\sqrt{2}}) \big)^2>0, \\
    \lim_{\lambda\to0}\mathcal{E}_\lambda[\gamma_{\rm loop}^{\lambda,1,1}] &= 0,
\end{align*}
from which it follows that $\phi_1(0+)>0$. 
Next we show that $\phi_1$ is strictly decreasing on $(0,\hat{\lambda}]$.
As in \eqref{eq:diff-Phi_energy}, we compute
\begin{align*}
\phi_1'(\lambda)=&\,\frac{dq}{d\lambda}\Big|_{q=q_1}\frac{d}{dq}\Big(8|2\mathrm{E}(q)-\mathrm{K}(q)|\big((4q^2-3)\mathrm{K}(q)+2\mathrm{E}(q) \big) \Big)\Big|_{q=q_1} \\
    &- \frac{dq}{d\lambda}\Big|_{q=q_3}\frac{d}{dq}\Big(8|2\mathrm{E}(q)-\mathrm{K}(q)|\big((4q^2-3)\mathrm{K}(q)+2\mathrm{E}(q) \big) \Big)\Big|_{q=q_3} \\
    =&\,\frac{\ell^2}{g'(q_1)}\frac{16}{q_1(1-q_1^2)}f(q_1)\mathrm{K}(q_1)
    +\frac{\ell^2}{g'(q_3)}\frac{16}{q_3(1-q_3^2)}f(q_3)\mathrm{K}(q_3), \\
    =&\,\ell^2\Big(\frac{\mathrm{K}(q_1)}{2\mathrm{E}(q_1)-\mathrm{K}(q_1)} - \frac{\mathrm{K}(q_3)}{\mathrm{K}(q_3)-2\mathrm{E}(q_3)} \Big). 
\end{align*}
Since $\lambda=g(q_1(\lambda))=g(q_3(\lambda))$  yield 
$2\mathrm{E}(q_1)-\mathrm{K}(q_1)= \frac{1}{2\sqrt{2}}\sqrt{\lambda}(2q_1^2-1)^{-\frac{1}{2}}$ and 
$ \mathrm{K}(q_3) -2\mathrm{E}(q_3)= \frac{1}{2\sqrt{2}}\sqrt{\lambda}(2q_3^2-1)^{-\frac{1}{2}}$, 
we obtain 
\[
    \phi_1'(\lambda)=\frac{ 2\sqrt{2}}{ \sqrt{\lambda}}\bigg(\mathrm{K}(q_1)\sqrt{2q_1^2-1} - \mathrm{K}(q_3)\sqrt{2q_3^2-1} \bigg).
\]
Then, in view of the fact that the map $q\mapsto \mathrm{K}(q)\sqrt{2q^2-1}$ is strictly increasing, we find that the right-hand side in the above equation takes a negative value for all $\lambda\in(0,\hat{\lambda})$.
This together with $\phi_1(0+)>0$ and $\phi_1(\hat{\lambda})<0$ implies that there exists a unique $\lambda_\dagger\in(0,\hat{\lambda})$ such that $\phi_1(\lambda_\dagger)=0$. 
By monotonicity of $\phi_1$ and \eqref{eq:phi_scaling_ell} it follows that if $\lambda\ell^2 < \lambda_\dagger$, then
\[
\mathcal{E}_\lambda[\gamma_{\rm sarc}^{\lambda,\ell,1}]-\mathcal{E}_\lambda[\gamma_{\rm loop}^{\lambda,\ell,1}]=\phi_\ell(\lambda)=\frac{1}{\ell}\phi_1(\lambda\ell^2)>\frac{1}{\ell}\phi_1(\lambda_\dagger) =0.
\]
Similarly the remaining assertions follow.
The proof is complete.
\end{proof}

\begin{remark}
This remark summarizes the insights gained in the previous lemmas in the important special case of small $\lambda$, i.e. $\lambda \ell^2 < \hat{\lambda}$. Notice that in this case one has $n_{\lambda, \ell} = 1$. By Lemma \ref{lem:energy-comparison-sarc_vs_larc} we have $\mathcal{E}_\lambda[\gamma_{\rm larc}^{\lambda,\ell,1}] \leq \mathcal{E}_\lambda[\gamma_{\rm sarc}^{\lambda,\ell,1}]$ and by Lemma \ref{lem:energy-comparison-arc_vs_loop} we have $\mathcal{E}_\lambda[\gamma_{\rm larc}^{\lambda,\ell,1}] \leq \mathcal{E}_\lambda[\gamma_{\rm loop}^{\lambda,\ell,1}]$. We infer
    \begin{itemize}
        \item $\gamma_{\rm larc}^{\lambda, \ell, 1}$ has minimal energy of all penalized pinned elasticae in $A_\ell$ (except for the line).
        \item $\gamma_{\rm sarc}^{\lambda, \ell, 1}, \gamma_{\rm loop}^{\lambda, \ell, 1} $ have a larger energy than $\gamma_{\rm larc}^{\lambda, \ell, 1}$, but their order depends on $\lambda$, cf.\ Lemma~\ref{lem:energy-comparison-sarc_vs_loop}.  More precisely we have shown that the order changes once at $\ell^{-2}\lambda_\dagger$. From Figure  \ref{fig:EnergyComp}, which shows the special case of $\ell = 1$, one can read off that $ \lambda_\dagger \simeq  0.32241$.
        \item Since increasing the mode $n$ makes the energy larger (cf.\ Lemma \ref{lem:energy-comparison-mode}) we infer that all the elasticae with higher modes are not minimal.  
    \end{itemize}
\end{remark}

\begin{figure}[h!]
    \centering
    \includegraphics[width=0.5\linewidth]{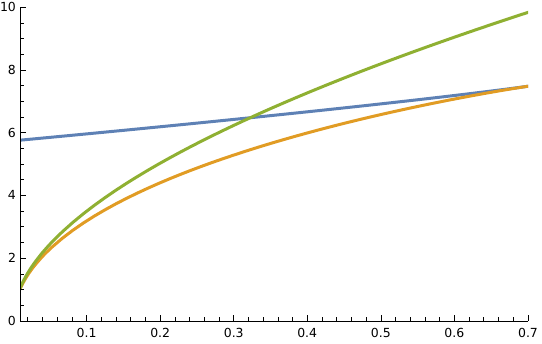}
    \caption{Plots of $\lambda \mapsto \mathcal{E}_\lambda[\gamma_{\rm sarc}^{\lambda,1,1}]$ (blue), $\lambda \mapsto \mathcal{E}_\lambda[\gamma_{\rm larc}^{\lambda,1,1}]$ (yellow) and $\lambda \mapsto \mathcal{E}_\lambda[\gamma_{\rm loop}^{\lambda,1,1}]$ (green) for $\lambda \in (0.02,0.7)$ }
    \label{fig:EnergyComp}
\end{figure}

We close this section by the proof of Theorem~\ref{thm:unique_nontrivial_PPE}. 

\begin{proof}[Proof of Theorem~\ref{thm:unique_nontrivial_PPE}]
This is a direct consequence of Theorem~\ref{thm:classification-PPE} and Lemmas~\ref{lem:energy-comparison-mode}, \ref{lem:energy-comparison-sarc_vs_larc}, \ref{lem:energy-comparison-arc_vs_loop}, and \ref{lem:energy-comparison-arc_vs_loop_2}. 
\end{proof}


\section{Elastic flow}\label{sect:elastic-flow}

In this section we apply our classification results in Theorem~\ref{thm:classification-PPE} and energy-comparison results in Lemmas~\ref{lem:energy-comparison-sarc_vs_larc} and \ref{lem:energy-comparison-arc_vs_loop_2} to the asymptotic behavior of the $\lambda$-elastic flow \eqref{eq:elastic_flow} under the Navier boundary conditions, cf.\ \eqref{eq:Navier-BC_flow}. 
It is already known that the solution to the flow subconverges to a stationary solution, which satisfies 
\begin{align}\label{eq:stationary}
\begin{cases}
  2\partial_s^2k+k^3-\lambda k=0 \quad \text{in} \ \ (0,1), \\
    \gamma(0)=(0,0), \ \gamma(1) =(\ell,0), \\ 
    \kappa(0)=\kappa(1)=0.
\end{cases}
\end{align}
We first recall the following statement (see e.g.\ \cite[Section 2]{NO_2014} or \cite[Proposition 5.2]{MPP21}): 
\begin{proposition}[Long-time existence and subconvergence]\label{prop:MPP21_Prop5.2}
Let $\gamma_0 \in A_\ell$ be a smoothly immersed curve such that $\kappa(0)=\kappa(1)=0$. 
Then, there is a unique global-in-time smooth solution $\gamma:[0,1]\times [0,\infty)\to\mathbf{R}^2$ to the initial value problem, 
\begin{align*}
\begin{cases}
    \partial_t \gamma = -2\nabla^2_s \kappa - |\kappa|^2\kappa + \lambda \kappa \quad &\text{in} \ \ [0,1]\times(0,\infty),  \\ 
    \gamma(0,t)=(0,0), \ \ \gamma(1,t)=(\ell,0) & \text{on} \ \ (0,\infty), \\
    \kappa(0,t)=\kappa(1,t)=0, & \text{on} \ \ (0,\infty), \\
    \gamma(x,0)=\gamma_0(x) & \text{on} \ \ [0,1].
\end{cases}
\end{align*}
Moreover, for any sequence $t_j\to\infty$ there exist a subsequence $t_{j'} \to \infty$ and a smoothly immersed curve $\gamma_\infty$ satisfying \eqref{eq:stationary} such that $\gamma(\cdot,t_j)$ converges smoothly to $\gamma_\infty$ up to reparametrization.
\end{proposition}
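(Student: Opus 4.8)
\emph{The plan.} This proposition is essentially classical, and for full details I would refer to \cite{NO_2014,MPP21}; the strategy is to combine standard parabolic theory for the fourth-order equation \eqref{eq:elastic_flow} with the gradient-flow structure of $\mathcal{E}_\lambda$. The three ingredients are: short-time existence via linearization and a contraction argument, global existence via uniform-in-time a priori estimates coming from energy dissipation and interpolation on curves, and subconvergence via compactness.

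\emph{Short-time existence and smoothness.} First I would rewrite \eqref{eq:elastic_flow} as a quasilinear fourth-order parabolic system for a parametrization of the curve, fixing a convenient tangential component of the velocity (a DeTurck-type choice) so that the principal part becomes $-2\partial_x^4$ plus lower-order terms without changing the geometric evolution. Under the Navier conditions \eqref{eq:Navier-BC_flow} the boundary operators $\gamma=\mathrm{const}$ and $\kappa=0$ form a complementing (Lopatinskii--Shapiro) set for this principal part, so linear parabolic theory in parabolic H\"older spaces applies; a fixed-point argument yields a unique short-time solution, and parabolic bootstrap gives smoothness for $t>0$.

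\emph{Global existence.} Since the flow is the $L^2(\mathrm{d}s)$-gradient flow of $\mathcal{E}_\lambda$, one has the dissipation identity
\begin{equation*}
\frac{\mathrm{d}}{\mathrm{d}t}\mathcal{E}_\lambda[\gamma(\cdot,t)] = -\int_\gamma |\partial_t\gamma|^2\,\mathrm{d}s \le 0 ,
\end{equation*}
so $B[\gamma(\cdot,t)]+\lambda L[\gamma(\cdot,t)]\le \mathcal{E}_\lambda[\gamma_0]$ for all $t$; in particular $\int_\gamma k^2\,\mathrm{d}s$ and $L[\gamma(\cdot,t)]$ are uniformly bounded, while $L[\gamma(\cdot,t)]\ge \ell$ because the endpoints stay at distance $\ell$. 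Next I would derive, from the evolution equations for $\int_\gamma |\nabla_s^m\kappa|^2\,\mathrm{d}s$ together with the Gagliardo--Nirenberg interpolation inequalities on curves, uniform-in-time bounds for every $m$; here the boundary conditions and their time-differentiated consequences (e.g.\ $\partial_t\gamma=0$ at the endpoints, which constrains $\nabla_s^2\kappa$ there) are used to dispose of the boundary terms produced by integration by parts. Parametrizing by constant speed one also controls $|\gamma'|$ from above and below, so the immersion does not degenerate; hence no singularity forms in finite time and the solution is global.

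\emph{Subconvergence and the main obstacle.} Integrating the dissipation identity gives $\int_0^\infty\!\int_\gamma |\partial_t\gamma|^2\,\mathrm{d}s\,\mathrm{d}t = \mathcal{E}_\lambda[\gamma_0]-\lim_{t\to\infty}\mathcal{E}_\lambda[\gamma(\cdot,t)]<\infty$, so for any $t_j\to\infty$ there is a subsequence $t_{j'}$ with $\|\partial_t\gamma(\cdot,t_{j'})\|_{L^2(\mathrm{d}s)}\to 0$. By the uniform smooth bounds and the Arzel\`a--Ascoli theorem, a further subsequence of the constant-speed reparametrizations converges in $C^\infty$ to a smoothly immersed curve $\gamma_\infty$ that inherits the Navier boundary conditions, and since $\partial_t\gamma\to 0$ along the subsequence it satisfies \eqref{eq:stationary}. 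The genuinely delicate step is the global a priori estimate: showing that the $L^2$-norms of all normal derivatives of $\kappa$ remain bounded uniformly in time with the boundary contributions under control — handling these with the Navier conditions (rather than with periodicity, as in the closed-curve case of Dziuk--Kuwert--Sch\"atzle) is where the cited references do the real work.
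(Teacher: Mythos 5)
Your outline matches what the paper does: the paper gives no proof of this proposition but simply recalls it from \cite[Section 2]{NO_2014} and \cite[Proposition 5.2]{MPP21}, whose arguments follow exactly the scheme you describe (short-time existence via linearization under the Navier/Lopatinskii--Shapiro setup, global existence from the energy dissipation plus Gagliardo--Nirenberg-type estimates on $\int|\nabla_s^m\kappa|^2\,\mathrm{d}s$ with boundary terms controlled by the Navier conditions, and subconvergence of constant-speed reparametrizations via $\int_0^\infty\|\partial_t\gamma\|_{L^2(\mathrm{d}s)}^2\,\mathrm{d}t<\infty$ and Arzel\`a--Ascoli). You also correctly isolate the reparametrization issue that the paper flags as the only deviation from the original statements, so the proposal is correct and takes essentially the same route.
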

The last subconvergence statement in Proposition~\ref{prop:MPP21_Prop5.2} is slightly different from the original statement, but an inspection of the proofs in the above references immediately implies the above formulation. 
In view of Lemma~\ref{lem:EL-PPE}, critical points of $\mathcal{E}_\lambda$ in $A_\ell$ in the sense of Definition~\ref{def:PPE} can be characterized by \eqref{eq:stationary}.
Thus, stationary solutions of \eqref{eq:stationary} correspond to critical points of $\mathcal{E}_\lambda$ in $A_\ell$. 
Therefore, Theorem~\ref{thm:classification-PPE} can be also regarded as the complete classification of solutions to \eqref{eq:stationary}. 

The classification and assumption \eqref{eq:embedded-threshold} significantly reduce the candidates of trajectories of the elastic flow. 
In view of Lemmas~\ref{lem:energy-comparison-sarc_vs_larc} and \ref{lem:energy-comparison-arc_vs_loop_2}, a non-trivial critical point of $\mathcal{E}_\lambda$ in $A_\ell$ whose energy is less than that of $\gamma_{\rm loop}^{\lambda,\ell,1}$ is either $\bar{\gamma}_{\rm sarc}^{\lambda,\ell,1}$ or $\bar{\gamma}_{\rm larc}^{\lambda,\ell,1}$, where $\bar{\gamma}_{\rm sarc}^{\lambda,\ell,1}:[0,1]\to\mathbf{R}^2$ and $\bar{\gamma}_{\rm larc}^{\lambda,\ell,1}:[0,1]\to\mathbf{R}^2$ denotes the constant-speed reparametrization of $\gamma_{\rm sarc}^{\lambda,\ell,1}$ and $\gamma_{\rm larc}^{\lambda,\ell,1}$, respectively. 
Thus, recalling the fundamental property $\frac{d}{dt}\mathcal{E}_\lambda[\gamma(\cdot,t)]\leq0$, we see that any limit curve $\gamma_\infty$ must (after reparametrization) belong to
\begin{align}\label{eq:omega_limit_set}
    \omega&:=\Set{ \gamma \in A_\ell | \gamma \text{ is either $\gamma_{\rm seg}$, $\bar{\gamma}_{\rm larc}^{\lambda,\ell,1}$, or $\bar{\gamma}_{\rm sarc}^{\lambda,\ell,1}$}}, 
\end{align}
where $\gamma_{\rm seg}:[0,1]\to\mathbf{R}^2$ denotes the unique line segment in $A_\ell$.  We interpret $\omega=\{ \gamma_{\rm seg}\}$ if $\lambda\ell^2 > \hat{\lambda}$ since in this case $\bar{\gamma}_{\rm larc}^{\lambda,\ell,1}$ and $\bar{\gamma}_{\rm sarc}^{\lambda,\ell,1}$ are absent.

To study embeddedness of the elastic flow it will be important to investigate whether a curve in $\omega$ is embedded. 

\begin{lemma}\label{lem:embeddedness_omega} 
Every curve $\gamma\in\omega$ is embedded. 
\end{lemma}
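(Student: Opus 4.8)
The set $\omega$ consists of three curves: the line segment $\gamma_{\rm seg}$, the longer arc $\bar{\gamma}_{\rm larc}^{\lambda,\ell,1}$, and the shorter arc $\bar{\gamma}_{\rm sarc}^{\lambda,\ell,1}$. The line segment is trivially embedded. For the two arcs, the plan is to exploit the explicit parametrization from Definition~\ref{def:sarc_larc_loop} together with the symmetry established in Lemma~\ref{lem:symmetry-PPE}. The key observation is that both $\gamma_{\rm larc}^{\lambda,\ell,1}$ and $\gamma_{\rm sarc}^{\lambda,\ell,1}$ use the modulus $q=q_i(\lambda\ell^2)$ with $q\in(\tfrac{1}{\sqrt{2}},q_*)$, in contrast to the loop, which uses $q_3(\lambda\ell^2)\in(q_*,1)$. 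The sign condition $2\mathrm{E}(q)-\mathrm{K}(q)>0$ for $q<q_*$ (Lemma~\ref{lem:elliptic_2E-K}) is exactly what forces $X_{\sigma,\alpha,q}$ to be net-increasing over one period, and I expect this is what prevents self-intersection.

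First I would write down the tangential angle. By \eqref{eq:theta_pm} and \eqref{eq:choice-sigma}, for both arcs (with $\sigma=-$) the tangential angle over $[0,L]$, $L=L[\gamma]=\tfrac{2\mathrm{K}(q)}{\alpha}$, is
\[
\theta(s) = -2\arcsin\big(q\,\sn(\alpha s - \mathrm{K}(q),q)\big),
\]
so that $\theta(s)\in(-2\arcsin q, 2\arcsin q)$. Since $q<q_*<1$ we have $2\arcsin q < \pi$ when $q$ is not too large; more carefully, one checks $q_* \simeq 0.909$ gives $2\arcsin q_* \simeq 2.29 < \pi$, hence $|\theta(s)| < \pi$ for all $s$, in fact $|\theta(s)|$ is bounded away from $\tfrac{\pi}{2}$ only near the endpoints. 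The crucial point is that $\cos\theta(s) = 1 - 2q^2\sn^2(\alpha s-\mathrm{K}(q),q)$ changes sign (it is negative where $\sn^2 > \tfrac{1}{2q^2}$), so $X$ is not monotone; one cannot conclude embeddedness from monotonicity of $X$ alone. Instead I would use the reflection symmetry: by Lemma~\ref{lem:symmetry-PPE}, $X(s)+X(L-s)=\ell$ and $Y(s)=Y(L-s)$, so it suffices to show $\gamma$ restricted to $[0,\tfrac{L}{2}]$ is injective and that $\gamma((0,\tfrac{L}{2}))$ does not meet $\gamma((\tfrac{L}{2},L))$. For the latter, a self-intersection $\gamma(s)=\gamma(L-s')$ with $s,s'\in(0,\tfrac{L}{2})$ would, combined with the symmetry, force in particular a point where $X(s)=\tfrac{\ell}{2}$ with $s\ne\tfrac{L}{2}$; I would show $X(s)<\tfrac{\ell}{2}$ strictly on $(0,\tfrac{L}{2})$ by establishing that $X$ is strictly increasing on $[0,\tfrac L2]$. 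This reduces to showing $\cos\theta(s)>0$ on $[0,\tfrac L2)$, i.e. $q^2\sn^2(\alpha s-\mathrm{K}(q),q)<\tfrac12$ there; since $\sn$ ranges monotonically from $-1$ at $s=0$ (argument $-\mathrm{K}(q)$) up through $0$ at $s=\tfrac L2$ (argument $0$), we need $q^2 < \tfrac12$, i.e. $q<\tfrac{1}{\sqrt2}$ — which fails. So $X$ is genuinely non-monotone even on the half-interval, and the argument must be subtler.

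The right approach, then, is the direct one via the curve-tracing: parametrize by $\theta$. Since $\theta'(s)=k(s)=-2\alpha q\cn(\alpha s-\mathrm{K}(q),q)$ vanishes only where $\cn=0$, i.e. at $s = \tfrac L2$ (and the endpoints correspond to $|k|$ maximal, not zero — wait, $k(0)=k(L)=0$ by \eqref{eq:pinned_BC}, so actually $\cn(-\mathrm{K}(q),q)=0$, meaning $k$ vanishes at $s=0,\tfrac L2,L$). Thus $\theta$ is monotone on each of $[0,\tfrac L2]$ and $[\tfrac L2,L]$, increasing on one and decreasing on the other (they are mirror images). On $[0,\tfrac L2]$, $\theta$ sweeps monotonically from $0$ down to its minimum $-2\arcsin q$ at some interior point and back — no wait, $\theta(0)=0$, $\theta(L/2)=0$ (since $\sn(0)=0$), and $\theta$ has one sign throughout $(0,L/2)$. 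So on $(0,\tfrac L2)$ the curve turns, with total turning $\int_0^{L/2}k\,ds$. Compute: this equals $[$ by \eqref{eq:Y_+}-type computation $]$ a multiple of $\arcsin q$, bounded by $2\arcsin q<\pi$ in absolute value (using $q<q_*$). A planar arc whose tangent direction stays within an open half-plane of directions — total turning strictly less than $\pi$ in absolute value AND the direction set contained in a half-plane — is a graph over the chord direction, hence embedded. I would verify the direction set $\{\theta(s):s\in[0,L/2]\}\subset(-2\arcsin q, 0] \subset (-\pi,\pi)$, and more precisely that it lies in an open semicircle $(-\pi+\epsilon,\epsilon)$, which holds since $2\arcsin q<\pi$. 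This makes each half-arc embedded. Then the two halves meet only at the common endpoint $\gamma(\tfrac L2)$ on the symmetry axis $X=\tfrac\ell2$: a crossing $\gamma(s)=\gamma(L-s')$ with $(s,s')\ne(\tfrac L2,\tfrac L2)$ would place a point with $X$-coordinate equal on both halves; using the symmetry $X(L-s')=\ell-X(s')$ this forces $X(s)+X(s')=\ell$, and combined with $Y(s)=Y(s')$ and the embeddedness of the half-arc one derives $s=L-s'$, i.e. $s'=L-s$, pushing both back onto the axis — contradiction unless $s=s'=\tfrac L2$.

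The main obstacle I anticipate is the last step: cleanly ruling out the "cross-half" intersections. The cleanest route is probably to show directly that $Y(s)>0$ for all $s\in(0,L)$ for the arcs (they lie strictly in the open upper half-plane except at the pinned endpoints), which by \eqref{eq:Y_+}, $Y(s)=\tfrac{2q}{\alpha}\cn(\alpha s-\mathrm{K}(q),q)$ up to the chosen sign — and $\cn(\alpha s-\mathrm{K}(q),q)$ has the sign of $\cn$ on $(-\mathrm{K}(q),\mathrm{K}(q))$, which is... zero at the endpoints and of one sign in between (actually $\cn>0$ on $(-\mathrm{K},\mathrm{K})$). With $\sigma=-$ for the arcs the formula \eqref{def:gamma_sarc-PPE} has $+2q\cn$ in the second slot, so $Y(s)=\tfrac{2q}{\alpha}\cn(\alpha s-\mathrm{K}(q),q)>0$ on $(0,L)$, vanishing exactly at $s=0,L$. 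So an arc meets itself only if two distinct interior parameters give the same point; since $Y$ is strictly positive with a single maximum at $s=\tfrac L2$, any level $Y=c$ is hit exactly twice, at $s$ and $L-s$, and then $X(s)\ne X(L-s)=\ell-X(s)$ unless $X(s)=\tfrac\ell2$, i.e. $s=\tfrac L2$. Thus the arc is embedded. I would therefore organize the final proof around strict positivity and the single-maximum property of $Y$, which are immediate from the $\cn$-formula, making the whole argument quite short — the "obstacle" dissolves once one uses $Y$ rather than $X$ as the monotonicity handle.
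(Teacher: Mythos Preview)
Your reduction via the $Y$-coordinate is sound up to the penultimate step: using that $Y(s)=\tfrac{2q}{\alpha}\cn(\alpha s-\mathrm{K}(q),q)$ is strictly positive on $(0,L)$ with a single maximum at $s=\tfrac L2$, you correctly conclude that a self-intersection $\gamma(s)=\gamma(s')$ with $s\ne s'$ forces $s'=L-s$ and hence $X(s)=\ell-X(s)$, i.e.\ $X(s)=\tfrac\ell2$. But the final clause ``i.e.\ $s=\tfrac L2$'' is not justified, and this is precisely the obstacle you yourself identified earlier in the proposal and then declared dissolved. It has not dissolved: you have only reduced the question to whether $X$ attains the value $\tfrac\ell2$ somewhere on $(0,\tfrac L2)$, which is exactly the $X$-question you showed cannot be settled by monotonicity (since $q>\tfrac1{\sqrt2}$ forces $X'$ to change sign). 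Note that the very same $Y$-reduction applied to the loop $\gamma_{\rm loop}^{\lambda,\ell,1}$ would reach the identical conclusion ``$X(s)=\tfrac\ell2$'', and there such an $s\in(0,\tfrac L2)$ \emph{does} exist (cf.\ Lemma~\ref{lem:symmetry-PPE}); so something specific to $q<q_*$ must still be invoked.

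The paper closes this gap by a convexity argument: from $X'(s)=1-2q^2\sn^2(\alpha s-\mathrm{K}(q),q)$ one sees that $X'$ is strictly increasing on $(0,\tfrac L2)$ (the argument of $\sn$ runs from $-\mathrm{K}(q)$ to $0$, so $\sn^2$ decreases from $1$ to $0$). Hence $X$ is convex on $[0,\tfrac L2]$, and with $X(0)=0$, $X(\tfrac L2)=\tfrac\ell2$ this gives $X(s)<\tfrac\ell2$ for all $s\in(0,\tfrac L2)$. Combined with the monotonicity of $Y$ on $(0,\tfrac L2)$ and the reflection symmetry, embeddedness follows. Your outline is thus one clean observation short of being complete; the missing piece is the convexity of $X$ on the half-interval.
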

\begin{proof}
The case $\lambda\ell^2>\hat{\lambda}$ is trivial, so hereafter we consider $0<\lambda\ell^2 \leq \hat{\lambda}$. 
Then, it suffices to check embeddedness of $\gamma_{\rm sarc}^{\lambda,\ell,1}$ and $\gamma_{\rm larc}^{\lambda,\ell,1}$.
Since the proof is completely parallel, we only consider $\gamma_{\rm sarc}^{\lambda,\ell,1}$. 
Set $(X(s), Y(s)):=\gamma_{\rm sarc}^{\lambda,\ell,1}(s)$. 
We deduce from \eqref{def:gamma_sarc-PPE} that 
$
X(s)=\frac{1}{\alpha} ( 2\mathrm{E}(\am(\alpha s-\mathrm{K}(q),q),q) + 2\mathrm{E}(q) - \alpha s )
$
with $q:=q_1({\lambda\ell^2})$ and $\alpha:=\frac{2}{\ell}(2\mathrm{E}(q)-\mathrm{K}(q))$. 
As in \eqref{eq:X_+_compute} we obtain
\[
X'(s)=1-2q^2\sn(\alpha s-\mathrm{K}(q),q)^2.
\]
Since $L[\gamma_{\rm sarc}^{\lambda,\ell,1}]=2\mathrm{K}(q)/\alpha$, we see that $X'(s)$ is increasing for $s\in (0,{L}/{2})$.
Thus $X$ is convex in $[0,{L}/{2}]$. 
This together with the fact that $X(0)=0$ and $X(\frac{L}{2})=\ell/2$ implies that $X(s)<\ell/2$ holds for all $s\in(0,L/2)$. 
By reflection symmetry (cf.\ \eqref{eq:symmetry-PPE}), it suffices to check embeddedness of $\gamma_{\rm sarc}^{\lambda,\ell,1}(s)$ for $s\in(0,L/2)$, which immediately follows from monotonicity of $Y(s)=\frac{2q}{\alpha}\cn(\alpha s-\mathrm{K}(q))$ on $(0,L/2)$.
\end{proof}

We are now ready to prove Theorem~\ref{thm:embeddedness}.

\if0

\begin{lemma}\label{lem:omega_set_discrete}
Every $\gamma\in\omega$ is isolated in $C^0([0,1];\mathbf{R}^2)$. \textcolor{red}{(Isn't that clear since $\omega$ is a finite set?. I suggest to cut entirely, see also the new proof of Theorem 1.4 below)} 
\end{lemma}
\begin{proof}
Since the case $\lambda\ell^2>\hat{\lambda}$ is trivial, hereafter we consider $0<\lambda\ell^2 \leq \hat{\lambda}$. 
We show that there is $d_{\lambda,\ell}>0$ such that 
\begin{align}\label{eq:isolate_goal}
\big\| \bar{\gamma}_{\rm sarc}^{\lambda,\ell,1} - \bar{\gamma}_{\rm larc}^{\lambda,\ell,1} \big\|_{C^0} \geq d_{\lambda,\ell}.
\end{align}
By the explicit formulae in \eqref{def:gamma_sarc-PPE} and \eqref{def:gamma_larc-PPE}, we have 
\begin{align}\label{eq:height_sarc_larc}
\big(\bar{\gamma}_{\rm sarc}^{\lambda,\ell,1}(\tfrac{1}{2}), e_2 \big) = \frac{\ell q_{1,1}}{2 (2\mathrm{E}(q_{1,1})- \mathrm{K}(q_{1,1}) ) }, 
\quad 
\big(\bar{\gamma}_{\rm larc}^{\lambda,\ell,1}(\tfrac{1}{2}), e_2 \big) = \frac{\ell q_{1,2}}{2 (2\mathrm{E}(q_{1,2})- \mathrm{K}(q_{1,2}) ) }, 
\end{align}
where $q_{1,1}:=q_1(\lambda\ell^2)$ and $q_{1,2}:=q_2(\lambda\ell^2)$. 
By the fact that $q_{1,1}<q_{1,2}$ and by monotonicity of $[0,q_*]\ni q\mapsto 2\mathrm{E}(q)- \mathrm{K}(q)$, it follows that 
\begin{align*}
    \big\| \bar{\gamma}_{\rm sarc}^{\lambda,\ell,1} - \bar{\gamma}_{\rm larc}^{\lambda,\ell,1} \big\|_{C^0} 
    &\geq \big| (\bar{\gamma}_{\rm sarc}^{\lambda,\ell,1}(\tfrac{1}{2}), e_2) - (\bar{\gamma}_{\rm larc}^{\lambda,\ell,1}(\tfrac{1}{2}), e_2)\big| 
    \geq \ell \frac{q_{1,2}-q_{1,1}}{2 (2\mathrm{E}(\frac{1}{\sqrt{2}})- \mathrm{K}(\frac{1}{\sqrt{2}}) )}.
\end{align*}
In addition, noting that $\hat{\lambda}=g(\hat{q})$ and $g(q_{1,1})=\lambda\ell^2$, by the Mean Value Theorem we obtain 
\[
\frac{\hat{\lambda} -\lambda\ell^2}{\hat{q}-q_{1,1}} = \frac{g(\hat{q})-g(q_{1,1})}{\hat{q}-q_{1,1}} \leq c, 
\]
where $c:=\max_{x\in[\frac{1}{\sqrt{2}},q_*]}|g'(x)|$ (independent of $\lambda,\ell$).
This together with the fact that $q_{1,2} > \hat{q}$ yields $q_{1,2}-q_{1,1} \geq c^{-1}(\hat{\lambda} -\lambda\ell^2)$ and consequently we obtain 
\[
\big\| \bar{\gamma}_{\rm sarc}^{\lambda,\ell,1} - \bar{\gamma}_{\rm larc}^{\lambda,\ell,1} \big\|_{C^0} \geq \ell \frac{q_{1,2}-q_{1,1}}{2 (2\mathrm{E}(\frac{1}{\sqrt{2}})- \mathrm{K}(\frac{1}{\sqrt{2}}) )} \geq C \ell (\hat{\lambda} -\lambda\ell^2), 
\]
where we put $C:=c^{-1}(4\mathrm{E}(\frac{1}{\sqrt{2}})-2\mathrm{K}(\frac{1}{\sqrt{2}}))^{-1}$. 
Thus, by setting $d_{\lambda,\ell}:=C \ell (\hat{\lambda} -\lambda\ell^2)$ we obtain \eqref{eq:isolate_goal}.
In addition, by \eqref{eq:height_sarc_larc} we see that 
\[
 \|\bar{\gamma}_{\rm sarc}^{\lambda,\ell,1} - \gamma_{\rm seg }\|_{C^0} \geq  \frac{\ell q_{1,1}}{2 (2\mathrm{E}(q_{1,1})- \mathrm{K}(q_{1,1}) ) } \geq \ell \frac{\frac{1}{\sqrt{2}} }{2 (2\mathrm{E}(\frac{1}{\sqrt{2}})- \mathrm{K}(\frac{1}{\sqrt{2}}) ) } =: d'_{\ell}.
\]
Similarly $\|\bar{\gamma}_{\rm larc}^{\lambda,\ell,1} - \gamma_{\rm seg }\|_{C^0} \geq d'_{\ell}$ holds.
Consequently we have $\min_{\gamma, \Gamma \in \omega, \gamma\neq\Gamma}\|\gamma-\Gamma\|_{C^0}\geq \min\{d, d'\}$.
The proof is now complete.
\end{proof}

\textcolor{red}{(I think this proof is not complete since it does not take into account that for each $j$ $\gamma(\cdot,t_j)$ has to be reparametrized in in order to show $C^0$-convergence. I present an alternative proof below)}
\begin{proof}[Proof of Theorem~\ref{thm:embeddedness}]

We divide the proof into some steps.

\textbf{Step~1} (\textsl{Subconvergence})\textbf{.}
By Proposition~\ref{prop:MPP21_Prop5.2}, there exists a sequence $t_j\to\infty$ and a curve $\gamma_\infty \in \omega$ such that (up to reparametrization) $\gamma(\cdot,t_j)$ smoothly converges to $\gamma_\infty$. 
Recall from Lemma~\ref{lem:omega_set_discrete} that there is $d>0$ such that 
$\|\gamma_\infty - \gamma\|_{C^0} \geq d$ holds for any $\gamma \in \omega \setminus\{\gamma_\infty\}$.
In addition, for $\delta:=d/4$ the ball $B_\delta(\gamma_\infty):=\{ \gamma\in C^0([0,1];\mathbf{R}^2) \,|\, \|\gamma-\gamma_\infty\|_{C^0} \leq \delta \}$ does not involve any curve that belongs to $\omega\setminus \{\gamma_\infty\}$. 
\textcolor{blue}{\sout{In particular, using this $\delta>0$,}} \textcolor{blue}{Notice also that} we can choose $\{t_j\}$  \textcolor{blue}{and reparamterizations $\phi_j: [0,1] \rightarrow [0,1]$} such that 
\textcolor{red}{
\[
\| \gamma(\cdot,t_j) - \gamma_\infty\|_{C^0}< \frac{\delta}{2j}.
\]
}
\textcolor{blue}{
\[
\| \gamma(\phi_j(\cdot),t_j) - \gamma_\infty\|_{C^0}< \frac{\delta}{2j}.
\]
}

\textbf{Step~2} (\textsl{Full convergence})\textbf{.}
Now we claim that 
\begin{align}
    \gamma(\cdot,t) \to \gamma_\infty \quad \text{as} \ \ t\to\infty.
\end{align}
We prove this by contradiction: suppose that there are $\epsilon>0$, $m\in\mathbf{N}\cup\{0\}$, and $\tau_j\to\infty$ such that $\sup_{j\in\mathbf{N}}\|\gamma(\cdot,\tau_j)-\gamma_\infty\|_{C^m}\geq \epsilon$. 
By Proposition~\ref{prop:MPP21_Prop5.2} and the assumption \eqref{eq:embedded-threshold}, there are a subsequence (without relabeling) and $\Gamma_\infty \in \omega$ such that $\gamma(\cdot,\tau_j) \to \Gamma_\infty$ in $C^\infty$. 
Note that $\Gamma_\infty \neq \gamma_\infty$. 
We may suppose that $\|\gamma(\cdot,\tau_j)-\Gamma_\infty\|_{C^0}\leq \delta/2j$ (by changing $\{\tau_j\}$ if necessary).

Here we construct $\{\tilde{t}_j\}_{j\in\mathbf{N}}$ by
\begin{align}\label{eq:subseq_radius_half}
    \tilde{t}_j:=\inf\Set{ t\geq t_j | \|\gamma(\cdot,t)-\gamma_\infty\|_{C^0} = \tfrac{\delta}{2} }.
\end{align}
Note that $\|\gamma(\cdot,t_j)-\gamma_\infty\|_{C^0} < \frac{\delta}{2}$ and 
that 
\[
\|\gamma(\tau_j)-\gamma_\infty\|_{C^0} \geq \|\gamma_\infty-\Gamma_\infty\|_{C^0} - \|\gamma(\tau_j)-\Gamma_\infty\|_{C^0} \geq \delta.
\]
This together with the continuity $(0,\infty) \ni t\mapsto \gamma(\cdot,t)$ in $C^0([0,1])$ implies that the set $\{ t\geq t_j \,|\, \|\gamma(\cdot,t)-\gamma_\infty\| = \tfrac{\delta}{2} \}$ is not empty for each $j\in\mathbf{N}$. 
By definition $\tilde{t}_j \geq t_j$ and $\tilde{t}_j \leq \tilde{t}_{j+1}$ immediately follow, in particular, $\tilde{t}_j \to \infty$.
Then, by Proposition~\ref{prop:MPP21_Prop5.2} up to a (not relabeled) subsequence $\gamma(\cdot,\tilde{t}_j)$ converges smoothly to a critical point $c_\infty \in \omega$.
Moreover, in view of \eqref{eq:subseq_radius_half} we see that $\|c_\infty - \gamma_\infty\|_{C^0} = \delta/2$. 
A contradiction. 

\textbf{Step~3} (\textsl{Conclusion})\textbf{.}
By the previous step 
for any solution to \eqref{eq:elastic_flow_under_NavierBC} there is a limit curve $\gamma_\infty \in \omega$ such that $\gamma(\cdot,t) \to \gamma_\infty$ in $C^m([0,1];\mathbf{R}^2)$ for any $m\in\mathbf{N}$ (up to reparametrization).
This with the fact that $\gamma_\infty \in \omega$ and Lemma~\ref{lem:embeddedness_omega} yields the desired conclusion.
\end{proof}

\fi

\begin{proof}[Proof of Theorem~\ref{thm:embeddedness}]
    Let $\gamma(\cdot, t)$ be the smooth evolution of the $\lambda$-elastic flow in $A_\ell$ with initial datum $\gamma_0$. We prove the theorem by contradiction. If the asserted time $t_0$ does not exist, then one can find a sequence $t_j \rightarrow \infty$ such that $\gamma(\cdot, t_j)$ is not embedded.
    By Proposition \ref{prop:MPP21_Prop5.2} we can find a subsequence $t_{j'} \rightarrow \infty$ and reparametrizations $\phi_{j'} : [0,1] \rightarrow [0,1]$ such that $\gamma(\phi_{j'}(\cdot),t_{j'})$ converges smoothly to some curve $\gamma_\infty$ satisfying \eqref{eq:stationary}. By \eqref{eq:omega_limit_set}, we can choose the reparametrizations $\phi_{j'}$ in such a way that $\gamma_\infty$ must belong to $\omega$. In particular, $\gamma_\infty$ is embedded by Lemma \ref{lem:embeddedness_omega}. Since the set of embedded curves is open in the $C^1$-topology, (which can e.g. be seen like in \cite[Lemma 4.3]{MR23}) and $\gamma(\phi_{j'}(\cdot),t_{j'}) \rightarrow \gamma_\infty$ in the $C^1$-topology, we obtain that $\gamma(\phi_{j'}(\cdot),t_{j'})$ must be embedded for some large $j'$. Since reparametrizations do not affect the embeddedness of a curve, we also have that $\gamma(\cdot, t_{j'})$ must be embedded.  A contradiction. 
\end{proof}

\begin{remark}[Extinction of a self-intersection]\label{rem:extinction_loop}
Finally we remark that the energy threshold \eqref{eq:embedded-threshold} is indeed undercut by curves that have a self-intersection. Theorem~\ref{thm:embeddedness} yields then that for these curves, all the self-intersections become extinct in finite time. Here we give an explicit construction of nonembedded curves satisfying \eqref{eq:embedded-threshold}. 
By the instability of $\gamma_{\rm loop}^{\lambda,\ell,1}$ (cf.\ Lemma~\ref{lem:2nd-derivative}), we can find some $q\in(q_*,1)$ such that $\mathcal{E}_\lambda[\gamma_w(\cdot,q)]<C_{\lambda,\ell}$.
In particular, since for $q\in(q_*,1)$ the curve $\gamma_w(\cdot,q)$ has a self-intersection, the $\lambda$-elastic flow with initial datum $\gamma_0=\gamma_w(\cdot,q)$ has a self-intersection at $t=0$ (see also Remark~\ref{rem:smooth_and_loop}).
However, by Theorem~\ref{thm:embeddedness} there is a time $t_0>0$ such that such the $\lambda$-elastic flow possesses no self-intersection for all $t\geq t_0$.
\end{remark}

\appendix

\section{Elliptic integrals and functions}\label{sect:elliptic_functions}
In this article we have used the \emph{elliptic integrals}
 \begin{equation*}
     \mathrm{F}(x,q) := \int_0^x \frac{1}{\sqrt{1-q^2\sin^2(\theta) }} \; \mathrm{d}\theta \qquad \textrm{and} \qquad \mathrm{E}(x,q) := \int_0^x \sqrt{1-q^2\sin^2(\theta)} \; \mathrm{d}\theta 
 \end{equation*}
 for $x \in \mathbf{R}$ and $q \in (0,1)$. 
 Further we define $\mathrm{K}(q) := \mathrm{F}(\frac{\pi}{2},q)$ and $\mathrm{E}(q) := \mathrm{E}(\frac{\pi}{2},q)$.
It is known that $[0,1) \ni q\mapsto \mathrm{K}(q)$ and $[0,1]\ni q \mapsto \mathrm{E}(q)$ are strictly increasing and strictly decreasing, respectively.
More precisely, one has
\begin{align} \label{eq:diff-elliptic-int}
\mathrm{K}'(q)=\frac{\mathrm{E}(q)}{q(1-q^2)} - \frac{\mathrm{K}(q)}{q}>0, \quad \mathrm{E}'(q)= \frac{\mathrm{E}(q)}{q} - \frac{\mathrm{K}(q)}{q}<0.
\end{align}
This monotonicity leads to the following

\begin{lemma}\label{lem:elliptic_2E-K}
The function $Q:[0,1)\ni q\mapsto 2\mathrm{E}(q)-\mathrm{K}(q)$ is strictly decreasing, and satisfies $Q(0)=1$, $\lim_{q\to 1}Q(q)=-\infty$.
In particular, there exists a unique $q_*\in(0,1)$ such that $2\mathrm{E}(q_*)-\mathrm{K}(q_*)=0$.
\end{lemma}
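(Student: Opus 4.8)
The plan is to verify the three claimed properties of $Q(q):=2\mathrm{E}(q)-\mathrm{K}(q)$ in order: strict monotonicity, the endpoint behaviour at $0$ and the limit at $1$, and then existence and uniqueness of $q_*$ via the intermediate value theorem.

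For strict monotonicity I would differentiate using \eqref{eq:diff-elliptic-int}. Writing $\mathrm{E}'(q)=\frac{\mathrm{E}(q)-\mathrm{K}(q)}{q}$ and $\mathrm{K}'(q)=\frac{\mathrm{E}(q)-(1-q^2)\mathrm{K}(q)}{q(1-q^2)}$ and reducing to a common denominator, one obtains
\begin{align*}
Q'(q)=2\mathrm{E}'(q)-\mathrm{K}'(q)=\frac{(1-2q^2)\mathrm{E}(q)-(1-q^2)\mathrm{K}(q)}{q(1-q^2)},\qquad q\in(0,1).
\end{align*}
Since $0<\mathrm{E}(q)<\mathrm{K}(q)$ for $q\in(0,1)$ and $q(1-q^2)>0$, it suffices to see that the numerator is negative, which I would do by splitting into two cases. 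If $q^2\le\tfrac12$, then $(1-2q^2)\mathrm{E}(q)\le(1-2q^2)\mathrm{K}(q)$, so the numerator is at most $-q^2\mathrm{K}(q)<0$. If $q^2>\tfrac12$, then the first summand of the numerator is non-positive while the second is strictly positive, so the numerator is again negative. Hence $Q'<0$ on $(0,1)$, i.e.\ $Q$ is strictly decreasing on $[0,1)$.

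The boundary behaviour is then straightforward. The value $Q(0)$ follows directly from $\mathrm{E}(0)=\mathrm{K}(0)=\tfrac{\pi}{2}$. For the limit at $1$, $\mathrm{E}(q)\to\mathrm{E}(1)=\int_0^{\pi/2}\cos\theta\,\mathrm{d}\theta=1$ by continuity of $\mathrm{E}$ on $[0,1]$ (or dominated convergence), whereas $\mathrm{K}(q)\to+\infty$: the integrand $\frac{1}{\sqrt{1-q^2\sin^2\theta}}$ increases monotonically to $\frac{1}{|\cos\theta|}$ as $q\uparrow 1$, and $\int_0^{\pi/2}\frac{\mathrm{d}\theta}{\cos\theta}=+\infty$, so the monotone convergence theorem applies. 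Therefore $Q(q)=2\mathrm{E}(q)-\mathrm{K}(q)\to-\infty$. Finally, since $Q$ is continuous on $[0,1)$, strictly decreasing, with $Q(0)>0$ and $Q(q)\to-\infty$ as $q\to1$, the intermediate value theorem yields a unique $q_*\in(0,1)$ with $Q(q_*)=0$. The only steps requiring any care are the sign of $Q'$ and the divergence of $\mathrm{K}$ as $q\to1$; the rest is routine.
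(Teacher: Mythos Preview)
Your argument is correct in substance. The paper does not give a detailed proof of this lemma; it merely records the derivative formulae \eqref{eq:diff-elliptic-int}, notes that $\mathrm{E}$ is strictly decreasing and $\mathrm{K}$ strictly increasing, and says ``This monotonicity leads to the following''. In other words, the paper's implicit argument for strict monotonicity of $Q=2\mathrm{E}-\mathrm{K}$ is simply that a strictly decreasing function minus a strictly increasing one is strictly decreasing---your explicit computation of $Q'$ and the case split on $q^2\lessgtr\tfrac12$ are correct but unnecessary. Your treatment of the limit at $1$ and the intermediate value argument for $q_*$ is fine and matches what the paper leaves implicit.

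One small slip: you write that $Q(0)$ ``follows directly from $\mathrm{E}(0)=\mathrm{K}(0)=\tfrac{\pi}{2}$'', but those values give $Q(0)=2\cdot\tfrac{\pi}{2}-\tfrac{\pi}{2}=\tfrac{\pi}{2}$, not $1$. The value $Q(0)=1$ in the statement is a typo in the paper; what is actually used elsewhere is only $Q(0)>0$, which your computation confirms.
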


The constant $q_*$ stands for the modulus of the so-called figure-eight elastica; more precisely, the signed curvature of the figure-eight elastica is given by $k(s)=2q_*\cn(s,q_*)$, up to similarity and reparametrization (cf.\ \cite[Definition 5.1]{MR23}). 

We also mention a useful formula to investigate the bending energy of the so-called wavelike elasticae: 
\begin{align} \label{eq:Ydcds_Lem2.6}
\frac{d}{dq}\Big(q^2\mathrm{K}(q)-\mathrm{K}(q)+\mathrm{E}(q) \Big) = q\mathrm{K}(q).
\end{align}
This follows from a straightforward calculation (cf.\ \cite[Lemma 2.6]{Ydcds}).

Next we recall the Jacobian elliptic functions $\cn,\sn$. 

\begin{definition}[Elliptic functions] \label{def:Jacobi-function}
We define the \emph{Jacobian amplitude function} $\am(x,q)$ by the inverse function of $F(x,q)$, so that
\[
x= \int_0^{\am(x,q)} \frac{1}{ \sqrt{1-q^2\sin^2\theta} }\; \mathrm{d}\theta \quad \text{for}\ x\in\mathbf{R}.
\]
For $q\in[0,1)$, the \emph{Jacobian elliptic functions} are given by
\begin{align*}
\cn(x, q):= \cos \am(x,q), \qquad \sn(x, q):= \sin \am(x,q), \qquad x\in \mathbf{R}.
\end{align*}
\end{definition}

The Jacobian elliptic functions have the following fundamental properties. 
\begin{proposition} \label{prop:cn}
Let $\cn(\cdot,q)$ and $\sn(\cdot,q)$ be the elliptic functions with modulus $q \in [0,1)$. 
Then, $\cn(\cdot,q)$ is an even $2\mathrm{K}(q)$-antiperiodic function on $\mathbf{R}$ and, in $[0,2\mathrm{K}(q)]$, strictly decreasing from $1$ to $-1$. 
Further, $\sn(\cdot,q)$ is an odd $2\mathrm{K}(q)$-antiperiodic function and in $[-\mathrm{K}(q),\mathrm{K}(q)]$ strictly increasing from $-1$ to $1$.
\end{proposition}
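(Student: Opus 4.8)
The plan is to derive everything from the sole definition of $\am(\cdot,q)$ as the inverse of $\mathrm{F}(\cdot,q)$ together with two elementary symmetries of $\mathrm{F}$. First I would observe that for $q\in[0,1)$ the integrand $\theta\mapsto(1-q^2\sin^2\theta)^{-1/2}$ is smooth, positive, and bounded between $1$ and $(1-q^2)^{-1/2}$; hence $\mathrm{F}(\cdot,q)\colon\mathbf{R}\to\mathbf{R}$ is a smooth strictly increasing bijection, and therefore so is $\am(\cdot,q)$. Since the integrand is even, $\mathrm{F}(\cdot,q)$ is odd, and inverting gives that $\am(\cdot,q)$ is odd. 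Since the integrand is $\pi$-periodic and $\mathrm{F}(\pi,q)=2\mathrm{K}(q)$, one has $\mathrm{F}(\theta+\pi,q)=\mathrm{F}(\theta,q)+2\mathrm{K}(q)$, which upon inverting yields the quasi-periodicity $\am(x+2\mathrm{K}(q),q)=\am(x,q)+\pi$. Finally $\am(0,q)=0$ and $\am(\mathrm{K}(q),q)=\tfrac{\pi}{2}$ come directly from $\mathrm{F}(0,q)=0$ and $\mathrm{F}(\tfrac{\pi}{2},q)=\mathrm{K}(q)$; combined with the quasi-periodicity and oddness this also gives $\am(2\mathrm{K}(q),q)=\pi$ and $\am(-\mathrm{K}(q),q)=-\tfrac{\pi}{2}$.

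Next I would translate these properties of $\am$ into the claimed properties of $\cn=\cos\am$ and $\sn=\sin\am$. Oddness of $\am(\cdot,q)$ together with the evenness of $\cos$ and oddness of $\sin$ gives that $\cn(\cdot,q)$ is even and $\sn(\cdot,q)$ is odd. The quasi-periodicity of $\am$ together with $\cos(\cdot+\pi)=-\cos$ and $\sin(\cdot+\pi)=-\sin$ gives $\cn(x+2\mathrm{K}(q),q)=-\cn(x,q)$ and $\sn(x+2\mathrm{K}(q),q)=-\sn(x,q)$, i.e.\ both are $2\mathrm{K}(q)$-antiperiodic.

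For the monotonicity I would use that $\am(\cdot,q)$ is strictly increasing, hence maps $[0,2\mathrm{K}(q)]$ bijectively onto $[\am(0,q),\am(2\mathrm{K}(q),q)]=[0,\pi]$; since $\cos$ is strictly decreasing on $[0,\pi]$ from $1$ to $-1$, the composition $\cn(\cdot,q)$ is strictly decreasing on $[0,2\mathrm{K}(q)]$ from $1$ to $-1$. Likewise $\am(\cdot,q)$ maps $[-\mathrm{K}(q),\mathrm{K}(q)]$ bijectively onto $[-\tfrac{\pi}{2},\tfrac{\pi}{2}]$, and since $\sin$ is strictly increasing there from $-1$ to $1$, the composition $\sn(\cdot,q)$ is strictly increasing on $[-\mathrm{K}(q),\mathrm{K}(q)]$ from $-1$ to $1$. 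I do not expect a genuine obstacle in this proof; the only point requiring a little care is verifying that the image of $\am(\cdot,q)$ on the indicated intervals is exactly $[0,\pi]$ respectively $[-\tfrac{\pi}{2},\tfrac{\pi}{2}]$, which is precisely what the two symmetries of $\mathrm{F}$ (oddness and $\pi$-quasi-periodicity) are used for.
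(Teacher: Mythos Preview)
Your argument is correct. The paper itself does not supply a proof of this proposition; it is stated in the appendix as a standard fact about Jacobian elliptic functions without justification. Your derivation from the oddness and $\pi$-quasi-periodicity of $\mathrm{F}(\cdot,q)$, passed through the inverse $\am(\cdot,q)$ and then composed with $\cos$ and $\sin$, is exactly the clean elementary route one would take if asked to fill in the details, and nothing in it is problematic.
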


We also collect some integral formulae used in this paper.
For $q\in(0,1)$, 
\begin{align}
    \int \cn(\xi,q)\;\mathrm{d}\xi=\frac{1}{q}\arcsin{\big(q\sn(x,q) \big)} +C, \label{eq:int-cn} \\
    \int \big(1-2q^2\sn(\xi,q)^2\big)\;\mathrm{d}\xi = 2\mathrm{E}(\am(x,q),q)-x +C, \label{eq:int-1-sn^2} \\
    \int \sn(\xi,q)\sqrt{1-q^2\sn(\xi,q)^2}\;\mathrm{d}\xi = -\cn(x,q) +C. \label{eq:int-sn(1-q^2sn^2)}
\end{align}

\section{Technical proofs}\label{sect:proof-fgeh}

\begin{proof}[Proof of \eqref{eq:SingleTermComparison}] 
   In this proof we use the shorthand notation $q_1 := q_1(\lambda \ell^2)$ and $q_2 := q_2(\lambda \ell^2)$. Recall that ${1}/{\sqrt{2}}< q_1 < q_2< q_*$. Also define $\alpha_i := \frac{\sqrt{\lambda}}{\sqrt{2} \sqrt{2q_i^2-1}}$ for $i= 1,2$, cf.\ \eqref{eq:lambda-Linner}. We first show the inequality $L[\gamma_{\rm sarc}^{\lambda,\ell,1}] <  L[\gamma_{\rm larc}^{\lambda,\ell,1}]$. To this end we infer from Definition \ref{def:sarc_larc_loop} that 
   \begin{align*}
       L[\gamma_{\rm sarc}^{\lambda,\ell,1}]  = \ell \frac{ \mathrm{K}(q_1)}{2\mathrm{E}(q_1)- \mathrm{K}(q_1)}= \ell \frac{ 1}{2\frac{\mathrm{E}(q_1)}{\mathrm{K}(q_1)}- 1}, \\
       L[\gamma_{\rm larc}^{\lambda,\ell,1}]  = \ell \frac{ \mathrm{K}(q_2)}{2\mathrm{E}(q_2)- \mathrm{K}(q_2)}= \ell \frac{ 1}{2\frac{\mathrm{E}(q_2)}{\mathrm{K}(q_2)}- 1}.
   \end{align*}
       Notice that by \cite[Lemma B.4]{MR23} the function $q \mapsto 2 \frac{\mathrm{E}(q)}{\mathrm{K}(q)} - 1$ is decreasing and positive on $(0,q^*)$. The desired inequality follows then immediately from the previous two identities. Now we turn to the inequality  $B[\gamma_{\rm sarc}^{\lambda,\ell,1}] > B[\gamma_{\rm larc}^{\lambda,\ell,1}]$. Proceeding as in \eqref{eq:computationOFB} one can find 
    \begin{equation*}
        B[\gamma_{\rm sarc}^{\lambda,\ell,1}] = 8 \alpha_1 (q_1^2 \mathrm{K}(q_1) - \mathrm{K}(q_1) + \mathrm{E}(q_1) )   = 2\sqrt{2}\sqrt{\lambda} \frac{q_1^2\mathrm{K}(q_1)- \mathrm{K}(q_1) + \mathrm{E}(q_1)}{\sqrt{2q_1^2-1}}.
    \end{equation*}
    Similarly one finds
    $
        B[\gamma_{\rm larc}^{\lambda,\ell,1}] = 2\sqrt{2}\sqrt{\lambda} \frac{q_2^2\mathrm{K}(q_2)- \mathrm{K}(q_2) + \mathrm{E}(q_2)}{\sqrt{2q_2^2-1}}.
$
    Hence it suffices to show $\eta: (\frac{1}{\sqrt{2}},q_*) \rightarrow \mathbf{R}, \eta(q):= \frac{q^2\mathrm{K}(q)-\mathrm{K}(q)+\mathrm{E}(q)}{\sqrt{2q^2-1}}$ is decreasing on $(\frac{1}{\sqrt{2}},q_*)$.
    This is easily seen by computing with the aid of \eqref{eq:Ydcds_Lem2.6} that 
    \begin{align*}
        \eta'(q) & = \frac{q\mathrm{K}(q)}{\sqrt{2q^2-1}} - \frac{2q (q^2\mathrm{K}(q)-\mathrm{K}(q)+\mathrm{E}(q))}{(2q^2-1)^\frac{3}{2}}  \\ & = \frac{q\mathrm{K}(q)-2q\mathrm{E}(q)}{(2q^2-1)^\frac{3}{2}} = - \frac{q}{(2q^2-1)^\frac{3}{2}} (2\mathrm{E}(q)-\mathrm{K}(q)) < 0 .\qedhere
    \end{align*}

    \end{proof}

\begin{proof}[Proof of Lemma~\ref{lem:property-f}]
We first check that $f(\frac{1}{\sqrt{2}})>0$.
Indeed, by Lemma~\ref{lem:elliptic_2E-K} it follows that 
\begin{align}\label{eq:f-positive}
    f(\tfrac{1}{\sqrt{2}}) &= -\frac{1}{2} \mathrm{K}(\tfrac{1}{\sqrt{2}})+ \mathrm{E}(\tfrac{1}{\sqrt{2}}) = \frac{1}{2} \big(2\mathrm{E}(\tfrac{1}{\sqrt{2}}) - \mathrm{K}(\tfrac{1}{\sqrt{2}}) \big) >0.
\end{align}
Next we show that $f(q)<0$ if $q\in [q_*,1)$. 
Setting
\[
a(q):=4q^4-5q^2+1 = (4q^2-1)(q^2-1), \quad b(q):=-8q^4+8q^2-1, \quad q\in [\tfrac{1}{\sqrt{2}},1], 
\]
we can rewrite $f(q)=a(q)\mathrm{K}(q)+b(q)\mathrm{E}(q)$. 
By Lemma~\ref{lem:elliptic_2E-K}, $2\mathrm{E}(q)<\mathrm{K}(q)$ holds for $q\in [q_*,1)$. 
This together with the fact that $a$ is negative on $[\tfrac{1}{\sqrt{2}},1]$ implies that, for $q\in[q_*,1)$, 
\begin{align*}
f(q) = a(q)\mathrm{K}(q)+b(q)\mathrm{E}(q) < 2a(q)\mathrm{E}(q)+b(q)\mathrm{E}(q) = -(2q^2-1)\mathrm{E}(q). 
\end{align*}
Therefore we obtain 
\begin{align} \label{eq:f-negative}
    f(q)<0 \ \  \text{for} \ \ q\in [q_*,1).
\end{align}  

By continuity of $f$, we have already obtained existence of $\hat{q}\in(\frac{1}{\sqrt{2}}, q_*)$ satisfying \eqref{eq:hat_q-def}. 
It remains to show uniqueness. 
To this end, we calculate 
\begin{align*}
    f'(q)&=(16q^3-10q) \mathrm{K}(q) +(4q^4-5q^2+1)\bigg( \frac{1}{q(1-q^2)}\mathrm{E}(q) -\frac{1}{q}\mathrm{K}(q) \bigg) \\
    &\quad+ (-32q^3+16q)\mathrm{E}(q)+(-8q^4+8q^2-1)\bigg( \frac{1}{q}\mathrm{E}(q) -\frac{1}{q}\mathrm{K}(q) \bigg) \\
    &=(20q^3-13q)\mathrm{K}(q)+20q(-2q^2+1)\mathrm{E}(q).
\end{align*}
By Lemma~\ref{lem:elliptic_2E-K}, $2\mathrm{E}(q)>\mathrm{K}(q)$ holds for $q\in(\frac{1}{2},q_*)$, and hence 
\begin{align}\label{eq:f-diff_estimate}
\begin{split}
    f'(q) < \; (20q^3-13q) \mathrm{K}(q) + 10q(-2q^2+1)\mathrm{K}(q) 
    =-3q\mathrm{K}(q)<0, \quad q\in(\tfrac{1}{\sqrt{2}},q_*).
    \end{split}
\end{align}
Thus $f$ is strictly decreasing on $(\frac{1}{\sqrt{2}}, q_*)$, which yields uniqueness of $\hat{q}\in(\frac{1}{\sqrt{2}}, q_*)$ satisfying \eqref{eq:hat_q-def}.
Combining continuity of $f$ with \eqref{eq:f-negative} and \eqref{eq:f-positive}, we see that $f>0$ on $[\frac{1}{\sqrt{2}}, \hat{q})$ and $f<0$ on $(\hat{q},1)$.
\end{proof}

\if0
\begin{proof}[Proof of Lemma~\ref{lem:hat-q-character}]
[OLD DRAFT]
Throughout this proof we write $m_1:=(\frac{3}{5})^{\frac{1}{2}}$ and $m_2:=(\frac{2}{3})^{\frac{1}{2}}$ for short and in the following we show $m_1<\hat{q}<m_2$.
By definition of $f$ it follows that 
\begin{align*}
f(m_1)
&= -\frac{14}{25}\mathrm{K}(m_1) + \frac{23}{25}\mathrm{E}(m_1) \\
&= -\frac{14}{25}\int_0^{\frac{\pi}{2}}\Big(1-\frac{23}{14}+\frac{23}{14}\cdot\frac{3}{5}\sin^2\theta\Big)\frac{1}{\sqrt{1-\frac{3}{5}\sin^2\theta}}\;\mathrm{d}\theta \\
&>-\frac{14}{25}\int_0^{\frac{\pi}{2}}\Big(-\frac{7}{14}+\sin^2\theta\Big)\frac{1}{\sqrt{1-\frac{3}{5}\sin^2\theta}} \;\mathrm{d}\theta>0, 
\end{align*}
where in the last inequality we used the fact that, for any $q\in(0,1)$,
\begin{align}
\begin{split}
    \int_0^{\frac{\pi}{2}}\frac{1-2\sin^2\theta}{\sqrt{1-q^2\sin^2\theta}} \;\mathrm{d}\theta
    &=\int_0^{\frac{\pi}{2}}\frac{\cos{2\theta}}{\sqrt{1-q^2\sin^2\theta}} \;\mathrm{d}\theta \\
    &=\int_0^{\frac{\pi}{4}}\frac{\cos{2\theta}}{\sqrt{1-q^2\sin^2\theta}} \;\mathrm{d}\theta - \int_0^{\frac{\pi}{4}}\frac{\cos{2\theta}}{\sqrt{1-q^2\cos^2\theta}} \;\mathrm{d}\theta \\
    &<0.
\end{split}
\end{align} 
Thus we get $f(m_1)>0$ and this together with Lemma~\ref{lem:property-f} yields $m_1<\hat{q}$.
\\
\indent
On the other hand, since \eqref{eq:tech-Elliptic_int} yields that 
\begin{align*}
\mathrm{E}(m_2) -\frac{2}{3}\mathrm{K}(m_2) = \frac{1}{3}\int_0^{\frac{\pi}{2}}\frac{1-2\sin^2\theta}{\sqrt{1-\frac{2}{3}\sin^2\theta}} \;\mathrm{d}\theta <0,
\end{align*}
we have $
f(m_2) = -\frac{5}{9}\mathrm{K}(m_2) + \frac{7}{9}\mathrm{E}(m_2) 
<-\frac{5}{9}\mathrm{K}(m_2) +\frac{14}{27}\mathrm{K}(m_2) <0$. 
This together with Lemma~\ref{lem:property-f} gives $\hat{q}<m_2$ and the proof is complete.
\end{proof}
\fi

\begin{proof}[Proof of Lemma~\ref{lem:hat-q-character}]
Throughout this proof we write $m_1:=(\frac{3}{5})^{\frac{1}{2}}$ and $m_2:=(\frac{2}{3})^{\frac{1}{2}}$ for short. 
First we show $m_1<\hat{q}$.
In view of Lemma~\ref{lem:property-f}, it suffices to show $f(m_1)>0$.
To this end, here we adopt ideas from \cite[Proof of Lemma 3.10]{Miura23}, to use the known series expansions of $\mathrm{K}$ and $\mathrm{E}$ (e.g.\ founded in \cite[17.3.11, 17.3.2]{Abramowitz1992}) that are absolutely convergent 
\begin{align*}
    \mathrm{K}(q)&=\int_0^{\frac{\pi}{2}}\frac{1}{\sqrt{1-q^2\sin^2\theta }} \; \mathrm{d}\theta = \frac{\pi}{2}\sum_{n=0}^\infty\bigg(\frac{(2n-1)!!}{(2n)!!}\bigg)^2q^{2n}, \\
    \mathrm{E}(q)&= \int_0^{\frac{\pi}{2}} \sqrt{1-q^2\sin^2\theta} \; \mathrm{d}\theta = \frac{\pi}{2}\sum_{n=0}^\infty\bigg(\frac{(2n-1)!!}{(2n)!!}\bigg)^2\frac{1}{1-2n}q^{2n}. 
\end{align*}
We deduce from these expansions that
\begin{align*}
f(m_1)&=-\frac{14}{25}\mathrm{K}(m_1) + \frac{23}{25}\mathrm{E}(m_1)\\
&=\frac{9\pi}{50}\bigg[1-\sum_{n=1}^\infty A_n \Big(\frac{3}{5}\Big)^n \bigg], \quad \text{where} \quad A_n:=\bigg(\frac{(2n-1)!!}{(2n)!!}\bigg)^2\frac{28n+9}{9(2n-1)}.
\end{align*}
Since $A_n\leq A_1=\frac{37}{36}$ holds by induction, it follows that for all $N\in\mathbf{N}$
\begin{align*}
\sum_{n=1}^\infty A_n \Big(\frac{3}{5}\Big)^n &\leq \sum_{n=1}^N A_n \Big(\frac{3}{5}\Big)^n + \sum_{n=N+1}^\infty\frac{37}{36}\Big(\frac{3}{5}\Big)^n \\
&=\sum_{n=1}^N A_n \Big(\frac{3}{5}\Big)^n + \frac{185}{72}\Big(\frac{3}{5}\Big)^{N+1}=:T_N.
\end{align*}
An explicit computation (of finite operations multiplying integers) shows that 
\[
T_5=\frac{570955201}{614400000}<1, 
\]
and hence we obtain $f(m_1)\geq \frac{9\pi}{50}(1-T_5)>0$. 
\\
\indent
Now we prove $\hat{q}<m_2$ by showing that $f(m_2)<0$. 
Noting that for any $q\in(0,1)$
\begin{align}\label{eq:tech-Elliptic_int}
\begin{split}
    \int_0^{\frac{\pi}{2}}\frac{1-2\sin^2\theta}{\sqrt{1-q^2\sin^2\theta}} \;\mathrm{d}\theta
    &=\int_0^{\frac{\pi}{2}}\frac{\cos{2\theta}}{\sqrt{1-q^2\sin^2\theta}} \;\mathrm{d}\theta \\
    &=\int_0^{\frac{\pi}{4}}\frac{\cos{2\theta}}{\sqrt{1-q^2\sin^2\theta}} \;\mathrm{d}\theta - \int_0^{\frac{\pi}{4}}\frac{\cos{2\theta}}{\sqrt{1-q^2\cos^2\theta}} \;\mathrm{d}\theta \\
    &<0, 
\end{split}
\end{align} 
we obtain
\begin{align*}
\mathrm{E}(m_2) -\frac{2}{3}\mathrm{K}(m_2) = \frac{1}{3}\int_0^{\frac{\pi}{2}}\frac{1-2\sin^2\theta}{\sqrt{1-\frac{2}{3}\sin^2\theta}} \;\mathrm{d}\theta <0. 
\end{align*}
This yields $
f(m_2) = -\frac{5}{9}\mathrm{K}(m_2) + \frac{7}{9}\mathrm{E}(m_2) 
<-\frac{5}{9}\mathrm{K}(m_2) +\frac{14}{27}\mathrm{K}(m_2) <0$. 
\end{proof}

\begin{proof}[Proof of Lemma~\ref{lem:property-e}]
Set $\zeta(q):=(4q^2-3)\mathrm{K}(q)+2\mathrm{E}(q)$ and $\xi(q):=(2q^2-1)(2\mathrm{E}(q)-\mathrm{K}(q))$ (so that $e=\zeta/\xi$). 
Then, in view of \eqref{eq:diff-elliptic-int}, we see that 
\begin{align}
\begin{split} \label{eq:diff-zeta}
\zeta'(q)
&=8q\mathrm{K}(q) + (4q^2-3)\Big( \frac{\mathrm{E}(q)}{q(1-q^2)} - \frac{\mathrm{K}(q)}{q} \Big)+2\Big(\frac{\mathrm{E}(q)}{q} - \frac{\mathrm{K}(q)}{q}\Big)\\
&=\frac{4q^2+1}{q}\mathrm{K}(q)+\frac{2q^2-1}{q(1-q^2)}\mathrm{E}(q).
\end{split}
\end{align}
Similarly it follows that 
\begin{align*}
\xi'(q)
&=4q(2\mathrm{E}(q)-\mathrm{K}(q)) +(2q^2-1) \bigg[2\Big(\frac{\mathrm{E}(q)}{q} - \frac{\mathrm{K}(q)}{q}\Big) + \Big( \frac{\mathrm{E}(q)}{q(1-q^2)} - \frac{\mathrm{K}(q)}{q} \Big) \bigg] \\
&=\frac{-6q^2+1}{q}\mathrm{K}(q)+\frac{-12q^4+12q^2-1}{q(1-q^2)}\mathrm{E}(q).
\end{align*}
A straightforward computation shows that 
\begin{align*}
\zeta'(q)\xi(q) 
&=(2q^2-1) \bigg( -\frac{4q^2+1}{q}\mathrm{K}(q)^2 +2\frac{2q^2-1}{q(1-q^2)}\mathrm{E}(q)^2 \\
& \hspace{133pt} +\frac{-8q^4+4q^2+3}{q(1-q^2)}\mathrm{K}(q)\mathrm{E}(q) \bigg), \\
\zeta(q)\xi'(q) 
&= \frac{(4q^2-3)(-6q^2+1)}{q}\mathrm{K}(q)^2 +2\frac{-12q^4 +12q^2-1}{q(1-q^2)}\mathrm{E}(q)^2 \\
& \hspace{129pt} + \frac{-48q^6+96q^4 -54q^2+5}{q(1-q^2)}\mathrm{K}(q)\mathrm{E}(q).
\end{align*}
Therefore, we obtain 
\begin{align*}
&\zeta'(q)\xi(q) - \zeta(q)\xi'(q) \\
=&\ \frac{16q^4-20q^2+4}{q}\mathrm{K}(q)^2 +\frac{32q^4-32q^2+4}{q(1-q^2)}\mathrm{E}(q)^2 \\
&\quad\quad\quad + \frac{32q^6-80q^4+56q^2-8}{q(1-q^2)}\mathrm{K}(q)\mathrm{E}(q) \\
=&\ 4\Big((4q^4-5q^2+1) \mathrm{K}(q) +(-8q^4+8q^2-1)\mathrm{E}(q) \Big)\Big(\frac{1}{q}\mathrm{K}(q) - \frac{1}{q(1-q^2)}\mathrm{E}(q) \Big) \\
=&\ -4f(q)\mathrm{K}'(q),
\end{align*}
where we used \eqref{eq:def-f} and \eqref{eq:diff-elliptic-int} in the last equality. 
For $q\in(\frac{1}{\sqrt{2}}, \hat{q})$, we obtain \eqref{eq:diff-e} since $e'=(\zeta'\xi-\zeta \xi')/\xi^2$.
On the other hand, for $q\in(\hat{q}, 1)$, noting $e'=-(\zeta'\xi-\zeta \xi')/\xi^2$ and the fact that $2\mathrm{E}(q)-\mathrm{K}(q)<0$, we see that \eqref{eq:diff-e} holds as well. 

In addition, we can deduce \eqref{eq:monotonicity-e} from \eqref{eq:diff-e} combined with Lemma~\ref{lem:property-f}.
\end{proof}

\begin{proof}[Proof of Lemma~\ref{lem:property-h}]
Let $\zeta(q):=(4q^2-3)\mathrm{K}(q)+2\mathrm{E}(q)$ as in the proof of Lemma~\ref{lem:property-e}. 
Then, due to the fact that $h(q)=\frac{\zeta(q)}{\sqrt{2q^2-1}}$ and by \eqref{eq:diff-zeta} we have 
\begin{align*}
    h'(q)&= \frac{1}{(2q^2-1)^{\frac{3}{2}}} \big(-2q\zeta(q)+(2q^2-1)\zeta'(q) \big) \\
    &= \frac{1}{(2q^2-1)^{\frac{3}{2}}} \bigg(-2q\Big((4q^2-3)\mathrm{K}(q)+2\mathrm{E}(q)\Big) \\
    &\quad\quad\quad\quad\quad\quad\quad +(2q^2-1)\Big(\frac{4q^2+1}{q}\mathrm{K}(q)+\frac{2q^2-1}{q(1-q^2)}\mathrm{E}(q) \Big) \bigg) \\
    &= \frac{1}{(2q^2-1)^{\frac{3}{2}}} \bigg( \Big(4q-\frac{1}{q}\Big)\mathrm{K}(q) + \frac{8q^4-8q^2+1}{q(1-q^2)} \mathrm{E}(q) \bigg) \\
    &=- \frac{1}{(2q^2-1)^{\frac{3}{2}}q(1-q^2)}f(q), 
\end{align*}
which completes the proof. 
\end{proof}

\bibliographystyle{abbrv}
\bibliography{ref_Mueller-Yoshizawa}

\end{document}